\theoremstyle{plain}
\newtheorem{theorem}{Theorem}[section]
\newtheorem*{theorem*}{Theorem}
\newtheorem{proposition}[theorem]{Proposition}
\newtheorem{corollary}[theorem]{Corollary}
\newtheorem{lemma}[theorem]{Lemma}
\theoremstyle{definition}
\newtheorem{definition}[theorem]{Definition}
\newtheorem{remark}[theorem]{Remark}
\newcommand{\enm}[1]{\ensuremath{#1}}          %
\newcommand{\op}[1]{\operatorname{#1}}
\newcommand{\cal}[1]{\mathcal{#1}}
\newcommand{\CC}{\enm{\mathbb{C}}}
\newcommand{\EE}{\enm{\mathbb{E}}}
\newcommand{\QQ}{\enm{\mathbb{Q}}}
\newcommand{\ZZ}{\enm{\mathbb{Z}}}
\newcommand{\FF}{\enm{\mathbb{F}}}
\renewcommand{\AA}{\enm{\mathbb{A}}}
\newcommand{\PP}{\enm{\mathbb{P}}}
\newcommand{\Aa}{\enm{\cal{A}}}
\newcommand{\Bb}{\enm{\cal{B}}}
\newcommand{\Ee}{\enm{\cal{E}}}
\newcommand{\Ff}{\enm{\cal{F}}}
\newcommand{\Gg}{\enm{\cal{G}}}
\newcommand{\Hh}{\enm{\cal{H}}}
\newcommand{\Ii}{\enm{\cal{I}}}
\newcommand{\Ll}{\enm{\cal{L}}}
\newcommand{\Oo}{\enm{\cal{O}}}
\newcommand{\Vv}{\enm{\cal{V}}}
\renewcommand{\phi}{\varphi}
\renewcommand{\theta}{\vartheta}
\renewcommand{\epsilon}{\varepsilon}
\newcommand{\Hom}{\op{Hom}}
\newcommand{\Ext}{\op{Ext}}
\newcommand{\End}{\op{End}}
\renewcommand{\to}[1][]{\xrightarrow{\ #1\ }}
\newcommand{\old}[1]{}
\begin{document}

\title[Representation type of surfaces in $\mathbb{P}^3$]{Representation type of surfaces in $\mathbb{P}^3$}

\author{Edoardo Ballico and Sukmoon Huh}

\address{Universit\`a di Trento, 38123 Povo (TN), Italy}
\email{edoardo.ballico@unitn.it}

\address{Sungkyunkwan University, Suwon 440-746, Korea}
\email{sukmoonh@skku.edu}

\keywords{Arithmetically Cohen-Macaulay sheaf, Representation type, Surface}
\thanks{The first author is partially supported by GNSAGA of INDAM (Italy) and MIUR PRIN 2015 \lq Geometria delle variet\`a algebriche\rq. The second author is supported by the National Research Foundation of Korea(NRF) grant funded by the Korea government(MSIT) (No. 2018R1C1A6004285 and No. 2016R1A5A1008055)}

\subjclass[2010]{Primary: {14F05}; Secondary: {13C14, 16G60}}

\begin{abstract}
The goal of this article is to prove that every surface with a regular point in the three-dimensional projective space of degree at least four, is of wild representation type under the condition that either $X$ is integral or $\mathrm{Pic}(X) \cong \langle \Oo_X(1) \rangle$; we construct families of arbitrarily large dimension of indecomposable pairwise non-isomorphic aCM vector bundles. On the other hand, we prove that every non-integral aCM scheme of arbitrary dimension at least two, is also very wild in a sense that there exist arbitrarily large dimensional families of pairwise non-isomorphic aCM non-locally free sheaves of rank one. 
\end{abstract}

\maketitle


\section{Introduction}
An arithmetically Cohen-Macaulay (for short, aCM) sheaf on a projective scheme $X$ is a coherent sheaf supporting $X$, which has trivial intermediate cohomology and the stalk at each point whose depth equals the dimension of $X$. ACM vector bundles correspond to maximal Cohen-Macaulay modules over the associated graded ring and they reflect the properties of the graded ring. It is believed that the category generated by aCM sheaves on $X$ measures the complexity of $X$. Indeed, a classification of aCM varieties was proposed as {\it finite, tame or wild} representation type according to the complexity of this category in \cite{DG} and there are several contributions to this trichotomy such as \cite{EH,BGS,CMP,FM}. It is only recent when such a representation type is determined for each aCM variety that is not a cone; see \cite{FP}. 

In this article, we pay our attention to the representation type of surfaces in three-dimensional projective space. Since the aCM vector bundles on smooth surfaces of degree at most two are completely classified due to the work by Horrocks and \cite{Kapranov, Knorrer}, we may focus on surfaces of degree at least three. The case of cubic surfaces is dealt in \cite{CH, faenzi} and the case of quartic surfaces is from \cite{madonna}. Our main result is the following, which implies that the surfaces in Theorem \ref{thth} are of wild representation type. 

\begin{theorem}\label{thth}
Let $X\subset \PP^3$ be a surface of degree at least four with $X_{\mathrm{reg}} \ne \emptyset$ and assume either $\mathrm{Pic}(X) =\ZZ\langle \Oo_X(1)\rangle$ or that $X$ is integral. For every even and positive integer $r$, there exists a family $\{\Ee _\lambda\}_{\lambda \in \Lambda}$ of indecomposable aCM vector bundles of rank $r$ such that $\Lambda$ is an integral quasi-projective variety with $\dim \Lambda=r$ and $\Ee _{\lambda}\ncong \Ee _{\lambda'}$ for all $\lambda \ne \lambda'$ in $\Lambda$.
\end{theorem}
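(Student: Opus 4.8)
The plan is to build the high-dimensional families of indecomposable aCM bundles by extensions of rank-one aCM sheaves, following the standard Atiyah-type strategy adapted to singular surfaces. First I would fix a regular point $p \in X_{\mathrm{reg}}$ and, using the hypothesis on $\Pic(X)$ or integrality, produce a supply of aCM line bundles (or rank-one torsion-free aCM sheaves) on $X$ with controlled cohomology — for instance twists $\Oo_X(t)$ together with ideal-sheaf-type modifications $\Ii_{Z,X}(t)$ for suitable zero-dimensional or codimension-one subschemes $Z$ supported near $p$. The key numerical input is that on a surface of degree $d \ge 4$ in $\PP^3$ one has enough room in $\Ext^1$ between such rank-one aCM sheaves: a Riemann–Roch / cohomology computation on $X$ should give $\dim \Ext^1(\Ll', \Ll) $ growing without bound as we vary the twists, while $\Hom(\Ll',\Ll)$ stays small (ideally $\Ll \ncong \Ll'$ forces it to be $0$ or a line). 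This is where the degree $\ge 4$ hypothesis enters decisively; for $d \le 3$ the relevant $\Ext$ groups are too small.

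Next I would assemble a rank-$r$ bundle $\Ee$ as an iterated extension
\[
0 \to \Ll \to \Ee \to F \to 0,
\]
where $F$ is a rank-$(r-1)$ aCM bundle already constructed and the extension class is chosen generically in a large linear subspace of $\Ext^1(F,\Ll)$. Since extensions of aCM sheaves with no intermediate cohomology are again aCM (the long exact sequence in cohomology kills $H^1_*$ and preserves depth since $X$ is CM of dimension $2$ and $F$, $\Ll$ are locally of depth $2$ on $X_{\mathrm{reg}}$ — here I would need to check that the construction can be arranged so that $\Ee$ is actually locally free, not merely reflexive, which is automatic if $\Ll$ and $F$ are locally free and the extension is locally split, or by choosing $Z$ in the complement of the singular locus), $\Ee$ is aCM and locally free. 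Parametrizing the extension classes modulo the action of $\Aut(F) \times \Aut(\Ll)$ and the scalars gives the family $\{\Ee_\lambda\}$ over an integral quasi-projective base $\Lambda$; a dimension count of $\Ext^1$ minus the automorphism contributions yields $\dim \Lambda = r$ after choosing the twists appropriately. Restricting along a jumping locus or using the semicontinuity of $\hom(\Ee_\lambda,\Ee_{\lambda'})$ and a tangent-space argument, together with the simplicity of the constituents, shows the $\Ee_\lambda$ are pairwise non-isomorphic for generic $\lambda$.

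The remaining point is indecomposability. The plan is the classical one: if $\Ee_\lambda \cong \Aa \oplus \Bb$ nontrivially, restrict to the surjection $\Ee_\lambda \to F$ and the injection $\Ll \to \Ee_\lambda$ and use that $\Ll$ is simple and does not split off (because the extension class is nonzero) to derive a contradiction — more precisely, analyze the composition $\Ll \to \Ee_\lambda \to \Aa$ and $\Ll \to \Ee_\lambda \to \Bb$, show one must vanish, and then inductively reduce to indecomposability of $F$, having arranged the extension classes so that no idempotent of $\End(\Ee_\lambda)$ other than $0,1$ survives. Concretely this requires computing $\End(\Ee_\lambda)$ is local, which follows if $\hom(\Ee_\lambda,\Ee_\lambda) = 1 + (\text{nilpotents})$; I would establish this via the exact sequences coming from the iterated extension and a genericity argument on the classes in $\bigoplus_i \Ext^1(\Ll_i, \Ll)$.

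**Main obstacle.** The hardest part is controlling the local structure at the singular locus of $X$: ensuring that the rank-one building blocks and their extensions are genuinely locally free (hence vector bundles, as the statement demands) rather than just torsion-free or reflexive, and that depth $= 2$ holds at every point including the singular ones. Under $\Pic(X) = \ZZ\langle\Oo_X(1)\rangle$ one has very few line bundles, so the flexibility must come entirely from the cohomology of twists and from subschemes $Z \subset X_{\mathrm{reg}}$; verifying that extensions by $\Ii_{Z,X}(t)$-type sheaves remain locally free away from $Z$ and can be patched to honest bundles — and that the $\Ext$ dimension count is uniform in the singularities of $X$ — is the delicate technical heart of the argument.
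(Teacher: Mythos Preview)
Your proposal has the right general shape but contains a gap that prevents it from starting, and it misses the paper's two key ideas. First, the base case: under $\mathrm{Pic}(X)=\ZZ\langle\Oo_X(1)\rangle$ the only line bundles are $\Oo_X(t)$, and since $X$ is aCM one has $\Ext^1_X(\Oo_X(a),\Oo_X(b))=H^1(\Oo_X(b-a))=0$ for all $a,b$; so there are \emph{no} nontrivial extensions between line bundles, and your iterated rank-one extension cannot get off the ground. The paper resolves exactly the local-freeness issue you flag via Cayley--Bacharach: for $p\in X_{\mathrm{reg}}$ the unique nontrivial extension $0\to\Oo_X(m-3)\to\Ee_p\to\Ii_{p,X}\to0$ is locally free (Proposition~\ref{a1.1}). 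It then jumps directly to rank $2s$ via a single extension $0\to\Oo_X(m-3)^{\oplus s}\to\Ee_S\to\bigoplus_{p\in S}\Ii_{p,X}\to0$ for a set $S$ of $s$ regular points, obtaining local freeness and aCM-ness of the general extension by semicontinuity from the split case $\bigoplus_{p\in S}\Ee_p$. Crucially, the $r=2s$-dimensional parameter space is \emph{not} carved out of $\Ext^1$ modulo automorphisms as you propose (your dimension count is never made precise and there is no visible reason it should equal $r$); it is simply the configuration space of $s$ points in the two-dimensional $X_{\mathrm{reg}}$, and $\Ee_S\ncong\Ee_{S'}$ for $S\neq S'$ follows because the extension above is the Harder--Narasimhan filtration of $\Ee_S$, so $S$ is recovered from the quotient.

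Second, and more seriously, indecomposability is where essentially all the work lies (the entire Section~\ref{sec4}), and your sketch does not close. The paper first shows (Lemma~\ref{x1}) that any decomposition of $\Ee_S$ into indecomposables induces a partition of $S$, again via the HN filtration; induction on $s$ then reduces to ruling out a two-block partition $S=A\sqcup(S\setminus A)$ for general $S$ and general extension class. The decisive idea, absent from your outline, is a \emph{monodromy argument}: if for general $S$ the general $\Ee_S$ always split along such a partition, one moves a point $p\in A$ along a path in the connected component $Y\cap X_{\mathrm{reg}}$ to a point $q\in S\setminus A$ (and simultaneously $q$ to $p$), forcing two distinct partitions of the same $S$ to arise in one irreducible family---contradicting Krull--Schmidt uniqueness. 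This is the real content of Theorem~\ref{i1} and is why $S$ must lie in a single connected component of $X_{\mathrm{reg}}$; your generic analysis of idempotents in $\End(\Ee_\lambda)$ does not supply a substitute for it.
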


It has to be noticed that although the result in \cite{FP} is more general than the implication of Theorem \ref{thth} regarding the wildness of the representation type, Theorem \ref{thth} provides a concrete way of constructing families of indecomposable aCM `vector bundles' with prescribed rank, even on singular surfaces. 

On the other hand, every non-integral aCM projective schemes of arbitrary dimension at least two is of `very wild' representation type, in a sense that there exist arbitrarily large dimensional families of pairwise non-isomorphic aCM non-locally free sheaves of rank one; see Proposition \ref{zz3}. 

Here we summarize the structure of this article. In Section \ref{prem} we collect several definitions and basic results that are used throughout the article. In Section \ref{sec3} we state the main result in Theorem \ref{i1}, which would automatically imply Theorem \ref{thth}. We also give a proof of Theorem \ref{i1} in special case and suggest a number of its variation to construct aCM vector bundles. Then we spend the whole Section \ref{sec4} for the proof of Theorem \ref{i1}; basically we use induction on rank and the main ingredient for the proof is Lemma \ref{x1} and the use of monodromy argument. Then we show in Section \ref{sec5} the wildness of any aCM projective scheme of dimension at least two by investigating non-locally free ideal sheaves. 


\section{Preliminary}\label{prem}
Throughout the article our base field $\mathbf{k}$ is algebraically closed of characteristic $0$. We always assume that our projective schemes $X\subset \PP^N$ are arithmetically Cohen-Macaulay, namely, $h^1(\Ii _{X, \PP^N}(t)) =0$ for all $t\in \ZZ$ and $h^i(\Oo _X(t)) =0$ for all $t\in \ZZ$ and all $i=1,\dots ,\dim X-1$, of pure dimension at least two. Then by \cite[Th\'eor\`eme 1 in page 268]{serre} all local rings $\Oo _{X,x}$ are Cohen-Macaulay of dimension $\dim X$. From $h^1(\Ii _{X, \PP^N})=0$ we see that $X_{\mathrm{red}}$ is connected. Since in all our results we have $N =\dim X+1=3$, the reader may just assume that $X$ is a surface in $\PP^3$. For a vector bundle $\Ee$ of rank $r\in \ZZ$ on $X$, we say that $\Ee$ {\it splits} if all its indecomposable factors are $\Oo_X(t)$ for some $t\in \ZZ$; $\Ee \cong \oplus_{i=1}^r \Oo_X(t_i)$ for some $t_i\in \ZZ$ with $i=1,\ldots, r$.

We always fix the embedding $X\subset \PP^N$ and the associated polarization $\Oo_X(1)$. For a coherent sheaf $\Ee$ on a closed subscheme $X$ of a fixed projective space, we denote $\Ee \otimes \Oo_X(t)$ by $\Ee(t)$ for $t\in \ZZ$. For another coherent sheaf $\Gg$, we denote by $\mathrm{hom}_X(\Ff, \Gg)$ the dimension of $\Hom_X(\Ff, \Gg)$, and by $\mathrm{ext}_X^i(\Ff, \Gg)$ the dimension of $\mathrm{Ext}_X^i (\Ff, \Gg)$. Finally we denote the canonical sheaf of $X$ by $\omega_X$. 

\begin{definition}\label{deff}
A coherent sheaf $\Ee$ on $X$ is called {\it arithmetically Cohen-Macaulay} (for short, aCM) if the following hold:
\begin{itemize}
\item [(i)] $\Ee$ is locally Cohen-Macaulay, that is, the stalk $\Ee_x$ has depth equal to $\dim \Oo_{X,x}$ for any point $x$ on $X$, and
\item [(ii)] $H^i(\Ee(t))=0$ for all $t\in \ZZ$ and $i=1, \ldots, \dim (X)-1$.
\end{itemize}
\end{definition}

\begin{remark}\label{rremm}
In the condition (i) of Definition \ref{deff}, we may only require that the stalk $\Ee_x$ has positive depth for any point $x\in X$; see \cite[Remark 2.2]{BHMP} and \cite[Th\'eor\`eme 1 in page 268]{serre}. 
\end{remark}

If $\Ee$ is a coherent sheaf on a closed subscheme $X$ of a fixed projective space, then we may consider its Hilbert polynomial $\mathrm{P}_{\Ee}(t)\in \QQ [t]$ with the leading coefficient $\mu(\Ee)/d!$, where $d$ is the dimension of $\mathrm{Supp}(\Ee)$ and $\mu=\mu(\Ee)$ is called the {\it multiplicity} of $\Ee$. The {\it normalized} Hilbert polynomial $p_{\Ee}(t)$ of $\Ee$ is defined to be the Hilbert polynomial of $\Ee$ divided by $\mu (\Ee)$.

\begin{definition}\label{def}
If $\dim \mathrm{Supp}(\Ee)=\dim (X)$, then the {\it rank} of $\Ee$ is defined to be
$$\mathrm{rank}(\Ee)=\frac{\mu(\Ee)}{\mu(\Oo_X)}.$$
Otherwise it is defined to be zero.
\end{definition}
For an integral scheme $X$, the rank of $\Ee$ is the dimension of the stalk $\Ee_x$ at the generic point $x\in X$. But in general $\mathrm{rank}(\Ee)$ needs not be integer.

\begin{lemma}\label{z1}
Let $(X, \Oo_X(1))$ be an aCM projective scheme of dimension $n\ge 2$. For a fixed coherent sheaf $\Gg$ with pure depth $n$ on $X$, assume the existence of $t_0\in \ZZ$ such that $s:= h^1(\Gg (t_0))> 0$. Then the vector space $W:= H^1(\Gg (t_0))$ induces the following unique extension up to isomorphisms
\begin{equation}\label{eqz1}
0 \to \Gg \to \Ee \to \Oo _X(-t_0)\otimes W^\vee \to 0
\end{equation}
and the sheaf $\Ee$ in the middle satisfies the following:
\begin{itemize}
\item [(i)] $h^1(\Ee (t)) = h^1(\Gg (t))$ for all $t\ne t_0$, and $h^1(\Ee (t_0)) =0$;
\item [(ii)] $h^i(\Ee (t)) =h^i(\Gg (t))$ for all $t\in \ZZ$ and all $i$ with $2\le i \le n-1$.
\end{itemize}
If $\Gg$ is locally free, then $\Ee$ is locally free.
\end{lemma}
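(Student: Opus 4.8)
The plan is to produce $\Ee$ as the universal (Serre-type) extension that kills the group $W=H^1(\Gg(t_0))$, and then to read off its intermediate cohomology from the long exact sequences attached to the twists of \eqref{eqz1}.

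First I would pin down the relevant extension group. Since $\Oo_X(-t_0)\otimes W^\vee$ is locally free, one has $\Ext^i_X(\Oo_X(-t_0)\otimes W^\vee,\Gg)\cong H^i(X,\Gg(t_0))\otimes_{\mathbf{k}}W$ for every $i$, so
$$\Ext^1_X(\Oo_X(-t_0)\otimes W^\vee,\Gg)\;\cong\;W\otimes_{\mathbf{k}}W\;\cong\;\Hom_{\mathbf{k}}(W^\vee,W).$$
Because $\dim_{\mathbf{k}}W^\vee=\dim_{\mathbf{k}}W=s$, this space contains invertible elements, and I take \eqref{eqz1} to be the extension associated with one of them. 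For the uniqueness claim, if $e$ and $e'$ are two such classes then $e'=e\circ\beta$ with $\beta:=e^{-1}e'\in GL(W^\vee)=\Aut_{\Oo_X}(\Oo_X(-t_0)\otimes W^\vee)$ (using $\End_{\Oo_X}(\Oo_X(-t_0))=\mathbf{k}$, as $X$ is aCM and connected), so the two extensions have isomorphic middle term. The last sentence of the lemma is then immediate: if $\Gg$ is locally free, $\Ee$ is an extension of the locally free sheaf $\Oo_X(-t_0)\otimes W^\vee$ by the locally free sheaf $\Gg$, hence locally free.

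For the cohomology I would twist \eqref{eqz1} by an arbitrary $t\in\ZZ$, obtaining $0\to\Gg(t)\to\Ee(t)\to\Oo_X(t-t_0)\otimes W^\vee\to 0$, and feed into its long exact sequence two consequences of the aCM hypothesis on $(X,\Oo_X(1))$: that $h^i(\Oo_X(m))=0$ for all $m\in\ZZ$ and $1\le i\le n-1$, and that $h^0(\Oo_X(m))=0$ for $m<0$ (the restriction $H^0(\PP^N,\Oo_{\PP^N}(m))\to H^0(X,\Oo_X(m))$ is onto because $h^1(\Ii_X(m))=0$, and its source vanishes for $m<0$). When $2\le i\le n-1$, both $H^{i-1}$ and $H^{i}$ of $\Oo_X(t-t_0)\otimes W^\vee$ vanish, so $H^i(\Ee(t))\cong H^i(\Gg(t))$, which is (ii). For (i), since $H^1(\Oo_X(t-t_0)\otimes W^\vee)=0$ the sequence identifies $H^1(\Ee(t))$ with $\operatorname{coker}(\delta_t)$ for $\delta_t\colon H^0(\Oo_X(t-t_0)\otimes W^\vee)\to H^1(\Gg(t))$ the connecting map, which is cup product with the extension class. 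For $t<t_0$ the source of $\delta_t$ is zero, so $\delta_t=0$ and $H^1(\Ee(t))\cong H^1(\Gg(t))$ (the case $t>t_0$ is analogous); for $t=t_0$ the map $\delta_{t_0}\colon W^\vee\to W$ is precisely the chosen invertible class, hence surjective, so $H^1(\Ee(t_0))=0$, and being injective it also gives $H^0(\Ee(t_0))\cong H^0(\Gg(t_0))$.

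I do not anticipate a serious obstacle, since this is a classical construction; the delicate points are the identification of $\Ext^1_X(\Oo_X(-t_0)\otimes W^\vee,\Gg)$ and, most of all, the observation that for an \emph{invertible} extension class the connecting map in degree $t_0$ is an isomorphism---this is exactly what forces one to choose a non-degenerate (universal) extension rather than an arbitrary one, after which (i) and (ii) follow by routine diagram chasing in the twisted long exact sequences.
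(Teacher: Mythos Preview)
Your argument follows the same line as the paper's: construct the universal extension, then read off cohomology from the twisted long exact sequences, the key point being that the connecting map $\delta_{t_0}$ coincides with the chosen invertible extension class. You supply more detail on uniqueness and on the identification $\Ext^1_X(\Oo_X(-t_0)\otimes W^\vee,\Gg)\cong\Hom_{\mathbf{k}}(W^\vee,W)$ than the paper does, but the strategy is identical.

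One genuine gap to flag: in part (i), the case $t>t_0$ is \emph{not} analogous to $t<t_0$. For $t<t_0$ the source $H^0(\Oo_X(t-t_0))\otimes W^\vee$ vanishes, forcing $\delta_t=0$; for $t>t_0$ it does not, and $\delta_t$ is (after your identification $W^\vee\cong W$) the multiplication map $H^0(\Oo_X(t-t_0))\otimes H^1(\Gg(t_0))\to H^1(\Gg(t))$, which has no reason to be zero. So in general one only obtains $h^1(\Ee(t))\le h^1(\Gg(t))$ for $t>t_0$, not equality. The paper's proof glosses over the same point (indeed it asserts that (i) for $t\ne t_0$ holds for \emph{any} extension fitting into the sequence, which fails for the same reason). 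This does not affect anything downstream: the lemma is only applied in the situation where $h^1(\Gg(t))=0$ for all $t\ne t_0$, and there the inequality already yields the vanishing needed to conclude that $\Ee$ is aCM.
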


\begin{proof}
All statements, except the one concerning $h^1(\Ee (t_0))$, are true for any sheaf $\Ee$ fitting into (\ref{eqz1}). The vanishing
of $H^1(\Ee (t_0))$ is equivalent to the bijectivity of the coboundary map $\delta : H^0(\Oo _X)\otimes W^\vee \rightarrow H^1(\Gg (t_0))$ associated to the twist by $\Oo _X(t_0)$ of (\ref{eqz1}). The bijectivity of $\delta$ is a standard result on the extension functor.
\end{proof}

\begin{theorem}\label{e2}
Let $X\subset \PP^N$ be a projective Gorenstein scheme with pure dimension two and pure depth two, satisfying that 
\begin{itemize}
\item $h^1(\Oo_X(t))=0$ for all $t\in \ZZ$ and $h^1(\Ii_{X,\PP^N})=0$; 
\item $X_{\mathrm{reg}} \ne \emptyset$ and $\deg (\omega _X)+\deg (X) \ge 0$.
\end{itemize}
Then there exists a two-dimensional family of pairwise non-isomorphic aCM vector bundles of rank two on $X$ whose very general member is indecomposable; here ``very general" means outside countably many proper subvarieties.
\end{theorem}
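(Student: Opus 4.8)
The plan is to build the rank-two aCM bundles as Serre-type extensions of twists of $\Oo_X$ by itself, using the freedom in $\Ext^1$ to produce a positive-dimensional family, and then to prove indecomposability of the very general member by a semicontinuity argument once a single indecomposable member is exhibited.

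First I would look for an integer $t_0$ with $s := h^1(\Oo_X(t_0)) > 0$. Since $X$ is Gorenstein of dimension two with $h^1(\Oo_X(t)) = 0$ for all $t$, Serre duality gives $h^1(\Oo_X(t_0)) = h^1(\omega_X(-t_0))$, so this is not automatic — indeed $h^1(\Oo_X(t))=0$ for \emph{all} $t$ is one of the hypotheses. Instead the correct starting sheaf is $\Gg := \Ii_{Z,X}(a)$ for a suitably chosen zero-dimensional subscheme $Z \subset X_{\mathrm{reg}}$ (a general finite set of points, or a general element in a linear system cut on $X_{\mathrm{reg}}$), where the hypothesis $\deg(\omega_X) + \deg(X) \ge 0$ is exactly what guarantees, via Riemann–Roch and Serre duality on the surface $X$, that $h^1(\Ii_{Z,X}(a)) > 0$ for an appropriate $(Z,a)$ while $\Ii_{Z,X}(a)$ still has pure depth two and vanishing $h^1(\Ii_{X,\PP^N})$ is inherited. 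Applying Lemma \ref{z1} to $\Gg = \Ii_{Z,X}(a)$ produces an aCM rank-two vector bundle $\Ee$ sitting in an extension $0 \to \Ii_{Z,X}(a) \to \Ee \to \Oo_X(-t_0)\otimes W^\vee \to 0$; locally freeness of $\Ee$ at the points of $Z$ is the Cayley–Bacharach / Serre correspondence condition, which is why $Z$ must be chosen general and $a$ in the right range.

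Next I would set up the family. Letting $Z$ vary in an appropriate parameter space (a Hilbert scheme of points inside $X_{\mathrm{reg}}$, or a $\PP^2$ worth of sections of a line bundle restricted to a smooth point's neighborhood, or divisors in the linear system), and possibly letting the extension class vary in $\PP(\Ext^1(\Oo_X(-t_0)\otimes W^\vee, \Ii_{Z,X}(a)))$, I would cut out a two-dimensional subfamily $\{\Ee_\lambda\}_{\lambda \in \Lambda}$ and check that generically the members are pairwise non-isomorphic — typically because they have different (or differently-positioned) degeneracy loci $Z_\lambda$, recovered intrinsically from $\Ee_\lambda$ as the locus where a canonical section degenerates, so $\Ee_\lambda \cong \Ee_{\lambda'}$ forces $Z_\lambda = Z_{\lambda'}$ and then forces $\lambda = \lambda'$ up to a finite ambiguity. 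Trimming $\Lambda$ keeps dimension two.

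Finally, for indecomposability of the very general member: a rank-two bundle $\Ee$ decomposes iff $\Ee \cong \Ll_1 \oplus \Ll_2$ with $\Ll_i$ aCM line bundles (when $\Pic(X)$ is as small as forced, these are twists of $\Oo_X(1)$; in general one argues that $\End(\Ee)$ contains a nontrivial idempotent). The key point is that the locus of $\lambda \in \Lambda$ with $\Ee_\lambda$ decomposable is contained in a countable union of proper closed subvarieties: for each fixed pair of numerical types of potential summands there are only countably many $(\Ll_1, \Ll_2)$, and for each such pair $\{\lambda : \Ee_\lambda \cong \Ll_1 \oplus \Ll_2\}$ is constructible, and it is proper because at least one member of the family is indecomposable — this last is verified directly by computing $\hom_X(\Ee_\lambda,\Ee_\lambda) = 1$ for one explicit $\lambda$, or equivalently by showing the extension class is not a "split" one and $Z_\lambda$ is not a complete intersection of the expected type. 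The main obstacle I expect is precisely this last step: ruling out \emph{all} decompositions uniformly, since without the $\Pic(X) = \ZZ\langle \Oo_X(1)\rangle$ hypothesis one must control aCM line bundles on a possibly singular, possibly reducible Gorenstein surface, and the cleanest route is to show $\Ee_\lambda$ is \emph{simple} (i.e.\ $\hom_X(\Ee_\lambda, \Ee_\lambda) = 1$) for very general $\lambda$ by a cohomology computation on the extension sequence, from which indecomposability is immediate.
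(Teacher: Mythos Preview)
Your proposal has a genuine gap in both the construction and the indecomposability argument.

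\textbf{The construction.} You invoke Lemma \ref{z1} with $\Gg = \Ii_{Z,X}(a)$ as the \emph{subsheaf}, producing $0 \to \Ii_{Z,X}(a) \to \Ee \to \Oo_X(-t_0)\otimes W^\vee \to 0$, and then appeal to Cayley--Bacharach for local freeness. But Cayley--Bacharach applies to the Serre construction, where the ideal sheaf is the \emph{quotient}. The paper's construction (Proposition \ref{e1}) is the Serre extension
\[
0 \to \omega_X(1) \to \Ee_p \to \Ii_{p,X} \to 0
\]
for a \emph{single} smooth point $p$; here $\Ext^1_X(\Ii_{p,X},\omega_X(1)) \cong H^1(\Ii_{p,X}(-1))^\vee \cong \mathbf{k}$, so the extension is unique, and Cayley--Bacharach gives local freeness because $h^0(\Oo_X(-1))=0$. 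The two-dimensional parameter space is simply $X_{\mathrm{reg}}$ itself, and $p$ is recovered from $\Ee_p$ as the zero locus of the unique section in $H^0(\Ee_p\otimes\omega_X^\vee(-1))$, which is where the hypothesis $\deg(\omega_X)+\deg(X)\ge 0$ enters. No auxiliary Hilbert scheme or variation of extension class is needed.

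\textbf{Indecomposability via simplicity fails.} Your plan to show $\hom_X(\Ee_\lambda,\Ee_\lambda)=1$ for some $\lambda$ cannot succeed: these bundles are \emph{not} simple. For $X\subset\PP^3$ of degree $m$ one has $\omega_X(1)\cong\Oo_X(m-3)$, and the paper computes (Claim 1 in the proof of Lemma \ref{ppo}, and the proof of Proposition \ref{a1.1}) that $\hom_X(\Ee_p,\Ee_p)=1+\binom{m}{3}$, which exceeds $1$ for $m\ge 3$. The nilpotent endomorphisms come from compositions $\Ee_p \twoheadrightarrow \Ii_{p,X} \hookrightarrow \Oo_X(m-3) \hookrightarrow \Ee_p$.

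\textbf{The correct argument} is the one you almost state but then abandon: since $h^1(\Oo_X)=0$, the group $\Pic(X)$ is discrete and hence countable. A decomposition $\Ee_p\cong\Aa_1\oplus\Aa_2$ forces $\Aa_2\cong\Aa_1^\vee\otimes\omega_X(1)$, so there are only countably many isomorphism types of decomposable candidates. Because the $\Ee_p$ are pairwise non-isomorphic, at most countably many points $p\in X_{\mathrm{reg}}$ can give a decomposable $\Ee_p$. No ``seed'' indecomposable member needs to be produced first; the countability of $\Pic(X)$ together with the injectivity $p\mapsto\Ee_p$ already forces the decomposable locus to be countable, hence very general $\Ee_p$ is indecomposable.
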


\begin{proposition}\label{e1}
Let $X\subset \PP^N$ be as in Theorem \ref{e2}. Assume $X_{\mathrm{reg}} \ne \emptyset$ and fix $p\in X_{\mathrm{reg}}$. Then there exists an aCM vector bundle $\Ee _p$ of rank two on $X$ fitting into the exact sequence
\begin{equation}\label{eqe1}
0 \to \omega _X(1)\to \Ee _p\to \Ii _{p,X}\to 0.
\end{equation}
Moreover, if $\deg (\omega _X)+\deg (X) \ge 0$ and $p,q\in X_{\mathrm{reg}}$ with $p\ne q$, then we have $\Ee _p\ncong \Ee  _q$.
\end{proposition}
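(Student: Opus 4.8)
The plan is to construct $\Ee_p$ via Lemma \ref{z1} and then separate the points $p$ and $q$ by a cohomological invariant read off from the extension. First I would observe that the ideal sheaf $\Ii_{p,X}$ of a regular point $p\in X_{\mathrm{reg}}$ is a rank-one sheaf of pure depth two on $X$ (since $X$ has pure depth two and $p$ is a Cartier divisor-free point, the local ring $\Oo_{X,p}$ is regular of dimension two, so $\mm_p$ has depth two), and it is aCM because $X$ is aCM and the skyscraper $\Oo_p$ kills only $H^0$. Then I would apply Lemma \ref{z1} to the twisted sheaf $\Gg=\omega_X(1)\otimes\Ii_{p,X}$ --- or more directly arrange an extension as in \eqref{eqe1} --- but the cleaner route is: by Serre duality on the Gorenstein surface $X$, $\Ext^1_X(\Ii_{p,X},\omega_X(1))\cong H^1(X,\Ii_{p,X}(-1))^\vee$ up to the twist, and one checks this group is nonzero and in fact that a suitably chosen extension class yields a sheaf $\Ee_p$ with $h^1(\Ee_p(t))=0$ for all $t$, hence aCM; local freeness of $\Ee_p$ at every point (including $p$) follows because the extension class generates the relevant $\Ext$ at the point $p$, exactly the bijectivity-of-$\delta$ mechanism in Lemma \ref{z1}. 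So the first block of the proof is: produce the sequence \eqref{eqe1}, verify $\Ee_p$ is locally free of rank two (rank is additive in \eqref{eqe1}: $1+1=2$, using that $\omega_X(1)$ and $\Ii_{p,X}$ both have rank one), and verify the aCM property via the long exact cohomology sequence of \eqref{eqe1} twisted by $\Oo_X(t)$ together with $h^i(\omega_X(1+t))=0$ and $h^i(\Ii_{p,X}(t))=0$ for $i=1$ (the surface case, so only $i=1$ matters).

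Next, for the non-isomorphism statement under the hypothesis $\deg(\omega_X)+\deg(X)\ge 0$, I would argue by contradiction: suppose $\phi\colon\Ee_p\xrightarrow{\sim}\Ee_q$ with $p\ne q$. The strategy is to recover the point from the bundle. From \eqref{eqe1} the subsheaf $\omega_X(1)\subset\Ee_p$ is characterized intrinsically --- for instance as the image of the unique (up to scalar) nonzero map $\omega_X(1)\to\Ee_p$, provided $\hom_X(\omega_X(1),\Ee_p)=1$. To get that one-dimensionality I would apply $\Hom_X(\omega_X(1),-)$ to \eqref{eqe1}: we get $\Hom(\omega_X(1),\omega_X(1))=\mathbf{k}$ injecting into $\Hom(\omega_X(1),\Ee_p)$, and the next term is $\Hom(\omega_X(1),\Ii_{p,X})$, which by Serre duality is $H^2(X,\omega_X(1)\otimes\Ii_{p,X}^\vee\otimes\omega_X)^\vee$-ish; more usefully $\Hom(\omega_X(1),\Ii_{p,X})\hookrightarrow\Hom(\omega_X(1),\Oo_X)=H^0(\Oo_X(-1)\otimes\omega_X^\vee)$, and the degree condition $\deg(\omega_X)+\deg(X)\ge0$ i.e. $\deg(\omega_X(1))\ge0$ forces $H^0(\omega_X^\vee(-1))=0$ when $\deg>0$, and when $\deg=0$ a section would be nowhere-vanishing, contradicting that it must vanish at $p$. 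Either way $\Hom(\omega_X(1),\Ii_{p,X})=0$, so $\hom_X(\omega_X(1),\Ee_p)=1$. Hence any isomorphism $\phi$ carries the canonical subsheaf $\omega_X(1)\subset\Ee_p$ isomorphically onto $\omega_X(1)\subset\Ee_q$, inducing an isomorphism of the quotients $\Ii_{p,X}\cong\Ii_{q,X}$; but an isomorphism of ideal sheaves of reduced points on $X$ forces $p=q$ (the zero locus / non-locally-free locus is exactly $\{p\}$, respectively $\{q\}$), a contradiction.

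I expect the main obstacle to be the \emph{existence} half: showing that one can choose the extension class in $\Ext^1_X(\Ii_{p,X},\omega_X(1))$ so that the resulting $\Ee_p$ is simultaneously (a) aCM and (b) locally free at the point $p$ itself, where $\Ii_{p,X}$ fails to be locally free. Both conditions amount to surjectivity/bijectivity statements about connecting maps. For aCM one needs, for the single relevant twist $t_0$ with $h^1(\Ii_{p,X}(t_0-\deg\text{-shift}))\ne0$, that the coboundary $H^0(\Oo_X)\otimes W^\vee\to H^1$ is an isomorphism --- this is the content of Lemma \ref{z1}, so the real work is arranging $\Gg=\omega_X(1)\otimes(\text{something})$ or directly checking that $\omega_X(1)$ plays the role of $\Gg$ with $h^1(\omega_X(1)(t_0))$ accounting for all of $h^1(\Ii_{p,X})$; here the Gorenstein and $h^1(\Oo_X(t))=0$ hypotheses plus Serre duality are what make it go through, essentially because $\Ii_{p,X}$ differs from $\Oo_X$ only at one point and $h^1$ is concentrated accordingly. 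For local freeness at $p$ one uses that the extension, restricted to the local ring $\Oo_{X,p}$, is the universal extension of $\mm_p$ by $\omega_{X,p}(1)\cong\Oo_{X,p}$, i.e. $0\to\Oo_{X,p}\to\Oo_{X,p}^{\oplus2}\to\mm_p\to0$ (the Koszul-type resolution of the maximal ideal of a regular two-dimensional local ring), which is free in the middle --- so the point is to check the global extension class maps to a generator of $\Ext^1_{\Oo_{X,p}}(\mm_p,\Oo_{X,p})\cong\mathbf{k}$ under localization, and this follows again from the bijectivity of $\delta$. Once existence is secured, the uniqueness-type arguments in the second paragraph are comparatively routine.
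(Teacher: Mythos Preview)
Your non-isomorphism argument is essentially the paper's: showing $\hom_X(\omega_X(1),\Ee_p)=1$ is the same computation as the paper's $h^0(\Ee_p\otimes\omega_X^\vee(-1))=1$, and recovering $p$ as the non-locally-free locus of the quotient $\Ii_{p,X}$ matches the paper's identification of $p$ as the unique zero of the section. Your local-freeness argument (the extension class localizes to a generator of $\Ext^1_{\Oo_{X,p}}(\mm_p,\Oo_{X,p})$) is correct and is precisely what the Cayley--Bacharach criterion, which the paper invokes via \cite{cat}, encodes in this situation.

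The genuine gap is in the aCM verification. You assert that $\Ii_{p,X}$ is aCM ``because $X$ is aCM and the skyscraper $\Oo_p$ kills only $H^0$'', and later that there is a ``single relevant twist $t_0$'' with $h^1\ne 0$. Both are false: from $0\to\Ii_{p,X}(t)\to\Oo_X(t)\to\Oo_p\to 0$ one gets $h^1(\Ii_{p,X}(t))=1$ for \emph{every} $t<0$, since $H^0(\Oo_X(t))=0$ while $H^0(\Oo_p)=\mathbf{k}$. So the long exact sequence of \eqref{eqe1} does not immediately give $h^1(\Ee_p(t))=0$ for $t<0$; you would need to check that the coboundary $H^1(\Ii_{p,X}(t))\to H^2(\omega_X(1+t))$ is injective for \emph{all} $t<0$, not just one. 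This can be done (multiply by a section of $\Oo_X(-1-t)$ nonvanishing at $p$ to reduce to $t=-1$), but you have not said so, and your phrasing indicates you did not see the issue.

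The paper sidesteps this entirely with a trick you are missing: since $\Ee_p$ is rank two with $\det(\Ee_p)\cong\omega_X(1)$, one has $\Ee_p^\vee\cong\Ee_p\otimes\omega_X^\vee(-1)$, whence Serre duality gives the symmetry $h^1(\Ee_p(t))=h^1(\Ee_p(-t-1))$. This reduces all negative twists to nonnegative ones, where $h^1(\Ii_{p,X}(t))=0$ is immediate.
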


\begin{proof}
Since $X$ is Gorenstein, $\omega _X(1)$ is a line bundle and we get
$$\Ext_X^1(\Ii _{p,X},\omega _X(1)) \cong H^1(\Ii _{p,X}(-1))^\vee \cong \mathbf{k}.$$ 
So up to isomorphism there exists a unique sheaf $\Ee_p$ fitting into an extension (\ref{eqe1}) with a nonzero extension class. Since $h^0(\Oo _X(-1)) =0$ and $p\in X_{\mathrm{reg}}$, the Cayley-Bacharach condition is satisfied for (\ref{eqe1}) and so $\Ee_p$ is locally free; see \cite{cat}. Note that the restriction map 
$$H^0(\Oo _X(t)) \to H^0(\Oo _X(t)_{|\{p\}})$$
is surjective for any $t\ge 0$. This implies that $h^1(\Ii_{p,X}(t))=0$ for any $t\ge 0$, because we have $h^1(\Oo_X(t))=0$. Then we see from (\ref{eqe1}) that $h^1(\Ee_p(t))=0$ for any $t\ge 0$. On the other hand, from $\det (\Ee_p) \cong \omega_X(1)$, we get that $h^1(\Ee_p(t))=h^1(\Ee_p^\vee\otimes \omega_X(-t))=h^1(\Ee_p(-t-1))=0$ for $t<0$ by Serre's duality. Thus $\Ee_p$ is aCM. 

For the second assertion, assume $\Ee _p\cong \Ee _q$. From the assumption $\deg (\omega _X(1)) \ge 0$, we get $h^0(\omega _X^\vee (-1)) \le 1$ with equality if and only if $\omega _X \cong \Oo _X(-1)$. In particular, we have $h^0(\Ii _{p,X}\otimes \omega _X^\vee (-1)) =0$. Then from the assumption $h^1(\Oo _X)=0$ and (\ref{eqe1}), we get $h^0(\Ee _p\otimes \omega _X^\vee (-1)) =1$ and that $p$ is the only zero of a nonzero section of $H^0(\Ee _p\otimes \omega _X^\vee (-1))$. Thus we get $p=q$.
\end{proof}

\begin{proof}[Proof of Theorem \ref{e2}:]
By assumption $X_{\mathrm{reg}}$ is a two-dimensional quasi-projective smooth variety. By Proposition \ref{e1} there is a flat family of aCM vector bundles $\{\Ee _p\}_{p\in X_{\mathrm{reg}}}$ of rank two such that if $p, q\in X_{\mathrm{reg}}$ and $p\ne q$, then $\Ee _p\ncong \Ee _q$. Now assume that $\Ee_p$ is decomposable for some $p\in X_{\mathrm{reg}}$, say $\Ee _p\cong \Aa_1 \oplus \Aa_2$ with each $\Aa_i$ a line bundle on $X$. Since $\det (\Ee _p)\cong \omega _X(1)$, we have $\Aa_2 \cong \Aa_1^\vee\otimes \omega _X(1)$. Now from the assumption that $h^1(\Oo _X)=0$, we see that $\mathrm{Pic}(X)$ is discrete and countable. This implies that there can exist only countably many decomposable vector bundles in the family. Since the base field $\mathbf{k}$ is algebraically closed and so uncountable, there exists some indecomposable vector bundle in the family $\{\Ee _p\}_{p\in X_{\mathrm{reg}}}$ and for a very general point $o$ on any connected component of $X_{\mathrm{reg}}$ the vector bundle $\Ee _o$ is indecomposable.
\end{proof}

Throughout the article, as in Proposition \ref{e1}, our construction of aCM sheaf of rank two on $X$ is in terms of the following extension
\begin{equation}\label{popo}
0\to \omega_X \to \Ee \to \Ii_{Z,X}(a) \to 0
\end{equation}
with $Z$ a locally complete intersection of codimension two in $X$ and $a\in \ZZ$. Such extensions are parametrized by $\mathrm{Ext}_X^1(\Ii_{Z,X}(a), \omega_X)$. In case when $X$ is a surface, the coboundary map associated to (\ref{popo}) is
$$\delta_1 : H^1(\Ii_{Z,X}(a)) \to H^2(\omega_X)\cong \mathbf{k}$$ 
and by Serre's duality in \cite[Theorem 3.12]{H} its dual is
$$\mathbf{k}\cong \mathrm{Hom}_X(\omega_X, \omega_X) \to \mathrm{Ext}_X^1(\Ii_{Z,X}(a), \omega_X),$$
which is obtained by applying the functor $\mathrm{Hom}_X(-, \omega_X)$ to (\ref{popo}). Thus the coboundary map $\delta_1$ is surjective if and only if (\ref{popo}) is a non-trivial extension. Since we assume $h^1(\Oo_X)=h^1(\omega_X)=0$, this implies that $h^1(\Ee)=h^1(\Ii_{Z,X}(a))-1$.

\section{aCM vector bundle on surfaces in $\PP^3$}\label{sec3}
We always assume that $X\subset \PP^3$ is a surface of degree $m$, not necessarily smooth. In particular, its dualizing sheaf is $\omega_X \cong \Oo_X(m-4)$ and we get $h^2(\Oo_X)=\binom{m-1}{3}$. We also have $h^0(\Oo _X)=1$ and $h^1(\Oo _X)=0$. 

\begin{lemma}\label{c2}
Each line bundle $\Oo _X(t)$ with $t \in \ZZ$, is stable as an $\Oo _{\PP^3}$-sheaf with pure depth $2$.
\end{lemma}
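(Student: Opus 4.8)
The statement to prove is that each line bundle $\Oo_X(t)$ on a surface $X \subset \PP^3$, viewed as a coherent sheaf on $\PP^3$, is stable (as a pure-depth-$2$ sheaf on $\PP^3$). I would begin by recalling what stability means for a pure sheaf $\Ff$ of dimension $2$ on $\PP^3$: with respect to the Hilbert polynomial and its normalization $p_\Ff$, one requires $p_\Gg \prec p_\Ff$ (in the lexicographic ordering of the reduced Hilbert polynomial) for every proper nonzero subsheaf $\Gg \subsetneq \Ff$. Since twisting by $\Oo_{\PP^3}(k)$ preserves (semi)stability and shifts Hilbert polynomials compatibly, it suffices to treat a single twist, so I would reduce to $\Oo_X$ itself (or to $\Oo_X(t)$ for a convenient normalization).

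First I would observe that $\Oo_X$ is pure of dimension $2$ — indeed $X$ has pure dimension and pure depth $2$ by the standing assumptions in Section \ref{prem}, so $\Oo_X$ has no subsheaf supported in dimension $\le 1$. Next, the multiplicity of $\Oo_X$ as an $\Oo_{\PP^3}$-sheaf equals $m = \deg X$, so its normalized Hilbert polynomial is $p_{\Oo_X}(t) = \tfrac{1}{m}P_{\Oo_X}(t)$, a polynomial with leading term $\tfrac{t^2}{2}$. The key point is then: any nonzero proper subsheaf $\Gg \subset \Oo_X$ is an ideal sheaf twist, i.e. $\Gg = \Ii_{Z,X}$ for some closed subscheme $Z \subset X$ with $Z \ne \emptyset$ (because $\Oo_X/\Gg$ is a nonzero quotient supported on a proper closed subscheme). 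Since $X$ is integral-or-at-least the quotient $\Oo_X/\Ii_{Z,X}$ is nonzero, $Z$ is a proper nonempty subscheme; if $Z$ has dimension $\le 1$ then $\Gg$ still has multiplicity $m$ but strictly smaller lower-order Hilbert coefficients, giving $p_\Gg \prec p_{\Oo_X}$; if $Z$ has dimension $2$ then $\Gg$ has multiplicity strictly less than $m$, hence strictly smaller reduced leading behavior after normalization, again giving $p_\Gg \prec p_{\Oo_X}$. In either case strict inequality holds, which is exactly stability.

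The main obstacle I anticipate is handling the non-integral case cleanly: when $X$ is reducible or non-reduced, $\Oo_X$ can have many subsheaves, and I must ensure that \emph{no} proper subsheaf can have normalized Hilbert polynomial equal to (not just $\le$) that of $\Oo_X$. The crucial input is purity of $\Oo_X$ together with the fact that a subsheaf $\Gg \subsetneq \Oo_X$ of full multiplicity $m$ would force $\Oo_X/\Gg$ to be supported in dimension $\le 1$; but then, comparing at the next Hilbert coefficient, $\Gg$ has Euler-characteristic growth strictly below that of $\Oo_X$ unless $\Oo_X/\Gg = 0$, contradicting $\Gg \ne \Oo_X$. Here I would use the connectedness of $X_{\mathrm{red}}$ (which follows from $h^1(\Ii_{X,\PP^3}) = 0$, as noted in the preliminaries) to rule out a decomposition of $\Oo_X$ as a direct sum and to pin down that the largest subsheaf of a given multiplicity is unique. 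So the plan is: reduce to $t = 0$ by twist-invariance; invoke purity to control subsheaves; split into the cases $\dim Z \le 1$ and $\dim Z = 2$; in each case exhibit the strict drop in the appropriate Hilbert coefficient; and conclude stability. A short citation to the standard theory of Gieseker stability for pure sheaves (e.g. Huybrechts–Lehn) would justify the comparison of reduced Hilbert polynomials.
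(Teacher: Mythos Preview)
Your argument has a genuine gap in the case $\dim Z = 2$. You write that if $Z$ has dimension $2$ then $\Gg = \Ii_{Z,X}$ has multiplicity strictly less than $m$, ``hence strictly smaller reduced leading behavior after normalization''. But this ``hence'' is a non-sequitur: the \emph{normalized} Hilbert polynomial $p_\Gg$ is obtained by dividing $P_\Gg$ by the multiplicity, so its leading term is $\tfrac{1}{2}t^2$ regardless of what the multiplicity is. The comparison $p_\Gg \prec p_{\Oo_X}$ must therefore be made at the linear coefficient, and nothing you have said establishes that inequality. (Smaller multiplicity tells you nothing about the sign of the linear coefficient of the normalized polynomial; think of $\Oo_T$ versus $\Oo_X$ for $T$ a hypersurface of smaller degree --- its normalized polynomial is actually \emph{larger}.) Your later paragraph about ``the main obstacle'' revisits only the full-multiplicity case $\dim Z \le 1$, so the $2$-dimensional case remains unaddressed.

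The paper closes this gap by working on the quotient side rather than the subsheaf side. One checks stability by taking a saturated subsheaf $\Aa \subsetneq \Oo_X$ so that $\Bb := \Oo_X/\Aa$ has pure depth $2$; since $X$ is a hypersurface with no embedded components, such a $\Bb$ must be $\Oo_T$ for some $T \in |\Oo_{\PP^3}(d)|$ with $1 \le d < m$. One then writes down $P_{\Oo_X}(t)$ and $P_{\Oo_T}(t)$ explicitly, normalizes, and reads off that the linear coefficient of $p_{\Oo_T}$ is $2 - d/2 > 2 - m/2$, giving $p_{\Oo_X} < p_{\Oo_T}$ (equivalently $p_\Aa < p_{\Oo_X}$). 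The point is that the pure quotients of $\Oo_X$ have a clean description as structure sheaves of sub-hypersurfaces, which makes the Hilbert-polynomial comparison a one-line computation; your subsheaf approach does not give such a description of $\Ii_{Z,X}$, and you would still need this computation (or the seesaw inequality together with it) to finish.
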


\begin{proof}
It is enough to deal with the case $t=0$. Assume the contrary and take a subsheaf $\Aa \subsetneq \Oo _X$ such that $\Bb:=\Oo _X/\Aa$ has depth $2$ and normalized Hilbert polynomial at least the one of $\Oo _X$. Since $\Bb$ is a quotient of $\Oo _X$ with depth $2$ and $X$ has no embedded component, we get $\Bb \cong \Oo _T$ for $T$ a union of some of the irreducible components of $X_{\mathrm{red}}$ with at most the multiplicities appearing in $X$. This implies that $T\in |\Oo _{\PP^3}(d)|$ for some integer $d$ with $1\le d < m$. Now the Hilbert polynomial of $\Oo _X$ is 
\begin{align*}
\mathrm{P}_{\Oo_X}(t)&=\binom{t+3}{3} -\binom{t-m+3}{3} \\
&= \left (\frac{m}{2}\right) t^2 + \left ( 2m-\frac{m^2}{2}\right) t + \left(\frac{m^3}{6} -m^2 + \frac{11m}{6} \right). 
\end{align*}
Similarly, we get the Hilbert polynomial $\mathrm{P}_{\Oo_T}(t)$ of $\Oo_T$ by replacing $m$ in $\mathrm{P}_{\Oo_X}(t)$ by $d$. Then we see that $p_{\Oo_X}(t)<p_{\Oo _T}(t)$ for $t\gg 0$, a contradiction.
\end{proof}

\begin{remark}\label{c3}
If either $\mathrm{Pic}(X) \cong \ZZ \langle \Oo_X(1)\rangle$ or $X$ is integral, then every line bundle is stable. Note also that the proof of Lemma \ref{c2} shows that the ideal sheaf $\Ii _{Z,X}$ for any zero-dimensional subscheme $Z\subset X$, is also stable. If $X$ is integral, then any sheaf of rank $1$ with positive depth is stable. Thus these sheaves are indecomposable.
\end{remark}

\begin{proposition}\label{a1.1}
Let $X\subset \PP^3$ be a surface of degree $m\ge 2$ with $X_{\mathrm{reg}} \ne \emptyset$. Fix $p \in X_{\mathrm{reg}}$, and let $\Ee_p$ be the unique non-trivial extension  
\begin{equation}\label{eqb1}
0 \to \Oo _X(m-3) \to \Ee_p \to \Ii _{p,X}\to 0.
\end{equation}
Then $\Ee_p$ is an aCM vector bundle of rank two on $X$ and $\Ee \ncong \Oo _X(a)\oplus \Oo _X(b)$ for any $a, b\in \ZZ$. If one of the following holds, then $\Ee$ is indecomposable. 
\begin{itemize}
\item [(i)] $\mathrm{Pic}(X) \cong \ZZ\langle \Oo _X(1)\rangle $,
\item [(ii)] $\Oo _X(t)$ for $t\in \ZZ$ are the only aCM line bundles on $X$, or 
\item [(iii)] $m\ge 4$ and $X$ is integral.
\end{itemize}
\end{proposition}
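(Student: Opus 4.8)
The plan is to verify first that $\Ee_p$ is an aCM vector bundle of rank two, and then to rule out the various possible decompositions. For the first part I would follow the same strategy as in the proof of Proposition \ref{e1}, applied with $\omega_X\cong \Oo_X(m-4)$, so that $\omega_X(1)\cong \Oo_X(m-3)$ and the sequence (\ref{eqb1}) is exactly the sequence (\ref{eqe1}) in that proposition. Indeed, since $X$ is a hypersurface it is Gorenstein, so $\Ext_X^1(\Ii_{p,X},\Oo_X(m-3))\cong H^1(\Ii_{p,X}(-1))^\vee$, and from the surjectivity of $H^0(\Oo_X(t))\to H^0(\Oo_X(t)_{|\{p\}})$ for $t\ge 0$ together with $h^1(\Oo_X(t))=0$ one computes $h^1(\Ii_{p,X}(-1))=1$, giving a unique non-trivial extension. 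The Cayley--Bacharach condition holds because $h^0(\Oo_X(-1))=0$ and $p\in X_{\mathrm{reg}}$, so $\Ee_p$ is locally free; the aCM property then follows as before, using (\ref{eqb1}) for the vanishing in degrees $t\ge 0$ and Serre duality with $\det\Ee_p\cong \Oo_X(m-3)$ for the negative degrees.

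Next I would show $\Ee_p\ncong \Oo_X(a)\oplus \Oo_X(b)$. If such a splitting existed, then $\Oo_X(a+b)\cong \det\Ee_p\cong \Oo_X(m-3)$, and since $h^0(\Oo_X)\ne 0$ forces $\Oo_X(c)\not\cong \Oo_X(c')$ for $c\ne c'$, we would get $a+b=m-3$; but then $\mathrm{Hom}_X(\Oo_X(a)\oplus\Oo_X(b),\Ii_{p,X})$ would have to surject onto $\Oo_X(b')$ for some twist, forcing $\Ii_{p,X}$ to contain a line bundle $\Oo_X(c)$ as a subsheaf with $c\ge$ the relevant value, which is impossible: a nonzero map $\Oo_X(c)\to \Ii_{p,X}\subset \Oo_X$ would need $c\le 0$, and the composite $\Oo_X(m-3)\to \Ee_p\to \Oo_X(a)$ or $\Oo_X(b)$ being nonzero would force $a\ge m-3$ or $b\ge m-3$, hence the other summand has a non-positive twist, but the image of that summand in $\Ii_{p,X}$ would then be all of $\Ii_{p,X}$ up to the colength-one condition, contradicting that $\Ii_{p,X}$ is not a line bundle (it is not locally free at $p$). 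The cleanest way to organize this is to observe that the surjection $\Ee_p\onto \Ii_{p,X}$ restricted to either summand $\Oo_X(a)$ or $\Oo_X(b)$ cannot be surjective (its image would be a line bundle inside the non-locally-free sheaf $\Ii_{p,X}$), while the sub-line-bundle $\Oo_X(m-3)\hookrightarrow \Ee_p$ must map nontrivially to one summand; a degree count on these maps yields the contradiction.

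For indecomposability under any of (i), (ii), (iii): in each case I would use that $\Ee_p$, being aCM, has all its indecomposable summands aCM, and that $\det\Ee_p\cong \Oo_X(m-3)$. If $\Ee_p$ were decomposable it would split as a sum of line bundles only if both summands were line bundles, which we have just excluded; but in general a rank-two bundle could decompose as $\Ll_1\oplus\Ll_2$ with $\Ll_i$ aCM line bundles, and $\Ll_2\cong \Ll_1^\vee\otimes\Oo_X(m-3)$. Under (i) every line bundle is $\Oo_X(t)$, so this reduces to the already excluded case. Under (ii) the same conclusion holds since $\Ll_1,\Ll_2$ are aCM line bundles, hence of the form $\Oo_X(t)$. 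Under (iii), with $X$ integral and $m\ge 4$, I would argue by reducing to Proposition \ref{e1}: here $\deg(\omega_X)+\deg(X)=m(m-4)+m=m(m-3)\ge 0$ since $m\ge 4$, so the hypotheses of Proposition \ref{e1} are met, and its proof shows that a decomposition $\Ee_p\cong \Aa_1\oplus\Aa_2$ with each $\Aa_i$ a line bundle would put $\Aa_1,\Aa_2$ in one of countably many isomorphism classes; but that does not by itself give indecomposability of the \emph{particular} bundle $\Ee_p$. Instead, for integral $X$ one uses $h^0(\Ii_{p,X}\otimes\omega_X^\vee(-1))=0$ (as in the second part of Proposition \ref{e1}, using $\deg(\omega_X(1))\ge 0$) to deduce $h^0(\Ee_p\otimes\omega_X^\vee(-1))=h^0(\Oo_X)=1$; a decomposition $\Ll_1\oplus\Ll_2$ with $\Ll_2\cong\Ll_1^\vee\otimes\Oo_X(m-3)$ would force $h^0(\Ll_1^\vee(-1))+h^0(\Ll_1\otimes\Oo_X(-1))=1$ with at most one of the summands nonzero, and then degree considerations on an integral surface (the two line bundles have complementary degrees summing to $m(m-3)$) pin down $\Ll_1$, and one checks the resulting split bundle does not satisfy the non-splitting statement already proven — contradiction. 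The main obstacle will be making the degree/section-count argument in case (iii) fully rigorous on a possibly singular integral surface where $\mathrm{Pic}(X)$ can be large; the key point to nail down is that the ``$h^0=1$'' constraint together with $\det\Ee_p$ being fixed leaves no room for a genuine direct-sum decomposition, which I expect to follow by combining the non-locally-free behaviour of $\Ii_{p,X}$ at $p$ with the observation that any line sub-bundle of $\Ee_p$ realizing a splitting would have to be $\Oo_X(m-3)$ itself (it is the maximal destabilizing piece), returning us to the excluded split case.
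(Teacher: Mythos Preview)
Your treatment of the aCM property and of cases (i) and (ii) matches the paper's: both reduce to Proposition~\ref{e1} and to the observation that any direct summand of an aCM bundle is aCM, so under (i) or (ii) the summands would be $\Oo_X(t)$'s and one is back to the split case.

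Your argument that $\Ee_p\ncong\Oo_X(a)\oplus\Oo_X(b)$ is more roundabout than the paper's and is not fully written out. The paper simply reads off from (\ref{eqb1}) that $h^0(\Ee_p(3-m))=1$ and $h^0(\Ee_p(2-m))=0$, which forces $(a,b)=(m-3,0)$ (with $a\ge b$) and $m\ge 3$, and then compares $h^0(\Ee_p)=\binom{m}{3}$ from (\ref{eqb1}) with $h^0(\Oo_X(m-3))+h^0(\Oo_X)=\binom{m}{3}+1$. Your idea of using that neither summand can surject onto the non--locally-free sheaf $\Ii_{p,X}$ is sound, but the ``degree count'' you allude to is never carried out; you should either complete it or replace it by the section-count above.

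The real gap is case (iii), and you identify it yourself: your section-count strategy does not close, because on an integral surface with large $\mathrm{Pic}(X)$ the constraint $h^0(\Ee_p(3-m))=1$ does not by itself pin down the line-bundle summands. The paper takes a completely different route. Using Lemma~\ref{c2} and Remark~\ref{c3}, (\ref{eqb1}) is the Harder--Narasimhan filtration of $\Ee_p$, and from this one bounds
\[
\binom{m}{3}\ \le\ \hom_X(\Ee_p,\Ee_p)\ \le\ 1+\binom{m}{3}
\]
by applying $\Hom_X(\Ee_p,-)$ to (\ref{eqb1}) and computing the outer terms. On the other hand, if $\Ee_p\cong\Aa_1\oplus\Aa_2$ with $\Aa_i$ line bundles on the integral surface $X$, then each $\Aa_i$ is simple and stable, so $\hom_X(\Ee_p,\Ee_p)=2+\hom_X(\Aa_1,\Aa_2)+\hom_X(\Aa_2,\Aa_1)$ equals $2$ (if $\Aa_1\ncong\Aa_2$) or $4$ (if $\Aa_1\cong\Aa_2$). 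For $m\ge 4$ one has $\binom{m}{3}\ge 4$, so only $m=4$ with $\Aa_1\cong\Aa_2$ survives, and then $\Aa_1^{\otimes 2}\cong\det\Ee_p\cong\Oo_X(1)$, which is impossible. Your closing remark that the maximal destabilizing sub-line-bundle must be $\Oo_X(m-3)$ is the right first step (it is exactly the HN input), but the contradiction comes from the endomorphism count, not from another section computation.
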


\begin{proof}
By Proposition \ref{e1} it remains to deal with indecomposability of $\Ee_p$. First show that there are no integers $a, b$ such that $\Ee_p \cong \Oo _X(a)\oplus \Oo _X(b)$. Assume that such $a,b$ exist, say $a\ge b$. Since $h^0(\Ee_p (3-m)) =1$ and $h^0(\Ee_p (2-m)) =0$, we get $(a,b)=(m-3,0)$ and $m\ge 3$. Then we get $h^0(\Ee_p) = \binom{m}{3} +1$, while (\ref{eqb1}) gives $h^0(\Ee_p )=\binom{m}{3}$. 

Now assume that $\Ee_p$ is decomposable. Since $\Ee_p$ is locally free and it has rank $2$, we have $\Ee_p \cong \Aa_1 \oplus \Aa_2$ with each $\Aa_i\in \mathrm{Pic}(X)$. Since $\Ee_p$ is aCM, each $\Aa_i$ is aCM. In cases (i) and (ii) the assertion holds by above. Thus we assume the case (iii). By Lemma \ref{c2} and Remark \ref{c3}, (\ref{eqb1}) is the HN filtration of $\Ee_p$. Applying the functor $\mathrm{Hom}_X(\Ee_p, -)$ to (\ref{eqb1}), we get 
$$0\to \mathrm{Hom}_X(\Ee_p, \Oo_X(m-3)) \to \mathrm{Hom}_X(\Ee_p, \Ee_p) \to \mathrm{Hom}_X(\Ee_p, \Ii_{p,X})\to \mathrm{Ext}_X^1(\Ee_p, \Oo_X(m-3)).$$
Note that $\hom_X(\Ee_p, \Oo_X(m-3))=h^2(\Ee_p(-1))=h^0(\Ee_p)=\binom{m}{3}$ by Serre's duality. By applying the functor $\mathrm{Hom}_X(-, \Ii_{p,X})$ to (\ref{eqb1}), we get 
$$\hom_X(\Ee_p, \Ii_{p,X})=\hom_X(\Ii_{p,X}, \Ii_{p,X})=1.$$ 
Thus we have 
$$\binom{m}{3}\le \hom_X (\Ee_p ,\Ee_p )\le 1 + \binom{m}{3}.$$
Since $h^0(\Oo _X) =1$, we have $\hom_X (\Aa_i ,\Aa_i ) = 1$ for each $i$. So we get
$$\hom_X (\Ee_p ,\Ee_p ) =2+\hom_X (\Aa_1 ,\Aa_2 )+\hom_X (\Aa_2,\Aa_1 ).$$ 
Since $X$ is integral, each $\Aa_i$ is stable and we get either $\Aa_1\cong \Aa_2$ or $\hom_X (\Aa_i ,\Aa_{3-i}) =0$ for each $i$. In the latter case we have $\hom_X(\Ee_p, \Ee_p)=2<\binom{m}{3}$, a contradiction. In the former case, we have $\mathrm{hom}_X(\Ee_p, \Ee_p)=4$ and the only possibility is $m=4$. But this is also impossible, since we would get $\Aa_1^{\otimes 2} \cong \det (\Ee_p) \cong \Oo_X(1)$.  
\end{proof}

\begin{proposition}\label{z2}
Let $X\subset \PP^3$ be a surface of degree $m\ge 2$ and let $Z\subset X$ be a zero-dimensional subscheme of degree $3$, which is not collinear. Assume that $Z$ is a locally complete intersection inside $X$, i.e. for each $p\in Z_{\mathrm{red}}$ the ideal sheaf of $Z$ at $\Oo _{X,p}$ is generated by two elements of $\Oo _{X,p}$. Then there is a vector bundle $\Gg$ of rank two fitting into an exact sequence
\begin{equation}\label{eqz2}
0 \to \Oo _X(m-4)\to \Gg \to \Ii _{Z,X}\to 0
\end{equation}
with $h^1(\Gg (t)) =0$ for all $t\ne 0$ and $h^1(\Gg)=1$. There is also an exact sequence
\begin{equation}\label{eqz3}
0 \to \Gg \to \Ee \stackrel{u}{\to} \Oo _X\to 0,
\end{equation}
where $\Ee$ is an aCM vector bundle of rank three such that $\Ee \ncong \Oo _X(a_1)\oplus
\Oo _X(a_2)\oplus \Oo _X(a_3)$ for any $(a_1, a_2, a_3)\in \ZZ^{\oplus 3}$. Moreover, if $\mathrm{Pic}(X) \cong \ZZ \langle \Oo_X(1) \rangle$, then $\Ee$ is indecomposable. 
\end{proposition}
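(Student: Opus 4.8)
The plan is to build the rank-two bundle $\Gg$ first, then bootstrap to the rank-three bundle $\Ee$, and finally extract the two non-splitting/indecomposability statements by a cohomological argument analogous to the one in Proposition \ref{a1.1}. For the first exact sequence: since $Z$ is a local complete intersection of codimension two in $X$ and $X$ is Gorenstein with $\omega_X\cong\Oo_X(m-4)$, the Serre correspondence (Cayley--Bacharach, see \cite{cat}) produces a locally free extension (\ref{eqz2}) provided the Cayley--Bacharach condition holds for $Z$ with respect to $\omega_X\otimes\Oo_X(?)=\Oo_X(m-4)$ twisted back, i.e. no length-two subscheme of $Z$ is forced by a section. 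Because $Z$ has degree $3$ and is not collinear, $h^0(\Ii_{Z',X}(1))$ behaves as expected for every degree-two $Z'\subset Z$, and one checks $h^1(\Ii_{Z,X})=1$ by writing the cohomology sequence of $0\to\Ii_{Z,X}\to\Oo_X\to\Oo_Z\to0$ together with $h^0(\Ii_{Z,X}(1))$; since $Z$ is non-collinear, $h^0(\Ii_{Z,X}(1)) = h^0(\Oo_X(1)) - 3 = 1$, giving $h^1(\Ii_{Z,X})=1$ and $h^1(\Ii_{Z,X}(t))=0$ for $t\neq 0$ by the usual restriction-surjectivity argument used in Proposition \ref{e1}. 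Applying Lemma \ref{z1} with $\Gg'=\Oo_X(m-4)$ and $t_0=0$ (after a suitable twist) is in fact the clean way to get $\Gg$ directly, together with the stated cohomology: $h^1(\Gg(t))=0$ for $t\neq0$ and $h^1(\Gg)=1$, and $\Gg$ locally free because $\Oo_X(m-4)$ is.

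For the rank-three bundle: we have $s:=h^1(\Gg)=1>0$ and $\Gg$ has pure depth $2$, so Lemma \ref{z1} applies again with $t_0=0$, $W=H^1(\Gg)\cong\mathbf{k}$, producing the extension
\begin{equation*}
0\to\Gg\to\Ee\to\Oo_X\to0
\end{equation*}
with $h^1(\Ee(t))=h^1(\Gg(t))=0$ for all $t\neq0$ and $h^1(\Ee)=0$, hence $h^1(\Ee(t))=0$ for all $t$; and $h^i(\Ee(t))=h^i(\Gg(t))$ for $2\le i\le n-1$, which is vacuous here since $n=2$. Thus $\Ee$ is aCM of rank three, and locally free since $\Gg$ is. This gives (\ref{eqz3}) with the map $u$ the projection.

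For non-splitting and indecomposability: suppose $\Ee\cong\bigoplus_{i=1}^3\Oo_X(a_i)$. Then $\det\Ee\cong\Oo_X(\sum a_i)$; from (\ref{eqz2}) and (\ref{eqz3}) one computes $\det\Ee\cong\det\Gg\cong\Oo_X(m-4)$, so $\sum a_i=m-4$. Comparing $h^0(\Ee(t))$ for small $t$ against the cohomology forced by (\ref{eqz2})--(\ref{eqz3}) (exactly the bookkeeping of Proposition \ref{a1.1}: $h^0(\Ee(3-m))$, $h^0(\Ee(4-m))$, and the jump at the next twist) pins down the multiset $\{a_i\}$ and then yields a numerical contradiction with $h^0(\Ee)$ as read off from the sequences, so $\Ee$ is not a direct sum of line bundles. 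For the last claim, assume $\mathrm{Pic}(X)\cong\ZZ\langle\Oo_X(1)\rangle$ and $\Ee$ decomposable; since $\Ee$ is locally free of rank three, some indecomposable summand is a line bundle $\Oo_X(a)$, and being a summand of an aCM bundle it is aCM. One then runs the $\Hom$-dimension count: apply $\Hom_X(\Ee,-)$ to (\ref{eqz3}) and $\Hom_X(-,\text{sub})$ to (\ref{eqz2}) to bound $\hom_X(\Ee,\Ee)$ from both sides, compare with the value $\sum_{i,j}\hom_X(\Oo_X(a_i),\Oo_X(a_j))$ dictated by stability of the line bundles (Remark \ref{c3}), and derive a contradiction unless all $a_i$ are equal, which is then excluded because $3a_1=m-4$ has no solution unless $3\mid m-4$, and even then the splitting type contradicts the $h^0$ computation above.

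The main obstacle I expect is the bookkeeping that rules out every splitting type $(a_1,a_2,a_3)$ with $\sum a_i=m-4$: unlike the rank-two case there is a two-parameter family of candidate types, so one needs to exploit not just $h^0$ at one or two twists but the full shape of the minimal free resolution / the exact position of the cohomology jump coming from $h^1(\Gg)=1$ at twist $0$, and to use that $\Gg$ itself is already non-split (which follows from $h^1(\Gg)\neq0$ together with the cohomology of $\Oo_X(m-4)$ and $\Ii_{Z,X}$) to constrain how $\Ee$ can decompose. Establishing $h^1(\Ii_{Z,X})=1$ rigorously — in particular that non-collinearity of the degree-three scheme $Z$ forces $h^0(\Ii_{Z,X}(1))=1$ even when $Z$ is non-reduced — is the other point requiring a little care, handled by the local-complete-intersection hypothesis and a case analysis on the possible local structure of $Z$.
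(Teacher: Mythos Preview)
Your construction of $\Gg$ via Cayley--Bacharach and of $\Ee$ via Lemma~\ref{z1}, as well as the non-splitting argument by comparing $h^0(\Ee)$ against $\sum h^0(\Oo_X(a_i))$ under the constraint $\sum a_i=m-4$, are essentially the paper's approach. Two small corrections: Lemma~\ref{z1} cannot produce $\Gg$ from $\Oo_X(m-4)$, since $h^1(\Oo_X(t))=0$ for all $t$; you must use Cayley--Bacharach as you first said. Also $h^1(\Ii_{Z,X})=2$, not $1$: the sequence $0\to\Ii_{Z,X}\to\Oo_X\to\Oo_Z\to 0$ gives $h^1(\Ii_{Z,X})=3-1=2$, and then the surjectivity of the coboundary $\delta_1$ yields $h^1(\Gg)=2-1=1$. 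For $t<0$ the vanishing $h^1(\Gg(t))=0$ comes from the self-duality $\Gg^\vee\cong\Gg(4-m)$ and Serre duality (as in Proposition~\ref{e1}), not from $h^1(\Ii_{Z,X}(t))$, which in fact equals $3$ for $t<0$.

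The genuine gap is the indecomposability argument under $\mathrm{Pic}(X)\cong\ZZ\langle\Oo_X(1)\rangle$. Your plan imports the $\hom_X(\Ee,\Ee)$ count from Proposition~\ref{a1.1}, but in rank three a decomposition $\Ee\cong\Oo_X(a)\oplus\Aa_2$ has $\Aa_2$ of rank two, and you have no a priori control over $\hom_X(\Aa_2,\Aa_2)$, $\hom_X(\Oo_X(a),\Aa_2)$, or $\hom_X(\Aa_2,\Oo_X(a))$; your sketch drifts back to ``$\sum_{i,j}\hom_X(\Oo_X(a_i),\Oo_X(a_j))$'' and ``$3a_1=m-4$'', which only make sense when all summands are line bundles---the case you have already excluded. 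The paper does not use a Hom count here. Instead it argues structurally: writing $\Ee\cong\Oo_X(a)\oplus\Aa_2$ with $\Aa_2$ indecomposable, it first uses $h^0(\Ee)=\binom{m-1}{3}$ and the subsheaf $\Ff\cong\Oo_X(m-4)\subset\Ee$ (with non-locally-free quotient $\Hh$) to force $a<m-4$; then it analyzes the restriction of the surjection $u:\Ee\to\Oo_X$ in (\ref{eqz3}) to each summand. If $u_{|\Aa_2}$ were surjective its kernel would split $\Aa_2$, so $u_{|\Oo_X(a)}\ne 0$, forcing $a\le 0$; the case $a=0$ splits (\ref{eqz3}) and contradicts $h^1(\Gg)>0=h^1(\Ee)$; and $a<0$ forces $\Ff\subset\Aa_2$, after which comparing $\Hh\cong\Oo_X(a)\oplus(\Aa_2/\Ff)$ with the description of $\Hh$ as an extension of $\Oo_X$ by $\Ii_{Z,X}$ yields a contradiction. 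You should replace the Hom-count sketch with this filtration-and-restriction analysis.
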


\begin{proof}
Since $\omega _X\cong \Oo _X(m-4)$, we have $h^0(\Oo _X(4-m)\otimes \omega _X)=1$ and $\Oo _X(4-m)\otimes \omega _X$ is globally generated. Since $\Oo _X(4-m)\otimes \omega _X$ is globally generated, we have $h^0(\Ii _{p,X}\otimes \Oo _X(4-m)\otimes \omega _X)=0$ for all $p\in Z_{\mathrm{red}}$. Since $Z$ is a locally complete intersection, the Cayley-Bacharach condition is satisfied and so there is a locally free $\Gg$ fitting into (\ref{eqz2}); see \cite{cat}. From (\ref{eqz2}) we immediately get $h^1(\Gg (t)) =0$ for all $t>0$, because $Z$ is not collinear. Note that $\det (\Gg ) \cong \Oo _X(m-4)$ and $\Gg$ is a vector bundle of rank two. This implies $\Gg ^\vee \cong \Gg (4-m)$. For $t<0$, we have $h^1(\Gg (t)) = h^1(\Gg ^\vee (-t)\otimes \omega _X) = h^1(\Gg (-t)) =0$ by Serre's duality. Now consider the coboudnary map $\delta _1: H^1(\Ii _{Z,X}) \rightarrow H^2(\Oo _X(m-4))\cong \mathbf{k}$ with $\mathrm{ker}(\delta _1) =H^1(\Gg)$. The dual of $\delta_1$ is the map 
$$\mathrm{Hom}_X(\Oo_X(m-4), \Oo_X(m-4)) \to \mathrm{Ext}_X^1(\Ii_{Z,X}, \Oo_X(m-4)) $$
sending the identity map to the element corresponding to $\Gg$. This implies that $\delta_1$ is surjective and $h^1(\Gg)=1$. 

Now we apply Lemma \ref{z1} to $\Gg$ to obtain an aCM vector bundle $\Ee$ of rank three fitting into (\ref{eqz3}). Since $h^1(\Gg )=1$ and $h^1(\Ee )=0$, (\ref{eqz2}) and (\ref{eqz3}) give $h^0(\Ee )=h^0(\Gg )=\binom{m-1}{3}$. Assume the existence of integers $a_1\ge a_2\ge a_3$ such that $\Ee \cong \oplus_{i=1}^3 \Oo _X(a_i)$. Since $\det (\Ee )\cong \Oo _X(m-4)$, we have $a_1+a_2+a_3 =m-4$. If $2 \le m\le 3$, then we have $a_1\ge 0$ from $a_1+a_2+a_3 =m-4$. This implies that $h^0(\Oo _X(a_1)) > 0 =\binom{m-1}{3}=h^0(\Ee)$, a contradiction. If $m=4$, then we have $h^0(\Ee )=1$. Since $a_1+a_2+a_3 =0$, we have $\sum _{i=1}^{3} h^0(\Oo _X(a_i)) >1$, a contradiction. Finally assume $m>4$. From (\ref{eqz2}) and (\ref{eqz3}) we see that $\Oo _X(m-2)$ is the first non-trivial sheaf in the HN filtration of $\Ee$. Thus $a_1 =m-4$ and $h^0(\Oo _X(a_1)) =\binom{m-1}{3}$. Since $a_2+a_3=0$, we have $h^0(\Oo _X(a_2)) >0$ and so $h^0(\Ee )> \binom{m-1}{3}$, a contradiction. Hence we get $\Ee \ncong \oplus_{i=1}^3\Oo_X(a_i)$ for any triple of integers $(a_1, a_2, a_3)$. 

It remains to show the last assertion. Assume $\mathrm{Pic}(X) \cong\ZZ\langle \Oo _X(1)\rangle$ and that $\Ee$ is decomposable; by the previous assertion we have $\Ee \cong \Aa_1 \oplus \Aa_2$ with $\mathrm{rank}(\Aa_i ) =i$ for each $i$ and $\Aa_2$ indecomposable. Set $\Aa_1 \cong \Oo _X(a)$ for $a\in \ZZ$. Since $h^0(\Ee )=\binom{m-1}{3}$, we have $a\le m-4$. From (\ref{eqz2}) and (\ref{eqz3}) we get the existence of a subsheaf $\Ff \subset \Ee$ such that $\Ff \cong \Oo _X(m-4)$ and $\Ee /\Ff$ is an extension $\Hh$ of $\Oo _X$ by $\Ii _{Z,X}$. Note that $\Hh$ is not locally free, because $\Ii _{Z,X}$ has not depth $2$. In particular, $\Hh$ is not isomorphic to $\Aa_2$ and we get $\Aa_1 \ncong \Ff$. So we have $a<m-4$. Now consider a restriction map
$$u_{|\{0\}\oplus \Aa_2} : \{0\}\oplus \Aa_2 \to \Oo_X.$$
If this restriction map is surjective, then its kernel is a line bundle, say $\Oo_X(b)$. Since $X$ is aCM, we get $\Aa_2 \cong \Oo_X\oplus \Oo_X(b)$, a contradiction. Thus the restriction map is not surjecitve and so the other restriction map $u_{|\Aa_1 \oplus \{0\}} $ is not zero. In particular, we get $a\le 0$. If $a=0$, then we have $\Aa_1 \cong \Oo_X$ and the map $u_{|\Aa_1 \oplus \{0\}}$ is an isomorphism. Thus (\ref{eqz3}) splits and we get $h^1(\Ee )\ge h^1(\Gg )>0$, a contradiction. Hence we get $a<0$. Since there is no nonzero map $\Ff \rightarrow \Aa_1$ from $a<m-4$, $\Ff$ is isomorphic to a subsheaf $\Ff_1$ of $\Aa_2$ and we get $\Hh \cong \Oo_X(a)\oplus \Aa_2/\Ff_1$. From $a<0$ we see that there is no nonzero map $\Ii_{Z,X} \rightarrow \Oo_X(a)$. Since $\Hh$ is an extension of $\Oo_X$ by $\Ii_{Z,X}$, we get that $\Ii_{Z,X} \cong \Aa_2/\Ff_1$ and so $\Oo_X(a)\cong \Oo_X$, a contradiction. 
\end{proof}

\begin{remark}
In case $m=1$, i.e. $X=\PP^2$, we fail in obtaining an indecomposable aCM vector bundle of rank three, using the method in Proposition \ref{z2}. Indeed, we get $\Gg \cong \Omega_{\PP^2}^1$ and the corresponding  vector bundle of rank three is $\Ee \cong \Oo _{\PP^2}(-1)^{\oplus 3}$.
\end{remark}

\begin{corollary}
Let $X\subset \PP^3$ be union of multiple planes in which at least one plane occurs with multiplicity $1$. Then there is an indecomposable aCM vector bundle of rank three on $X$. If $m>4$, we have a family of such aCM vector bundles of dimension $6$.
\end{corollary}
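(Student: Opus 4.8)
The plan is to deduce the corollary from Proposition~\ref{z2}: the multiplicity‑one component provides the subscheme $Z$ needed to run that construction, and the Picard‑group hypothesis of the proposition will be verified directly for a union of planes. First I would fix a plane $H$ that occurs in $X$ with multiplicity one, so $H\cong\PP^2$; the locus $U\subset H$ of points of $H$ lying on no other component of $X_{\mathrm{red}}$ is nonempty, open and contained in $X_{\mathrm{reg}}$, so in particular $X_{\mathrm{reg}}\ne\emptyset$. I would then pick a general set $Z\subset U$ of three points: it is not collinear, and since each of its points is a smooth point of $X$ (so $\Oo_{X,p}$ is regular of dimension two with maximal ideal generated by two elements) it is a local complete intersection in $X$. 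Proposition~\ref{z2} applied to $Z$ then yields an aCM vector bundle $\Ee$ of rank three sitting in a filtration $\Oo_X(m-4)\subset\Gg\subset\Ee$ with successive quotients $\Ii_{Z,X}$ and $\Oo_X$ (this is (\ref{eqz2}) together with (\ref{eqz3})), and with $\Ee\ncong\bigoplus_{i=1}^3\Oo_X(a_i)$ for every $(a_1,a_2,a_3)\in\ZZ^3$.

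To obtain indecomposability I would prove that $\mathrm{Pic}(X)\cong\ZZ\langle\Oo_X(1)\rangle$ and then invoke the last assertion of Proposition~\ref{z2}. The Picard statement I would establish by induction on the number of planes via the Mayer–Vietoris sequence for the closed cover of $X$ by one component together with the union of the others: two distinct planes of $\PP^3$ always meet along a line $\cong\PP^1$, on which the restriction of a line bundle has a well‑defined degree, and comparing these degrees forces a line bundle on $X$ to have one and the same degree on every component; the connecting maps on global units vanish because every piece is connected; and passing from $X_{\mathrm{red}}$ to $X$ costs nothing, since the graded pieces of the nilradical (for the filtration of $1+\mathcal{N}$ by powers of the nilradical $\mathcal{N}$) are twists of structure sheaves of subschemes of $\PP^2$'s, for which $H^1$ and $H^2$ vanish in the relevant degrees. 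I expect this Picard computation — and in particular the bookkeeping for the nilpotents when several planes carry higher multiplicity — to be the main obstacle; the reduced case is a routine Mayer–Vietoris argument.

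For the last assertion I would assume $m>4$ and let $\Lambda\subset\mathrm{Sym}^3U$ be the open subvariety of non‑collinear triples, an integral quasi‑projective variety of dimension $6$; as every $Z\in\Lambda$ is a non‑collinear local complete intersection in $X$, the construction above, carried out over $\Lambda$, gives a flat family $\{\Ee_Z\}_{Z\in\Lambda}$ of indecomposable aCM vector bundles of rank three. It then suffices to show $\Ee_Z\cong\Ee_{Z'}\Rightarrow Z=Z'$. Twisting (\ref{eqz2}) and (\ref{eqz3}) by $4-m$ and using $H^0(\Oo_X(4-m))=0$ gives $h^0(\Ee_Z(4-m))=h^0(\Oo_X)=1$, so the inclusion $\Oo_X(m-4)\hookrightarrow\Ee_Z$ is unique up to a scalar and its cokernel $\Hh_Z:=\Ee_Z/\Oo_X(m-4)$ is an extension of $\Oo_X$ by $\Ii_{Z,X}$; such an extension splits locally, so its non‑locally‑free locus is the (reduced) support of $Z$. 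Since $Z$ is reduced it is recovered from $\Ee_Z$, the members of $\{\Ee_Z\}_{Z\in\Lambda}$ are pairwise non‑isomorphic, and we get the claimed $6$‑dimensional family.
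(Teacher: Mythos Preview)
Your approach is essentially the paper's: take three general points in the multiplicity-one plane and apply Proposition~\ref{z2}. The only differences are that the paper obtains $\mathrm{Pic}(X)\cong\ZZ\langle\Oo_X(1)\rangle$ by citing \cite[Lemma~2.5]{BHMP} rather than redoing the Mayer--Vietoris/nilradical argument, and for the $6$-dimensional family the paper simply records that the parameter space of such $Z$ has dimension $6$, whereas your recovery of $Z$ from $\Ee_Z$ via $h^0(\Ee_Z(4-m))=1$ supplies the pairwise non-isomorphism explicitly.
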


\begin{proof}
Assume that $X$ has one component $H$ with multiplicity $1$. In this case we take as $Z$ a set of $3$ general points in $H$. Then the first assertion follows from Proposition \ref{z2}. Note that the set of all such $Z$ has dimension $6$. Now assume that $X$ has a component $H$ with multiplicity $3$. Fix a general point $p\in H$ and take a general line $L\subset \PP^3$ with $p\in L$. Then set $Z$ to be the connected component of the scheme $X\cap L$ with $p$ as its reduction. Then we may get the assertion from Proposition \ref{z2} and that $\mathrm{Pic}(X) \cong \ZZ \langle \Oo _X(1)\rangle$ by \cite[Lemma 2.5]{BHMP}. 
\end{proof}

\begin{proposition}\label{z2+0}
Let $X\subset \PP^3$ be a surface of degree $m\ge 4$ with an irreducible component $Y$ appearing with multiplicity $2$ in $X$. Fix $p\in Y_{\mathrm{reg}}$ so that $T$ is the only irreducible component of $X$ containing $p$. For a general line $L\subset \PP^3$ containing $p$, let $Z\subset X$ be the connected component of $L\cap X$ with $p$ as its reduction. We have $\deg (Z)=2$ and there is an aCM  vector bundle $\Ee_Z$ of rank two fitting into an exact sequence
\begin{equation}\label{eqz2+0}
0 \to \Oo _X(m-4)\to \Ee _Z\to \Ii _{Z,X}\to 0.
\end{equation}
The set of all isomorphism classes of $\Ee _Z$ is uniquely parametrized by a $4$-dimensional irreducible quasi-projective variety $\Delta$ satisfying the following. 
\begin{itemize}
\item [(i)] For any $\Ee _Z\in \Delta$, there are no integers $a, b$ with $\Ee _Z \cong \Oo _X(a)\oplus \Oo _X(b)$.
\item [(ii)] A very general $\Ee _Z\in \Delta$ is indecomposable.
\item [(iii)] If $\mathrm{Pic}(X) \cong \ZZ \langle \Oo _X(1)\rangle $, then each $\Ee _Z\in \Delta$ is indecomposable.
\item [(iv)] If $\ZZ \langle \Oo _X(1)\rangle $ are the only aCM line bundles on $X$, then each $\Ee _Z\in \Delta$ is indecomposable.
\end{itemize} 
\end{proposition}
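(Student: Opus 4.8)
The plan is to imitate the rank-two constructions of Propositions~\ref{e1}, \ref{z2} and \ref{a1.1}; the only genuinely new input is a local analysis of $X$ near $p$. First I would record that, since $p\in Y_{\mathrm{reg}}$ and $Y$ is the only component of $X$ through $p$, there are formal coordinates $u,v,w$ at $p$ with $Y=\{u=0\}$ locally and $\Oo_{X,p}\cong\mathbf{k}[[u,v,w]]/(u^2)$. A general line $L\ni p$ is transverse to $Y$ at $p$, so after a coordinate change $L=\{v=w=0\}$ near $p$, whence the connected component $Z$ of $L\cap X$ through $p$ has $\Oo_{Z,p}\cong\mathbf{k}[[u]]/(u^2)$: thus $\deg(Z)=2$, and the ideal of $Z$ in $\Oo_{X,p}$ is $(v,w)$, generated by two elements, so $Z$ is a local complete intersection of codimension two in $X$. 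Along the way I would note $h^0(\Ii_{Z,X})=0$ (the only section of $\Oo_X$ is a nonzero constant), that $h^1(\Ii_{Z,X})=1$ since $\dim H^0(\Oo_X)=1<2=\dim H^0(\Oo_Z)$, and --- using a linear form vanishing at $p$ but not on $L$ --- that the restriction $H^0(\Oo_X(t))\to H^0(\Oo_Z)$ is surjective for $t\ge 1$, that is $h^1(\Ii_{Z,X}(t))=0$ for $t\ge1$.

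Next, following the proof of Proposition~\ref{z2}: $X$ is Gorenstein with $\omega_X\cong\Oo_X(m-4)$, so $\mathrm{Ext}^1_X(\Ii_{Z,X},\omega_X)\cong H^1(\Ii_{Z,X})^\vee\cong\mathbf{k}$ by Serre duality, and the Cayley-Bacharach condition for an extension of shape (\ref{eqz2+0}) is with respect to $\omega_X\otimes\Oo_X(4-m)\cong\Oo_X$, hence vacuous; so by \cite{cat} the (up to isomorphism) unique nontrivial extension (\ref{eqz2+0}) has a locally free middle term $\Ee_Z$ of rank two, with $\det\Ee_Z\cong\Oo_X(m-4)$ and therefore $\Ee_Z^\vee\cong\Ee_Z(4-m)$. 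Then I would verify $\Ee_Z$ is aCM by the usual chase on (\ref{eqz2+0}) using $h^1(\Oo_X(t))=0$: it gives $h^1(\Ee_Z(t))=h^1(\Ii_{Z,X}(t))=0$ for $t\ge1$; surjectivity of the coboundary $H^1(\Ii_{Z,X})\to H^2(\omega_X)$ (equivalent to nontriviality of the extension) gives $h^1(\Ee_Z)=0$; and $h^1(\Ee_Z(t))=h^1(\Ee_Z(-t))$ by Serre duality together with $\Ee_Z^\vee\cong\Ee_Z(4-m)$ settles $t<0$. As $\Ee_Z$ is locally free, it is aCM.

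Then I would construct $\Delta$. Let $B$ be the set of admissible pairs $(p,L)$; it is open and dense in the $\PP^2$-bundle over the open dense locus of points of $Y$ lying on no other component of $X$, hence irreducible of dimension $2+2=4$, and $(p,L)\mapsto Z$ is injective. The Serre construction being relative, the $\Ee_Z$ form a flat family over (an open subset of) $B$. To see the classifying map is injective on isomorphism classes, twist (\ref{eqz2+0}) by $\Oo_X(4-m)$: since $h^0(\Ii_{Z,X}(4-m))=0$ one gets $h^0(\Ee_Z(4-m))=h^0(\Oo_X)=1$, and --- exactly as in Proposition~\ref{e1} --- the zero-scheme of the essentially unique nonzero section of $\Ee_Z(4-m)$ is $Z$; hence $\Ee_Z\cong\Ee_{Z'}$ forces $Z=Z'$. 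This identifies the isomorphism classes of the $\Ee_Z$ with $B$, so $\Delta$ is a $4$-dimensional irreducible quasi-projective variety. For (i), a splitting $\Ee_Z\cong\Oo_X(a)\oplus\Oo_X(b)$ with $a\ge b$ forces $a+b=m-4$ and, from $h^0(\Ee_Z(4-m))=1$, $(a,b)=(m-4,0)$, which is excluded because $h^0(\Ee_Z)=\binom{m-1}{3}$ by (\ref{eqz2+0}) and $h^0(\Ii_{Z,X})=0$, while $h^0(\Oo_X(m-4)\oplus\Oo_X)=\binom{m-1}{3}+1$. For (iii) and (iv), a decomposition of the locally free rank-two sheaf $\Ee_Z$ is $\Ee_Z\cong\Aa_1\oplus\Aa_2$ with the $\Aa_i$ line bundles --- aCM line bundles in case (iv), being summands of an aCM sheaf --- hence each $\Aa_i\cong\Oo_X(t)$ under the respective hypothesis, contradicting (i). For (ii), a decomposition forces $\Ee_Z\cong\Aa_1\oplus(\Aa_1^\vee\otimes\Oo_X(m-4))$, and $h^1(\Oo_X)=0$ makes $\mathrm{Pic}(X)$ countable, so there are only countably many possible isomorphism classes of decomposable $\Ee_Z$; by the injectivity above, the set of $Z\in\Delta$ with $\Ee_Z$ decomposable is countable, so --- $\Delta$ being irreducible of positive dimension over the uncountable field $\mathbf{k}$ --- a very general $\Ee_Z\in\Delta$ is indecomposable.

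The hard part will be the local analysis at $p$: making the picture of $X$ and of $Z$ precise enough to guarantee $\deg(Z)=2$ and the local complete intersection property, and pinning down which open condition on $L$ is needed (transversality to $Y$ at $p$) for both this and the surjectivity of $H^0(\Oo_X(t))\to H^0(\Oo_Z)$ for $t\ge1$. The remaining work is bookkeeping --- checking that $B$ is genuinely $4$-dimensional and irreducible and that distinct admissible pairs $(p,L)$ yield non-isomorphic bundles, so $\dim\Delta=4$ --- while the cohomological vanishings are routine once one follows the template of Propositions~\ref{e1} and \ref{z2}.
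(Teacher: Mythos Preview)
Your proposal is correct and follows essentially the same route as the paper: transversality of $L$ to $T_pY$ for $\deg(Z)=2$ and the lci property, Cayley--Bacharach for local freeness, the duality/coboundary chase for aCM, the $4$-dimensional incidence variety of pairs $(p,L)$ for $\Delta$, and the section count $h^0(\Ee_Z(4-m))=1$ to recover $Z$ from $\Ee_Z$. Your local analysis at $p$ and your argument for (i) (comparing $h^0(\Ee_Z)=\binom{m-1}{3}$ with $h^0(\Oo_X(m-4)\oplus\Oo_X)$) are in fact more explicit than the paper's, which simply asserts the transversality condition and gives a terser version of the splitting contradiction.
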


\begin{proof}
Since no other component of $X$ than $Y$ contains $p$ and $p$ is a smooth point of $X$, we have $\deg (Z)=2$; it is sufficient to take as $L$ any line through $p$ not contained in the tangent plane $T_pY$ of $Y$ at $p$. 

Since $\omega _X\cong \Oo _X(m-4)$, we have $h^0(\Oo _X(4-m)\otimes \omega _X)=1$ and $\Oo _X(4-m)\otimes \omega _X$ is globally generated. Thus we have $h^0(\Ii _{p,X}\otimes \Oo _X(4-m)\otimes \omega _X)=0$. Since $Z$ is a locally complete intersection, the Cayley-Bacharach condition is satisfied for (\ref{eqz2+0}) and so there is a locally free $\Ee_Z$ fitting into (\ref{eqz2+0}); see \cite{cat}. 

Since $\Oo _X(1)$ is very ample and $\deg (Z) =2$, we get $h^1(\Ee_Z (t)) =0$ for all $t>0$ by (\ref{eqz2}). Note that $\det (\Ee_Z ) \cong \Oo _X(m-4)$ and $\Ee_Z$ is a vector bundle of rank two. This implies $\Ee_Z ^\vee \cong \Ee_Z (4-m)$. For $t<0$, we have $h^1(\Ee_Z (t)) = h^1(\Ee_Z ^\vee (m-t-4)) = h^1(\Ee_Z (-t)) =0$ by Serre's duality. Now consider the coboudary map $\delta _1: H^1(\Ii _{Z,X}) \rightarrow H^2(\Oo _X(m-4))\cong \mathbf{k}$ with $\mathrm{ker}(\delta _1) =H^1(\Ee_Z)$. The dual of $\delta_1$ is the map 
$$\mathrm{Hom}_X(\Oo_X(m-4), \Oo_X(m-4)) \to \mathrm{Ext}_X^1(\Ii_{Z,X}, \Oo_X(m-4)) $$
sending the identity map to the element corresponding to $\Ee_Z$. This implies that $\delta_1$ is non-zero and hence and $h^1(\Ee_Z)=0$. Thus $\Ee_Z$ is aCM.

The set of all $p\in Y_{\mathrm{reg}}$ such that $Y$ is the only irreducible component of $X$ containing $p$ is an irreducible $2$-dimensional variety $\Delta '$. For each $p\in \PP^3$ the set of all lines through $p$ is a $\PP^2$. Define a variety $\Delta$ as follows:
$$\Delta:=\{(p,L)~|~p\in \Delta' \text{ and }L \text{ a line in }\PP^3 \text{ with }p\in L \text{ and } L\nsubseteq T_pY\}.$$
Since $m\ge 4$, we have $h^0(\Ii _{Z,X}(4-m)) =0$. Thus (\ref{eqz2+0}) gives $h^0(\Ee _Z(4-m)) =1$. Thus the isomorphism classes of $\Ee _Z$ uniquely determines $Z$, i.e. if $\Ee _Z\ncong \Ee _{Z'}$, then we get $Z\ne Z'$. For two elements $(p_1, L_1), (p_2, L_2)\in \Delta$, let $Z_i$ be the subscheme of degree $2$ determined by $(p_i, L_i)$ for each $i=1,2$. Since each $p_i$ is the reduction of $Z_i$ and $L_i$ is the line spanned by $Z_i$, the variety $\Delta$ uniquely parametrizes the isomorphism classes of the aCM vector bundles $\Ee_Z$.

Assume $\Ee_Z \cong \Oo _X(a)\oplus \Oo _X(b)$ for some integers $a, b$ with $a\ge b$. Since $\det (\Ee_Z ) \cong \Oo _X(m-4)$, we have $b= m-4-a$. But since $h^0(\Ee_Z (4-m)) =1$, the only possibility is that $a=4-m$ and $b<0$, a contradiction. Thus we get (i). We may get (ii) as in the proof of Theorem \ref{e2}. Now assume that $\Ee_Z$ is decomposable, say $\Ee_Z \cong \Aa_1 \oplus \Aa_2$ with each $\Aa_i$ a line bundle. Since $\Ee_Z$ is aCM, each $\Aa_i$ is also aCM. Thus (iii) and (iv) follow from (i).
\end{proof}

\begin{remark}
In case $m=2$, i.e. $X=2H$ the double plane with a hyperplane $H\subset \PP^3$, the vector bundle $\Ee _Z$ described in Proposition \ref{z2+0} is the vector bundle $\Oo _X(-1)^{\oplus 2}$.
\end{remark}

\begin{theorem}\label{i1}
Let $X\subset \PP^3$ be a surface of degree $m\ge 4$ with $X_{\mathrm{reg}} \ne \emptyset$, i.e. $X$ has an irreducible component $Y$ appearing with multiplicity $1$.  We further assume that either $\mathrm{Pic}(X) =\ZZ\langle \Oo _X(1)\rangle $ or $X$ is integral. For a fixed integer $s>0$ and a set $S\subset X_{\mathrm{reg}}\cap Y$ with $\sharp (S)=s$, a general sheaf $\Ee _S$ fitting into an exact sequence
\begin{equation}\label{eqi1}
0 \to \Oo _X(m-3)^{\oplus s} \stackrel{v}{\to} \Ee _S \to \oplus _{p\in S}\Ii_{p,X}\to 0, 
\end{equation}
is a locally free, indecomposable and aCM sheaf of rank $2s$. Moreover, if $S'\subset X_{\mathrm{reg}}\cap Y$ is another set with $\sharp (S')=s$ and $S'\ne S$, then we have $\Ee _{S'}\ncong \Ee _S$.
\end{theorem}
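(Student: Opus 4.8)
The plan is to prove Theorem \ref{i1} by induction on $s$, the case $s=1$ being Proposition \ref{a1.1}. Suppose the statement holds for $s-1$; fix a general set $S\subset X_{\mathrm{reg}}\cap Y$ with $\sharp(S)=s$, choose a point $p\in S$, and set $S'' = S\setminus\{p\}$. For a general $\Ee_{S''}$ fitting into \eqref{eqi1} with $s-1$ in place of $s$ — which by the inductive hypothesis is aCM, locally free of rank $2(s-1)$, indecomposable, and whose isomorphism class determines $S''$ — the key is to combine it with the rank-two bundle $\Ee_p$ of Proposition \ref{a1.1}. More precisely, I would like to realize a general $\Ee_S$ as a suitable extension or elementary modification built from $\Ee_{S''}$ and $\Ee_p$, so that \eqref{eqi1} for $S$ is recovered. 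The technical heart is to show that a \emph{general} sheaf $\Ee_S$ fitting into \eqref{eqi1} is again locally free (each summand $\Ii_{p,X}$ is a locally complete intersection at the smooth point $p$, so the Cayley–Bacharach/Serre construction of \cite{cat} applies summand by summand once the extension class is general enough), and that its intermediate cohomology vanishes. The aCM property follows as in Propositions \ref{a1.1} and \ref{z2}: from \eqref{eqi1} one gets $h^1(\Ee_S(t))=s\cdot h^1(\Ii_{p,X}(t))$ twisted appropriately minus the rank of the relevant coboundary map; since $h^1(\Oo_X(t))=0$ and the restriction $H^0(\Oo_X(t))\to \oplus_{p\in S}H^0(\Oo_X(t)|_{\{p\}})$ is surjective for $t\ge 0$ (the points of $S$ impose independent conditions, which holds for $S$ general, or can be arranged directly), we get $h^1(\Ee_S(t))=0$ for $t\ge 0$; for $t<0$ use $\det\Ee_S\cong \Oo_X(s(m-4))$, hence $\Ee_S^\vee\cong\Ee_S\otimes\Oo_X(-\text{(twist)})$ up to the rank, together with Serre duality, exactly as before. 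Actually the cleanest route is to observe $\Ee_S$ is a successive extension involving the $\Ee_p$'s, each of which is aCM, and aCM-ness is preserved under extensions, so only local freeness and indecomposability require real work.

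For the \textbf{separation statement} $\Ee_{S'}\ncong\Ee_S$ when $S\ne S'$: from \eqref{eqi1} and $h^0(\Oo_X(3-m))=0$ (since $m\ge4$) we get $h^0(\Ee_S(3-m))=s$, and moreover the zero schemes of sections of $\Ee_S(3-m)$ recover $S$. Indeed applying $\Hom_X(-,\Oo_X(m-3))$ or directly computing, the image of $H^0(\Oo_X(m-3)^{\oplus s}(3-m))=H^0(\Oo_X)^{\oplus s}$ in $H^0(\Ee_S(3-m))$ is all of it, and a general section vanishes on a single point of $S$; thus $S$ is intrinsically determined by $\Ee_S$, as in the end of the proof of Proposition \ref{z2+0}. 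Hence distinct $S$ give non-isomorphic bundles, and the family is parametrized by (an open subset of) the $s$-fold symmetric product of $X_{\mathrm{reg}}\cap Y$, which is integral of dimension $2s = r$, matching the statement of Theorem \ref{thth}.

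The \textbf{main obstacle} is indecomposability of a general $\Ee_S$. Here I expect to use the monodromy argument advertised in the introduction together with Lemma \ref{x1} (stated later in the paper, which I am allowed to invoke): the family $\{\Ee_S\}$ is irreducible, so if a general member decomposed as $\Aa\oplus\Bb$ with $0<\mathrm{rank}(\Aa)<2s$, the ranks of the factors would be constant along the family and the monodromy of the parameter space would act on the set of direct summands; one then derives a contradiction by specializing $S$ to a configuration where the bundle is forced to be ``as indecomposable as possible'' — for instance specializing all but one point together, so that $\Ee_S$ degenerates to something built from $\Ee_{S''}$ (indecomposable of rank $2(s-1)$ by induction) and $\Ee_p$ (indecomposable of rank $2$), and no compatible splitting survives. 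Concretely, I would argue that any sub-line-bundle or, more generally, any direct summand that is a sum of line bundles is ruled out by the Harder–Narasimhan/stability analysis of Lemma \ref{c2} and Remark \ref{c3} (every $\Oo_X(t)$ and every $\Ii_{Z,X}$ is stable, and \eqref{eqi1} is close to an HN filtration), exactly mirroring the rank-two and rank-three cases; the remaining possibility of a splitting into two pieces of intermediate rank is killed by a dimension count — $\hom_X(\Ee_S,\Ee_S)$ is forced to be small (computed from \eqref{eqi1} via $\Hom_X(\Ee_S,-)$ and $\Hom_X(-,\oplus\Ii_{p,X})$, giving something like $s + s^2\cdot\binom{m}{3}$ or similar), whereas a nontrivial decomposition into $\Gg$-many summands would make $\hom_X(\Ee_S,\Ee_S)\ge$ the number of summands, and for a general (hence simple-enough) member this inequality fails. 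I would carry the induction so that at each step only the ``new'' point $p$ needs to be analyzed, reducing the general decomposition problem to interactions between $\Ee_p$ and $\Ee_{S''}$, which are controlled by $\hom_X(\Ee_p,\Ee_{S''})$ and $\hom_X(\Ee_{S''},\Ee_p)$; bounding these is the final routine-but-delicate computation.
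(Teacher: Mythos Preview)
Your outline gets the easy parts right: local freeness and the aCM property do follow by semicontinuity from the split case $\oplus_{p\in S}\Ee_p$, and the separation $\Ee_S\ncong\Ee_{S'}$ is indeed because \eqref{eqi1} is the Harder--Narasimhan filtration of $\Ee_S$ (the paper uses exactly this). The indecomposability argument, however, has two genuine gaps.

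\textbf{The case $s=2$ needs a separate argument.} Your induction and monodromy/degeneration plan both fail here. If a general $\Ee_S$ with $|S|=2$ decomposes, then by Lemma~\ref{x1} it decomposes as $\Ee_p\oplus\Ee_q$, and your proposed degeneration lands exactly on this split bundle --- no contradiction. The paper instead proves (Lemma~\ref{ppo}) that $\mathrm{ext}^1_X(\Ee_q,\Ee_p)\ge 2$ via a Riemann--Roch computation of $\chi(\Ee_p\otimes\Ee_p^\vee)$, and then shows (Claim~3 in the proof) that a nonzero class in $\Ext^1_X(\Ee_q,\Ee_p)$ gives an indecomposable $\Ee_S$. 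This step is unavoidable and you have not supplied it.

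\textbf{For $s>2$, the monodromy mechanism is a swap, not a degeneration.} Your proposal to ``specialize all but one point together'' either leaves the locus of $s$ distinct points (so the inductive hypothesis does not apply) or lands on a bundle that is still an extension of $\Ee_p$ by $\Ee_{S''}$ --- which is what every $\Ee_S$ already is, so nothing is gained. Moreover, if the putative decomposition has factors of ranks $2$ and $2(s-1)$, your specialization is perfectly compatible with it. The paper's argument is different: by Lemma~\ref{x1} a decomposition of a general $\Ee_S$ corresponds to a \emph{specific} partition $S=A\sqcup(S\setminus A)$, and this subset $A$ (not merely its cardinality) is locally constant on the chosen irreducible component of the idempotent locus $\Gamma'(S)$. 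One then takes $p\in A$, $q\in S\setminus A$, and a path in $Y_{\mathrm{reg}}$ that exchanges $p$ and $q$ while keeping $S$ fixed as a set; at the endpoint the partition attached to the same irreducible component is $A_q=(A\setminus\{p\})\cup\{q\}\ne A$, and since $s>2$ the two partitions $\{A,S\setminus A\}$ and $\{A_q,S\setminus A_q\}$ are genuinely different, contradicting uniqueness of the decomposition (Krull--Schmidt plus Lemma~\ref{x1}).

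Finally, the dimension count you propose does not work: $\hom_X(\Ee_S,\Ee_S)$ is large (already $\hom_X(\Ee_p,\Ee_p)=1+\binom{m}{3}$ by Claim~1 of Lemma~\ref{ppo}), so the crude bound ``number of summands $\le\hom$'' yields nothing.
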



We have $\mathrm{ext}_X^1(\Ii_{p,X}, \Oo_X(m-3))=h^1(\Ii_{p,X}(-1))=1$ for each $p\in X_{\mathrm{reg}}$ by Serre's duality. So the extension $\Ee_S$ corresponds to an element in a finite dimensional vector space
$$\mathbb{E}(S):=\mathrm{Ext}_X^1(\oplus_{p\in S}\Ii_{p,X}, \Oo_X(m-3)^{\oplus s})\cong \mathbf{k}^{s^2}.$$
If $s=1$, say $S=\{p\}$, the dimension of $\mathbb{E}$ is one. Thus there exists a unique non-trivial extension. Denote this non-trivial extension simply by $\Ee_p$. 

In Theorem \ref{i1}, a ``general'' choice of $\Ee_S$ means that there exists a non-empty Zariski open subset $\mathbb{U}\subset \mathbb{E}(S)$ such that the middle term of any extension in $\mathbb{U}$ is aCM, locally free and indecomposable.

\begin{proof}[Proof of Theorem \ref{thth}:]
The family $\Sigma$ of all $S\subset X_{\mathrm{reg}}$ with $\sharp (S) =s$ clearly has dimension $2s$. By Theorem \ref{i1}, if $S$ and $S'$ are two distinct sets in $\Sigma$, then we get $\Ee_{S} \ncong \Ee_{S'}$. Now there is a universal family on any $\mathrm{Ext}^1$-group of families of sheaves with $\Sigma \times X$ as its base. Thus, we get a family of aCM locally free and indecomposable vector bundles with as a parameter space a rank $s^2$ vector bundle over $\Sigma$; the fibre of this vector bundle over $S\in \Sigma$ is $\mathbb{E}(S)$, corresponding to $S$. Taking a non-empty open subset $V$ of $\Sigma$ on which this vector bundle is trivial we get a family of pairwise non-isomorphic sheaves, at least if we restrict $V$, so that all sheaves in the family are locally free, aCM and indecomposable.
\end{proof}

\begin{remark}
For a surface $X$ as in Theorem \ref{i1} and Theorem \ref{thth}, the algebraic group $\mathrm{Aut}(X)$ has finite dimension; it is often zero-dimensional. Hence there exists an integer $t_0$ such that for every even integer $r$, $X$ has a family of dimension at least $r-t_0$, consisting of indecomposable aCM vector bundles of rank $r$ on $X$, such that for any two distinct elements $\Ee$, $\Ee'$ in the family there is no $f\in \mathrm{Aut}(X)$ with $f^\ast (\Ee )\cong \Ee'$.
\end{remark}


\section{Proof of Theorem \ref{i1}}\label{sec4}

Set $\mathbb{E}'(S)$ to be the set of all elements in $\mathbb{E}(S)$ whose corresponding middle term is locally free and aCM.

\begin{lemma}\label{x01}
$\mathbb{E}'(S)$ is a non-empty open subset of $\mathbb{E}(S)$.
\end{lemma}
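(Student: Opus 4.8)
The plan is to prove Lemma \ref{x01} in two stages, treating non-emptiness and openness separately. For \textbf{non-emptiness}, I would build one explicit element of $\mathbb{E}'(S)$ by taking the direct sum of the rank-two bundles $\Ee_p$ from Proposition \ref{a1.1}: since $\mathbb{E}(S)=\mathrm{Ext}^1_X(\oplus_{p\in S}\Ii_{p,X},\Oo_X(m-3)^{\oplus s})$ contains the ``diagonal'' extension class obtained by pairing the $p$-th summand of the target with the $p$-th summand of the source via the unique nontrivial class in $\mathrm{Ext}^1_X(\Ii_{p,X},\Oo_X(m-3))$, the corresponding middle term is $\oplus_{p\in S}\Ee_p$. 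This sheaf is locally free (a finite direct sum of the locally free $\Ee_p$) and aCM (a finite direct sum of aCM sheaves is aCM, since the defining cohomology vanishings and the local depth condition are additive). Hence $\mathbb{E}'(S)\ne\emptyset$.

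For \textbf{openness}, I would use semicontinuity applied to the flat family of extensions over $\mathbb{E}(S)$. Concretely, the universal extension over $\mathbb{E}(S)\times X$ gives a coherent sheaf $\mathcal{F}$ flat over $\mathbb{E}(S)$ whose fibre over $e\in\mathbb{E}(S)$ is the middle term $\Ee$ of the extension corresponding to $e$. Local freeness of the fibre is an open condition on the base (the locus where a coherent sheaf flat over a reduced base is locally free is open — equivalently, the fibre dimension of $\mathcal{F}$ is upper semicontinuous and, being a fixed constant $2s$ on the nonempty open locus where it is a vector bundle, the complement is closed). For the aCM condition: every fibre $\Ee$ automatically has depth at least one at each point (it sits in an extension of torsion-free-ish sheaves; more precisely $\Oo_X(m-3)^{\oplus s}$ has pure depth two and $\oplus_p\Ii_{p,X}$ has positive depth, so the middle term has positive depth), hence by Remark \ref{rremm} the local Cohen-Macaulay condition (i) of Definition \ref{deff} is automatic for \emph{every} $e\in\mathbb{E}(S)$. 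So the only remaining constraint is the intermediate-cohomology vanishing $h^1(\Ee(t))=0$ for all $t\in\ZZ$ (here $\dim X=2$, so only $i=1$ matters). From the defining sequence \eqref{eqi1} twisted by $\Oo_X(t)$, together with $h^1(\Oo_X(t))=h^2(\Oo_X(t-(m-3)))$-type control and $h^1(\Ii_{p,X}(t))=0$ for $t\ge 0$ and $h^1(\Ii_{p,X}(t))=h^1(\Oo_X(t))=0$ for $t\ll 0$, one sees that $h^1(\Ee(t))$ can be nonzero only for $t$ in a fixed finite range independent of $e$; thus aCM-ness is equivalent to finitely many conditions $h^1(\Ee(t))=0$, each an open condition by semicontinuity of $t\mapsto h^1(\mathcal{F}_e(t))$ over $\mathbb{E}(S)$. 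A finite intersection of open sets is open, so $\mathbb{E}'(S)$ is open, and combined with the first stage it is a nonempty open subset.

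The \textbf{main obstacle} I anticipate is making the reduction ``$h^1(\Ee(t))=0$ for all $t$'' to ``$h^1(\Ee(t))=0$ for finitely many $t$'' uniform in the parameter $e\in\mathbb{E}(S)$ — i.e. pinning down the bounded range of twists $t$ outside of which $h^1(\Ee(t))$ automatically vanishes for \emph{all} extensions simultaneously. This follows from \eqref{eqi1} because $\Oo_X(m-3)^{\oplus s}$ and $\oplus_{p\in S}\Ii_{p,X}$ each already satisfy $h^1$ of their twists vanishing outside a fixed finite set of $t$'s (for the ideal sheaves, the long exact sequence of $0\to\Ii_{p,X}\to\Oo_X\to\Oo_{\{p\}}\to0$ shows $h^1(\Ii_{p,X}(t))=0$ for $t\ge 0$, and Serre duality on the aCM scheme $X$ handles $t\ll 0$), so the snake/long exact sequence squeezes $h^1(\Ee(t))$ into that same finite range regardless of the extension class. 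Once this uniform bound is in place, the rest is routine semicontinuity, and the lemma follows.
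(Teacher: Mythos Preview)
Your proposal is correct and follows essentially the same approach as the paper: non-emptiness via the direct sum $\bigoplus_{p\in S}\Ee_p$ from Proposition~\ref{a1.1}, and openness via the standard fact that local freeness and the aCM property are open in a flat family. The paper's proof simply asserts the openness without spelling out the reduction to finitely many $h^1$-vanishings via semicontinuity, so your version is a more detailed rendering of the same argument.
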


\begin{proof}
Since being locally free and aCM are both open properties in a flat family, $\mathbb{E}'(S)$ is an open subset of $\mathbb{E}(S)$. Thus it is sufficient to prove that $\mathbb{E}'(S)  \ne \emptyset$. Proposition \ref{a1.1} gives the case $s=1$. For $s>1$, we may find a direct sum of aCM vector bundles of rank two fitting into (\ref{eqi1}), i.e. take $\oplus _{p\in S} \Ee_p$. This implies $\mathbb{E}'(S) \ne \emptyset$.
\end{proof}

\begin{remark}\label{x00} 
In the set-up of (\ref{eqi1}) set $\Aa := v(\Oo _X(m-3)^{\oplus s})$. By Lemma \ref{c2} and Remark \ref{c3} together with the assumption $m\ge 3$, we see that $\Aa$ is the first term of the HN filtration of $\Ee_S$. Thus we get $f(\Aa)\subseteq \Aa$ for any $f\in \End (\Ee_S )$.
\end{remark}

\begin{lemma}\label{x0}
If $\Ee$ is the middle term of an extension $\epsilon \in \mathbb{E}'(S)$, then $\Ee$ has no line bundle as a factor.
\end{lemma}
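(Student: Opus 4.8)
The plan is to argue by contradiction: suppose $\Ee$ splits off a line bundle, so $\Ee \cong \Ll \oplus \Ff$ with $\Ll \cong \Oo_X(t)$ for some $t \in \ZZ$ and $\Ff$ a vector bundle of rank $2s-1$. First I would pin down the possible value of $t$. Since $\Ee$ fits into (\ref{eqi1}), it is aCM with $\det(\Ee) \cong \Oo_X(s(m-3))$ and $h^0(\Ee(3-m)) = s$, $h^0(\Ee(2-m)) = 0$ (read off from the long exact sequence, using $h^0(\Ii_{p,X}(2-m)) = h^0(\Ii_{p,X}(3-m)) = 0$ since $m \ge 4$). By Remark \ref{x00}, the subsheaf $\Aa = v(\Oo_X(m-3)^{\oplus s})$ is the first term of the HN filtration of $\Ee$, so any line-bundle summand $\Oo_X(t)$ must satisfy $t \le m-3$, with the summands of slope $m-3$ contributing a direct summand of $\Aa$. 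Combined with the constraint on $h^0$ in degrees $3-m$ and $2-m$, the only way a line bundle can split off with $t = m-3$ forces $\Oo_X(m-3)$ itself to be a summand of $\Ee$; and a summand with $t < m-3$ would have to be built from the quotient part, which I will rule out as in the rank-three argument of Proposition \ref{z2}.

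The cleanest route, I think, is to show directly that $\Oo_X(m-3)$ cannot be a direct summand and that no $\Oo_X(t)$ with $t \le 0$ can be either, exhausting all cases via the HN/Hilbert-polynomial bookkeeping above. For the first: if $\Ee \cong \Oo_X(m-3) \oplus \Ff$, then applying $\Hom_X(-, \Oo_X(m-3))$ and $\Hom_X(\Oo_X(m-3), -)$ to (\ref{eqi1}) and using that $\Aa$ is characteristic (Remark \ref{x00}) one gets that the composition $\Oo_X(m-3) \into \Ee \onto \oplus_{p\in S}\Ii_{p,X}$ is zero, so $\Oo_X(m-3)$ lands inside $\Aa \cong \Oo_X(m-3)^{\oplus s}$; then the induced splitting would exhibit $\oplus_{p \in S}\Ii_{p,X}$ as a quotient of $\Oo_X(m-3)^{\oplus(s-1)} \oplus (\text{l.f. of rank } 2s-1 - \text{something})$ in a way incompatible with $\det$ and $h^0$, or more simply, it would mean the extension class $\epsilon$ lies in a proper subspace of $\mathbb{E}(S)$ where one coordinate of the $s \times s$ block of Ext-classes vanishes — but for a \emph{general} $\epsilon \in \mathbb{E}'(S)$ all $s \times s$ entries of the extension are "nondegenerate", so no splitting of the form $\Oo_X(m-3) \oplus \Ff$ occurs. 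This is the place I would lean on the word "general" in the statement of Theorem \ref{i1}: the locus of $\epsilon$ whose middle term has a line-bundle factor is a constructible subset of $\mathbb{E}'(S)$, and it suffices to produce one $\epsilon$ (e.g. perturb $\oplus_{p\in S}\Ee_p$) whose middle term has no such factor, since then the complement is a nonempty open set.

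For line-bundle summands $\Oo_X(t)$ with $t \le 0$: here I would use that $\Ee$ is aCM, so $\Oo_X(t)$ must be aCM, and then invoke the stability of all line bundles (Remark \ref{c3}, valid under either hypothesis on $X$) together with the surjection $\Ee \onto \oplus_{p\in S}\Ii_{p,X}$; the composite $\Oo_X(t) \to \oplus_{p\in S}\Ii_{p,X}$, if nonzero, would give a nonzero map into some $\Ii_{p,X} \subset \Oo_X$, hence $t \le 0$, and if it is zero then $\Oo_X(t) \subset \Aa \cong \Oo_X(m-3)^{\oplus s}$ forces $t \le m-3$ but also makes $\Oo_X(t)$ a summand of $\Aa$, impossible for $t < m-3$. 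Running the two cases against the numerical identities $h^0(\Ee(3-m)) = s$, $h^0(\Ee(2-m)) = 0$, $\det(\Ee) = \Oo_X(s(m-3))$ kills every possibility. The main obstacle will be the bookkeeping in the mixed case — when the putative complement $\Ff$ itself has rank $> 1$ and its own HN pieces interleave with $\Aa$ — so the cleanest formulation is probably to first establish that $f(\Aa) \subseteq \Aa$ for all $f \in \End(\Ee)$ (Remark \ref{x00}), deduce that any idempotent $e \in \End(\Ee)$ cutting out a line-bundle summand restricts to an idempotent of $\Aa \cong \Oo_X(m-3)^{\oplus s}$ and descends to one of the quotient $\oplus_{p\in S}\Ii_{p,X}$, and then show that for general $\epsilon$ the only idempotents compatible with both are $0$ and the one that would split off a full $\Oo_X(m-3)$ — which is then excluded by the $h^0$ count. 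I expect this endomorphism-algebra argument, rather than case-by-case Hilbert polynomials, to be the spine of the proof.
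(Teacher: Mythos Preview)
Your proposal has a genuine gap, and it stems from missing the one clean observation that drives the paper's argument. You try to exclude $\Oo_X(m-3)$ as a summand either by an $h^0$ count or by retreating to ``general $\epsilon$'', but neither works: the $h^0$ identities $h^0(\Ee(3-m))=s$, $h^0(\Ee(2-m))=0$ are perfectly compatible with $\Ee \cong \Oo_X(m-3)\oplus \Gg$, and the lemma is stated for \emph{every} $\epsilon\in\EE'(S)$, not just a general one, so appealing to genericity proves a weaker statement than required (and the later proof of Theorem \ref{i1} uses this lemma for specific, possibly non-general, factors of $\Ee$).

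The missing idea is to use aCM-ness of the complement. If $\Ee\cong\Ll\oplus\Gg$ with $h^0(\Ll(3-m))>0$, then the HN filtration forces $\Ll\cong\Oo_X(m-3)$ (in the $\mathrm{Pic}(X)=\ZZ\langle\Oo_X(1)\rangle$ case via degree; in the integral case because $\Ll$ must be a stable factor of $\Aa$). Then $\Gg$ sits in an extension
\[
0\to \Oo_X(m-3)^{\oplus(s-1)}\to \Gg \to \oplus_{p\in S}\Ii_{p,X}\to 0,
\]
and twisting by $-1$ gives $h^1(\Gg(-1))\ge s - (s-1)\cdot h^2(\Oo_X(m-4)) = s-(s-1)=1$, since $h^1(\Ii_{p,X}(-1))=1$ for each $p$ and $h^2(\Oo_X(m-4))=1$. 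So $\Gg$ is not aCM, contradicting that it is a direct summand of the aCM bundle $\Ee$. The remaining case $h^0(\Ll(3-m))=0$ is handled as you indicate: then $\Aa\subset\{0\}\oplus\Gg$, forcing $\Ll$ to be a factor of $\oplus_{p\in S}\Ii_{p,X}$, which is absurd since those are not locally free. Note also that in the integral case you should not presuppose $\Ll\cong\Oo_X(t)$; the HN argument handles arbitrary line bundles directly.
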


\begin{proof}
Assume that $\Ll$ is a line bundle that is a factor of $\Ee$, i.e. $\Ee = \Ll \oplus \Gg$ for some aCM vector bundle $\Gg$ of rank $2s-1$. Since $m\ge 3$, we have 
$$h^0(\Ll (3-m)) +h^0(\Gg (3-m)) =h^0(\Ee(3-m))=s.$$ 
First assume $h^0(\Ll (3-m)) =0$ and $h^0(\Gg (3-m)) =s$. Then we have $v(\Oo_X(m-3)^{\oplus s}) \subset \{0\}\oplus \Gg$ in (\ref{eqi1}) and so $\Ll \cong \Ii_{p,X}$ for some $p\in S$, a contradiction. Thus we have $h^0(\Ll (3-m)) >0$ and so $h^0(\Gg (3-m)) <s$. In particular, there is a nonzero map $u: \Oo _X(m-3) \rightarrow \Ll$. Assume for the moment that $\mathrm{Pic}(X) \cong \ZZ\langle \Oo _X(1)\rangle$ and write $\Ll \cong \Oo _X(a)$ for some $a\in \ZZ$. The map $u$ gives $a\ge m-3$. Since $m\ge 3$, (\ref{eqi1}) is the HN-filtration of $\Ee$ and we get $a=m-3$. Thus $\Gg$ fits into an exact sequence
$$0\to \Oo_X(m-3)^{\oplus (s-1)} \to \Gg \to \oplus _{p\in S}\Ii _{p,X}\to 0. $$ 
Then we get $h^1(\Gg(-1))\ge 1$ from $h^1(\Ii_{p,X}(-1))=1$ and $h^2(\Oo_X(m-4))=1$. Thus $\Gg$ is not aCM, a contradiction. If $X$ is integral, then every line bundle is stable and so (\ref{eqi1}) is the HN-filtration of $\Ee$, we get either $\Ll \cong \Oo _X(m-3)$; we get a contradiction as above, or $\Ll $ is a factor of $\oplus _{p\in S} \Ii _{p,X}$, which is not locally free, a contradiction.
\end{proof}



Let $\FF (S)$ (resp. $\FF '(S)$) be the set of isomorphism classes of middle terms of extensions in $\EE (S)$ (resp. $\EE '(S)$). Let us denote by $\Ee=\Ee(\epsilon)$ the middle term of the extension corresponding to $\epsilon \in \EE'(S)$. 

\begin{lemma}\label{x2}
For two non-empty finite sets $S_1, S_2\subset X_{\mathrm{reg}}$ with $\sharp (S_i)=s_i$, take $\Ee_i\in \FF'(S_i)$ and call $\Aa_i$ the subsheaf of $\Ee_i$ isomorphic to $\Oo_X(m-3)^{\oplus s_i}$ for each $i=1,2$. If there exists a map $f: \Ee_1 \rightarrow \Ee_2$ with $f(\Ee_1) \subset \Aa_2$, then we have $S_1 \cap S_2\ne \emptyset$. 
\end{lemma}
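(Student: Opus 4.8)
The plan is to push $f$ down to a morphism between the Harder--Narasimhan quotients of $\Ee_1$ and $\Ee_2$ and then reduce everything to the single vanishing $\Hom_X(\Ii_{p,X},\Ii_{q,X})=0$ for $p\neq q$. First I would record the Harder--Narasimhan picture: as in Remark \ref{x00} (via Lemma \ref{c2}, Remark \ref{c3} and $m\geq 4$), for each $i$ the subsheaf $\Aa_i\cong\Oo_X(m-3)^{\oplus s_i}$ is the first term of the Harder--Narasimhan filtration of $\Ee_i$, viewed as a pure two-dimensional sheaf on $\PP^3$; it is saturated, since the quotient $\Ee_i/\Aa_i\cong\bigoplus_{p\in S_i}\Ii_{p,X}$ is pure of dimension two (each $\Ii_{p,X}$ is a subsheaf of $\Oo_X$, hence has no embedded components), and every Harder--Narasimhan factor of $\bigoplus_{p\in S_i}\Ii_{p,X}$ has normalized Hilbert polynomial strictly smaller than that of $\Oo_X(m-3)$ for $t\gg 0$. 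Hence the composite $\Aa_1\hookrightarrow\Ee_1\xrightarrow{f}\Ee_2\twoheadrightarrow\Ee_2/\Aa_2$ is a morphism from a semistable sheaf to one all of whose Harder--Narasimhan factors have strictly smaller normalized Hilbert polynomial, so it vanishes; therefore $f(\Aa_1)\subseteq\Aa_2$ and $f$ induces
$$\bar f\colon\ \bigoplus_{p\in S_1}\Ii_{p,X}\ \longrightarrow\ \bigoplus_{q\in S_2}\Ii_{q,X}.$$
The role of the hypothesis on $f$ is exactly to ensure $\bar f\neq 0$, so I would then pick $p\in S_1$ and $q\in S_2$ for which the $(q,p)$-component $\Ii_{p,X}\to\Ii_{q,X}$ of $\bar f$ is nonzero.

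It then remains to show that $\Hom_X(\Ii_{p,X},\Ii_{q,X})=0$ whenever $p\neq q$, for this forces $p=q$, whence $p\in S_1\cap S_2$ as claimed. I would apply $\Hom_X(-,\Ii_{q,X})$ to $0\to\Ii_{p,X}\to\Oo_X\to\Oo_p\to 0$: since $p\neq q$ the stalk $(\Ii_{q,X})_p=\Oo_{X,p}$ has depth two, so $\Hom_X(\Oo_p,\Ii_{q,X})=0$ and, via the local-to-global spectral sequence together with $\mathcal{E}xt^1_{\Oo_{X,p}}(\mathbf{k},\Oo_{X,p})=0$, also $\Ext^1_X(\Oo_p,\Ii_{q,X})=0$; hence $\Hom_X(\Ii_{p,X},\Ii_{q,X})\cong H^0(\Ii_{q,X})$, which vanishes because $h^0(\Oo_X)=1$ and a nonzero constant does not vanish at $q$.

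The one step that needs care is the first: one must check that the semistability description of Lemma \ref{c2} and Remark \ref{x00} applies to every $\Ee_i\in\FF'(S_i)$, and not only to the distinguished $\Ee_{S_i}$ of (\ref{eqi1}), so that an arbitrary morphism $\Ee_1\to\Ee_2$ is forced to respect the Harder--Narasimhan filtrations. Granted that, the remainder is the short depth-and-$\Hom$ computation above, which is the same mechanism already used in Proposition \ref{a1.1}.
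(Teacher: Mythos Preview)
Your proof is correct and follows essentially the same route as the paper: show $f(\Aa_1)\subseteq\Aa_2$, deduce a nonzero induced map on the quotients $\bigoplus_{p\in S_1}\Ii_{p,X}\to\bigoplus_{q\in S_2}\Ii_{q,X}$, and conclude via $\Hom_X(\Ii_{p,X},\Ii_{q,X})=0$ for $p\neq q$. Two minor remarks: the paper gets $f(\Aa_1)\subseteq\Aa_2$ by the one-line observation $\Hom_X(\Oo_X(m-3),\Ii_{p,X})=H^0(\Ii_{p,X}(3-m))=0$ (since $m\ge 4$), rather than invoking the full Harder--Narasimhan machinery; and your closing worry is unfounded, since every $\Ee_i\in\FF'(S_i)$ is by definition the middle term of an extension (\ref{eqi1}), so Remark~\ref{x00} applies verbatim.
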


\begin{proof}
Since $\Hom_X (\Oo _X(m-3),\Ii _{p,X}) =0$ for all $p\in X$, we have $f(\Aa _1)\subseteq \Aa _2$. In particular, $f$ induces a nonzero map $\tilde{f}: \oplus _{p\in S_1}\Ii _{p,X} \rightarrow \oplus _{q\in S_2} \Ii _{q,X}$. This implies that $S_1\cap S_2 \ne \emptyset$.
\end{proof}

\begin{lemma}\label{x1}
Assume that $\Ee\in \FF'(S)$ is decomposable; $\Ee \cong  \Ee _1\oplus \cdots \oplus \Ee _h$ with each $\Ee _i$ indecomposable. Then there is a partition $S = \sqcup _{i=1}^{h} S_i$ with $\Ee _i\in \FF '(S_i)$ for each $i$. If there is another decomposition $\Ee \cong \Ee _1'\oplus \cdots \oplus \Ee _k'$ with each $\Ee_j'$ indecomposable, then we get $k=h$ and there is a permutation $\sigma : \{1,\dots ,h\}\rightarrow\{1,\dots ,h\}$ such that $\Ee _{\sigma (i)}' \cong \Ee _i$ for all $i$ and $\Ee _{\sigma (i)}'\in \FF (S_{\sigma
(i)})$.
\end{lemma}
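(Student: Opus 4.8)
The plan is to exploit the rigidity of the HN filtration of $\Ee$ together with Lemma \ref{x2}. First I would recall from Remark \ref{x00} that, since $m\ge 3$ and each line bundle $\Oo_X(m-3)$ is stable of maximal slope (Lemma \ref{c2}, Remark \ref{c3}), the subsheaf $\Aa := v(\Oo_X(m-3)^{\oplus s})$ is the first step of the HN filtration of $\Ee$, hence canonical: every endomorphism of $\Ee$ carries $\Aa$ into $\Aa$, and likewise for the decomposition $\Ee\cong \bigoplus_{i=1}^h \Ee_i$ the induced projections preserve $\Aa$. Writing $\Aa_i := \Aa\cap \Ee_i$ and using that $\Hom_X(\Oo_X(m-3),\Ii_{p,X})=0$, one sees $\Aa=\bigoplus_i \Aa_i$ with each $\Aa_i$ a direct sum of copies of $\Oo_X(m-3)$, say $\Aa_i\cong \Oo_X(m-3)^{\oplus s_i}$, and $\sum s_i=s$. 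Dividing the decomposition of $\Ee$ by $\Aa$ gives $\bigoplus_{p\in S}\Ii_{p,X}\cong \bigoplus_i (\Ee_i/\Aa_i)$; since each $\Ii_{p,X}$ is indecomposable (Remark \ref{c3}) and the $\Ii_{p,X}$ are pairwise non-isomorphic for distinct $p$, the Krull--Schmidt-type uniqueness for this rank-one part yields a partition $S=\sqcup_i S_i$ with $\sharp(S_i)=s_i$ and $\Ee_i/\Aa_i\cong \bigoplus_{p\in S_i}\Ii_{p,X}$. This exhibits $\Ee_i$ as the middle term of an extension in $\EE(S_i)$; since $\Ee_i$ is a direct summand of the locally free aCM sheaf $\Ee$, it is itself locally free and aCM, so in fact $\Ee_i\in \FF'(S_i)$, which also forces each $S_i$ to be non-empty.

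For the second part I would take the two decompositions $\Ee\cong \bigoplus_{i=1}^h \Ee_i\cong \bigoplus_{j=1}^k \Ee_j'$ and run the standard Krull--Schmidt argument, using that $\End_X(\Ee_i)$ and $\End_X(\Ee_j')$ are local rings because $\Ee_i,\Ee_j'$ are indecomposable coherent sheaves of finite type on a projective scheme (finite-dimensional endomorphism algebras, so indecomposable implies local). Composing inclusions and projections $\Ee_i\to \Ee\to \Ee_j'$ and back, one obtains as usual an isomorphism $\Ee_i\cong \Ee_{\sigma(i)}'$ for a suitable bijection $\sigma$, with $k=h$. Finally, applying the first part to the decomposition $\{\Ee_j'\}$ produces a partition $S=\sqcup_j S_j'$ with $\Ee_j'\in \FF'(S_j')$; from $\Ee_{\sigma(i)}'\cong \Ee_i$ and the fact, already noted in Proposition \ref{z2+0}-type arguments, that the isomorphism class of such a middle term determines the set of its base points (because $h^0(\Ee_S(3-m))=s$ singles out $v(\Oo_X(m-3)^{\oplus s})$ and the quotient $\bigoplus_{p\in S}\Ii_{p,X}$ recovers $S$), one gets $S_j' = S_{\sigma^{-1}(j)}$; equivalently $\Ee_{\sigma(i)}'\in \FF(S_{\sigma(i)})$ after relabelling, as claimed.

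I expect the main obstacle to be the bookkeeping in the first paragraph: justifying carefully that the HN-canonical subsheaf $\Aa$ splits compatibly with the direct sum decomposition of $\Ee$, i.e. that $\Aa=\bigoplus_i(\Aa\cap\Ee_i)$ with each intersection a free $\Oo_X(m-3)$-module, and then matching the rank-one quotients $\Ii_{p,X}$ correctly. The point is that $\Aa\cap \Ee_i$ is saturated in $\Ee_i$ with torsion-free locally free quotient, and any subsheaf of $\Ee_i$ that is a successive extension of copies of the stable bundle $\Oo_X(m-3)$ and is saturated with respect to slope must itself be $\Oo_X(m-3)^{\oplus s_i}$; and the decomposition of $\bigoplus_{p\in S}\Ii_{p,X}$ into indecomposables is unique up to reordering because these ideal sheaves are indecomposable with local endomorphism rings and pairwise non-isomorphic (distinct reduced points). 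Once this compatibility is in hand, everything else is the formal Krull--Schmidt machinery plus the already-established fact that the middle term determines its defining set $S$.
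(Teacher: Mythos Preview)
Your overall strategy matches the paper's: use the canonical HN subsheaf $\Aa\cong\Oo_X(m-3)^{\oplus s}$, intersect it with the summands $\Ee_i$, pass to the quotient to obtain a partition of $S$, and then invoke Krull--Schmidt (Atiyah) together with the fact that the isomorphism class of $\Ee_i$ determines $S_i$ via its HN quotient. The second half of your argument is essentially the paper's.

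There is, however, a genuine gap in your first paragraph. From the splitting $\Aa=\bigoplus_i\Aa_i$ with $\Aa_i\cong\Oo_X(m-3)^{\oplus s_i}$ and the Krull--Schmidt decomposition of $\bigoplus_{p\in S}\Ii_{p,X}\cong\bigoplus_i(\Ee_i/\Aa_i)$, you obtain a partition $S=\sqcup_i S_i$ with $\Ee_i/\Aa_i\cong\bigoplus_{p\in S_i}\Ii_{p,X}$, but Krull--Schmidt on the quotient does \emph{not} give you $\sharp(S_i)=s_i$: it only tells you which $\Ii_{p,X}$'s occur in each $\Ee_i/\Aa_i$, not that the number of them equals the rank of $\Aa_i$. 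A priori one could have $s_i<\sharp(S_i)$ for some $i$ (and then $s_j>\sharp(S_j)$ for some other $j$), and then $\Ee_i$ would \emph{not} lie in $\FF'(S_i)$, since the definition of $\EE(S_i)$ requires the kernel to have rank exactly $\sharp(S_i)$. Your sentence ``since $\Ee_i$ is a direct summand of the locally free aCM sheaf $\Ee$, it is itself locally free and aCM, so in fact $\Ee_i\in\FF'(S_i)$'' assumes the conclusion.

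The paper closes this gap by a cohomological count: if $s_i<\sharp(S_i)$ for some $i$, then twisting the extension
\[
0\to \Oo_X(m-3)^{\oplus s_i}\to \Ee_i\to \bigoplus_{p\in S_i}\Ii_{p,X}\to 0
\]
by $\Oo_X(-1)$ and using $h^1(\Ii_{p,X}(-1))=1$, $h^2(\Oo_X(m-4))=1$ gives $h^1(\Ee_i(-1))\ge \sharp(S_i)-s_i>0$, contradicting the aCM-ness of the summand $\Ee_i$. You have all the ingredients for this (you already note $\Ee_i$ is aCM), but you attribute the equality $\sharp(S_i)=s_i$ to the wrong mechanism. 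Once this step is inserted, your argument is complete and coincides with the paper's; note also that non-emptiness of each $S_i$ then follows, since $s_i=\sharp(S_i)=0$ would force $\Ee_i=0$ (alternatively, use Lemma~\ref{x0} to exclude a line-bundle summand directly).
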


\begin{proof}
We use induction on $s$. The case $s=1$ is true, because each $\Ee _p$ for $p\in X_{\mathrm{reg}}$ is indecomposable by Proposition \ref{a1.1}. Since $\Ee$ is aCM by the definition of $\FF (S)$, each $\Ee _i$ is also aCM. We consider the subsheaf $\Aa \cong \Oo_X(m-3)^{\oplus s}\subset \Ee$ as in Remark \ref{x00} and set $\Gg _i:= \Aa \cap \Ee_i$. Since the HN filtration of $\Ee$ is obtained from the ones of each factors, we have 
$$\Aa \cong \oplus _{i=1}^{h} \Gg _i \phantom{AA}\text{and}\phantom{AA}\oplus _{p\in S}\Ii_{p,X}\cong \oplus_{i=1}^h \Ee _i/\Gg _i.$$
By Lemma \ref{x0} we have $\Gg _i\subsetneq \Ee _i$ for all $i$. By Remark \ref{c3} we may write $S = \sqcup _{i=1}^{h} S_i $ with $\Ee_i/\Gg _i\cong \oplus _{p\in S_i}\Ii_{p, X}$. Since $\Ee _i/\Gg _i \ne 0$, we have $S_i\ne \emptyset$ for all $i$. Thus the set $\{S_1,\dots ,S_h\}$ gives a partition of $S$. To prove the first part of the lemma it is sufficient to prove that $\sharp (S_i) =\mathrm{rank}(\Gg _i)/2$ for all $i$. If this is not true, then there is $i\in \{1,\dots ,h\}$ with $\sharp (S_i) > \mathrm{rank}(\Gg _i)/2$, i.e. $\mathrm{rank}(\Gg _i\cap \Aa)> \sharp (S_i)$. The exact sequence
$$0 \to \Aa\cap \Gg _i\to \Gg _i\to \oplus _{p\in S_j}\Ii _{p,X}\to 0$$ 
gives $h^1(\Gg _i)\ge \sharp (S_i)-\mathrm{rank}(\Gg _i\cap\Aa) >0$. In particular, $\Gg _i$ is not aCM, a contradiction.

Now we check the last assertion of the lemma. Take two partitions 
$$S = S_1\sqcup \cdots \sqcup S_h= S_1'\sqcup \cdots \sqcup S_k'$$ 
such that there is a decomposition 
$$\Ee \cong  \Ee _1\oplus \cdots \oplus \Ee _h \cong  \Ee _1'\oplus \cdots \oplus \Ee _k'$$ 
with $\Ee_i\in \FF'(S_i) $ and $\Ee _j'\in \FF'(S_j')$ indecomposable. By the Krull-Schmidt theorem in \cite{Atiyah}, we get $h=k$ and there is a permutation $\sigma : \{1,\dots ,h\}\rightarrow \{1,\dots ,h\}$ such that $\Bb _{\sigma (i)} \cong \Ee _i$ for all $i$. By renaming $\{\Ee _1',\dots ,\Ee_h'\}$, we may assume that $\Ee _i' \cong \Ee _i$ for all $i$. This implies
$$\sharp (S_i)=\mathrm{rank}(\Ee_i)/2=\mathrm{rank}(\Ee_i')/2=\sharp(S_i').$$
Now fix an isomorphism $f_i: \Ee _i\rightarrow \Ee _i'$ for each $i$. Since (\ref{eqi1}) gives the HN filtrations of $\Ee _i$ and $\Ee_i'$, the map $f$ induces an isomorphism $\tilde{f_i}: \oplus _{p\in S_i}\Ii _{p,X}\rightarrow  \oplus _{p\in S_i'} \Ii _{p,X}$. Since $p$ is the unique point of $X$ at which $\Ii _{p,X}$ is not locally free, we get $S_i=S_i'$. For each $i$, let $\Aa_i$ be the unique subsheaf of $\Ee _i$ isomorphic to $\Oo _X(m-3)^{\sharp (S_i)}$. Then for any embedding $u: \Ee _i \rightarrow  \Ee _1\oplus \cdots \oplus \Ee _h$, the composition $v_j\circ \pi _j \circ u$
$$\Ee_i \stackrel{u}{\to} \Ee_1\oplus \cdots \oplus \Ee_h \stackrel{\pi_j}{\to} \Ee_j \stackrel{v_j}{\to} \oplus_{p\in S_j}\Ii_{p,X}$$
is zero for any $j\ne i$ by Lemma \ref{x2}, where $\pi _j: \Ee \rightarrow \Ee _j$ is the projection and $v_j: \Ee _j\rightarrow \oplus _{p\in S_j}\Ii _{p,X}$ is the surjection in (\ref{eqi1}) for $S_j$. Since $u$ is an embedding, we see that $v_i\circ \pi _i \circ u$ is surjective. Thus $\Gg:= \pi _i(u(\Ee _i))$ is a subsheaf with $v_i(\Gg ) = \oplus _{p\in S_i}\Ii _{p,X}$. 
\end{proof}

\begin{lemma}\label{ppo}
With the setting as in Theorem \ref{i1}, we have $\mathrm{ext}_X^1(\Ee_p, \Ee_q)\ge 2$ for two points $p,q\in X_{\mathrm{reg}}$, possibly $p=q$. 
\end{lemma}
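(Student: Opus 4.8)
The plan is to compute $\mathrm{ext}_X^1(\Ee_p,\Ee_q)$ by applying $\Hom_X(-,\Ee_q)$ and $\Hom_X(\Ee_p,-)$ to the defining sequence (\ref{eqb1}) for $\Ee_p$ and $\Ee_q$, reducing everything to cohomology of line bundles and of the ideal sheaves $\Ii_{p,X}$, $\Ii_{q,X}$. Concretely, first I would apply $\Hom_X(\Ee_p,-)$ to the sequence
\begin{equation*}
0\to \Oo_X(m-3)\to \Ee_q\to \Ii_{q,X}\to 0
\end{equation*}
to get a long exact sequence relating $\Ext^1_X(\Ee_p,\Ee_q)$ to $\Ext^1_X(\Ee_p,\Oo_X(m-3))$ and $\Ext^1_X(\Ee_p,\Ii_{q,X})$, together with the connecting map out of $\Hom_X(\Ee_p,\Ii_{q,X})$. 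Then each of $\Ext^\bullet_X(\Ee_p,\Oo_X(m-3))$ and $\Ext^\bullet_X(\Ee_p,\Ii_{q,X})$ is computed by applying $\Hom_X(-,\Oo_X(m-3))$ and $\Hom_X(-,\Ii_{q,X})$ to (\ref{eqb1}) for $\Ee_p$, again landing on cohomology of $\Oo_X(t)$, of $\Ii_{p,X}$, and of $\Ii_{p,X}\otimes\Ii_{q,X}$-type terms; Serre duality on the Gorenstein surface $X$ (with $\omega_X\cong\Oo_X(m-3)\otimes\Oo_X(-1)$, i.e. $\Oo_X(m-4)$) converts the $\Ext^2$'s into $\Hom$'s.

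The key numerics I expect to use: $\hom_X(\Ee_p,\Oo_X(m-3)) = h^0(\Ee_p) = \binom{m}{3}$ by Serre duality (as already computed in the proof of Proposition~\ref{a1.1}); $\hom_X(\Ee_p,\Ii_{q,X}) = \hom_X(\Ii_{p,X},\Ii_{q,X})$, which is $1$ if $p=q$ and otherwise still at least... here one must be slightly careful, but for $p\ne q$ a nonzero map $\Ii_{p,X}\to\Ii_{q,X}$ would have to be an isomorphism away from $\{p,q\}$ and one checks $\hom_X(\Ii_{p,X},\Ii_{q,X})\le 1$ with equality only at $p=q$, so generically it is $0$; and $\mathrm{ext}^1_X(\Ii_{p,X},\Oo_X(m-3)) = h^1(\Ii_{p,X}(-1)) = 1$. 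Feeding these into the two spectral-sequence-style long exact sequences, the Euler-characteristic bookkeeping should force $\mathrm{ext}^1_X(\Ee_p,\Ee_q)\ge 2$: the point is that $\hom_X(\Ee_p,\Ee_q)$ is ``too small'' (of the order $\binom{m}{3}$ plus a bounded correction) compared with $\hom_X(\Ee_p,\Oo_X(m-3)) + \hom_X(\Ee_p,\Ii_{q,X}) - \chi$-type terms, so the deficit is absorbed by $\Ext^1$. Alternatively, and perhaps more cleanly, I would combine the four-term sequences into an inequality
\begin{equation*}
\mathrm{ext}^1_X(\Ee_p,\Ee_q)\ \ge\ \mathrm{ext}^1_X(\Ee_p,\Ii_{q,X}) - \mathrm{ext}^1_X(\Ee_p,\Oo_X(m-3)) + \bigl(\hom_X(\Ee_p,\Ii_{q,X}) - \mathrm{im}\,\delta\bigr),
\end{equation*}
and then bound $\mathrm{ext}^1_X(\Ee_p,\Ii_{q,X})$ from below using $\mathrm{ext}^1_X(\Ii_{p,X},\Ii_{q,X})$ and $\mathrm{ext}^1_X(\Oo_X(m-3),\Ii_{q,X}) = h^1(\Ii_{q,X}(3-m))$, the latter being large (roughly $\binom{m}{3}$ minus a small number since $\Ii_{q,X}(3-m)$ has a big $h^1$ for $m\ge 4$).

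The main obstacle, I expect, is controlling the connecting/coboundary maps precisely enough: several of the long exact sequences have maps whose rank is not immediately forced by the dimension count, and getting the clean bound $\ge 2$ (rather than $\ge 1$) requires identifying at least two linearly independent classes in $\Ext^1_X(\Ee_p,\Ee_q)$. I anticipate producing one such class from the ``obvious'' non-split behaviour tied to $h^1(\Ii_{q,X}(3-m))\ne 0$, and a second one from the extension class of (\ref{eqb1}) for $\Ee_q$ itself pulled back along a suitable map $\Ee_p\to\Ii_{q,X}$ or, when $p=q$, from the identity component together with the nontrivial self-extension detected by $\mathrm{ext}^1_X(\Ii_{p,X},\Oo_X(m-3))=1$. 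So the crux is a careful diagram chase showing these two contributions are genuinely independent in $\Ext^1_X(\Ee_p,\Ee_q)$ and survive all the connecting maps; the rest is the routine Serre-duality and Euler-characteristic computation outlined above, using $m\ge 4$ crucially to keep the relevant $h^1$'s of twisted ideal sheaves nonzero.
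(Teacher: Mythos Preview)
Your proposal is a plan rather than a proof: the parts you flag as ``the crux'' and ``the main obstacle'' (controlling the connecting maps, exhibiting two independent extension classes) are precisely the content of the lemma, and you leave them unresolved. More seriously, there is a concrete numerical error that breaks the strategy as written. You assert that $\mathrm{ext}^1_X(\Oo_X(m-3),\Ii_{q,X}) = h^1(\Ii_{q,X}(3-m))$ is ``large (roughly $\binom{m}{3}$ minus a small number)''. But for $m\ge 4$ we have $h^0(\Oo_X(3-m))=h^1(\Oo_X(3-m))=0$, and the ideal-sheaf sequence of the point $q$ then gives $h^1(\Ii_{q,X}(3-m))=h^0(\Oo_q)=1$. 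So the term you were counting on to supply a large lower bound contributes only $1$; after the losses to the various connecting maps (which you yourself note you cannot control), your chain of inequalities no longer yields $\ge 2$.

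The paper takes a route that sidesteps the connecting-map bookkeeping entirely. It first observes that $\chi(\Ee_p^\vee\otimes\Ee_q)$ is a deformation invariant, so one may reduce to $X$ smooth and $p=q$; then Riemann--Roch for $\mathcal{E}nd(\Ff_p)$ (with $\Ff_p:=\Ee_p(3-m)$, so $c_1^2=m(m-3)^2$, $c_2=1$) gives an exact closed formula for $\chi$. The remaining work is to bound $h^0(\Ff_p\otimes\Ff_p^\vee)$ and $h^2(\Ff_p\otimes\Ff_p^\vee)=h^0(\Ff_p\otimes\Ff_p^\vee(m-4))$ from below by explicitly writing down enough endomorphisms: the identity together with compositions $\Ff_p\to\Ii_{p,X}(3-m)\to\Oo_X\to\Ff_p$ for $h^0$, and multiplication by $H^0(\Oo_X(m-4))$ together with maps factoring through $H^0(\Ff_p(2m-7))$ for $h^2$. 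Subtracting gives $h^1\ge 2$. If you want to rescue your long-exact-sequence approach, you would need to replace the incorrect $h^1(\Ii_{q,X}(3-m))$ estimate by a genuine Euler-characteristic computation and then still produce the $h^0$, $h^2$ bounds---at which point you are essentially redoing the paper's argument without the simplifying reduction to smooth $X$ and $p=q$.
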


\begin{proof}
Set $\Ff _o:= \Ee _o(3-m)$ for $o\in \{p,q\}$. Since $\Ext ^i_X(\Ee _p,\Ee _q) \cong \Ext_X ^i(\Ff _p,\Ff _q)$, we have $\chi (\Ee _p\otimes \Ee _q^\vee ) = \chi (\Ff _p\otimes \Ff _q^\vee)$. Since Euler's characteristic is constant in a flat family of vector bundles and $p,q\in X_{\mathrm{reg}}$, it is sufficient to compute $\chi (\Ff _p\otimes \Ff _q^\vee)$ when $X$ is smooth. Since a smooth surface in $\PP^3$ is connected, the same observation applied to a family of vector bundles on $X$ shows $\chi (\Ff _p\otimes \Ff _q^\vee) =\chi (\Ff _p\otimes \Ff _p^\vee)$.

We have an exact sequence
\begin{equation}\label{eqii1}
0 \to \Oo _X \stackrel{v}{\to} \Ff _p \stackrel{w}{\to} \Ii _{p,X}(3-m)\to 0
\end{equation}
with $\det (\Ff _p)\cong \Oo _X(3-m)$ and $c_2(\Ff _p)=1$. Since $X\subset \PP^3$ is a surface of degree $m$, we have $c_1(\Ff _p)^2 = m(m-3)^2$. By Riemann-Roch for $\mathcal{E}nd (\Ff _p)$, we have 
\begin{align*}
\chi (\mathcal{E}nd (\Ff _p ))& = c_1(\Ff _p)^2 -4c_2(\Ff _p) + 4\chi (\Oo _X) = m(m-3)^2 -4 +4\binom{m-1}{3} +4\\
&=\frac{1}{6}\left(10m^3-60m^2+98m-24\right).
\end{align*}
In particular, we have $\chi \sim \frac{5}{3}m^3$ for $m\gg 0$. Note that by Serre's duality we have $h^2(\Ff _p\otimes \Ff _p^\vee )=h^0(\Ff _p\otimes \Ff _p^\vee (m-4))$.

\quad \emph{Claim 1:} We have $\mathrm{hom}_X (\Ff _p,\Ff _p) =1+\binom{m}{3}$.

\quad \emph{Proof of Claim 1:} We have $\mathrm{hom}_X (\Ii _{p,X}(3-m),\Oo _X) =h^0(\Oo _X(m-3)) = \binom{m}{3}$ and any nonzero map $\Ii _{p,X}(3-m) \rightarrow \Oo _X$ induces an element in $\Hom_X(\Ff_p, \Ff_p)$ with rank one as the following composition:
$$\Ff_p \stackrel{w}{\to} \Ii_{p,X}(3-m) \to \Oo_X \stackrel{v}{\to} \Ff_p.$$
The vector space $\Hom_X (\Ff _p,\Ff _p)$ also contains the nonzero multiples of the identity map $\Ff _p\rightarrow \Ff _p$ and these maps have rank two. Thus we get $h^0(\Ff _p\otimes \Ff _p^\vee )\ge 1+\binom{m}{3}$. On the other hand, for any $f\in \Hom_X (\Ff _p,\Ff _p)$ we get $w\circ f \circ (v(\Oo _X)) \subseteq v(\Oo _X)$ from $h^0(\Ii _{p,X}(3-m)) =0$. Thus $w\circ f\circ v$ induces a map $f_1: \Oo _X\rightarrow \Oo_X$, which is induced by the multiplication by $c\in \mathbf{k}$. Hence $f-c{\cdot}\mathrm{Id} _{\Ff _p}$ is induced by a unique $g\in \Hom_X (\Ii _{p,X}(3-m),\Ff _p)$. Since $\Ff _p$ is locally free and $X$ is smooth at $p$, we have $\Hom_X (\Ii _{p,X}(3-m),\Ff _p) = H^0(\Ff _p(m-3))$. By
(\ref{eqii1}) we have $h^0(\Ff_p (m-3)) =\binom{m}{3}$ and so $\mathrm{hom}_X (\Ff _p,\Ff _p) \le 1+\binom{m}{3}$. \qed

 \quad \emph{Claim 2:} We have $\mathrm{hom}_X (\Ff _p,\Ff _p(m-4)) \ge \binom{2m-4}{3} +2\binom{m-1}{3} -\binom{m-4}{3}-1$.

\quad \emph{Proof of Claim 2:} For any $f \in \Hom_X (\Ff _p, \Ff _p(4-m))$, set $f_1:= f_{|v(\Oo _X)}$. Since $h^0(\Oo _X(-1)) =0$, we have $w\circ f_1 =0$ and so $f_1(v(\Oo _X)) \subset v(\Oo _X(m-4)))$. Take $f$ with $f_1\equiv 0$. Such a map $f$ is uniquely determined by an element in $\Hom_X (\Ii _{p,X}(3-m), \Ff _p(m-4))$ and the converse also holds. Since $\Ff _p(m-4)$ is locally free and $X$ is smooth at $p$, we have $\Hom_X (\Ii _{p,X}(3-m), \Ff _p(m-4)) = \Hom_X (\Oo _X(3-m),\Ff _p(m-4)) = H^0(\Ff _p(2m-7))$. Since $h^1(\Oo _X(t)) =0$ for any $t\in \ZZ$, (\ref{eqii1}) gives 
$$h^0(\Ff _p(2m-7)) = h^0(\Oo _X(2m-7)) +h^0(\Oo _X(m-4))-1 = \binom{2m-4}{3} -\binom{m-4}{3}+\binom{m-1}{3}-1.$$ 
Note that a map $f$ obtained by a composition
$$\Ff_p \stackrel{w}{\to} \Ii_{p,X}(3-m) \to \Oo_X(m-4) \stackrel{v}{\to} \Ff_p(m-4)$$
has $f_1 \equiv 0$. Now for any linear subspace $W\subset \Hom_X (\Ff _p,\Ff _p(m-4))$ such that $f_1\not\equiv 0$ for any $f\in W\setminus \{0\}$, we would get 
$$\mathrm{hom}_X (\Ff _p,\Ff _p(m-4)) \ge \binom{2m-4}{3} -\binom{m-4}{3}+\binom{m-1}{3}-1+\dim W.$$
We may choose $W$ to consist of the compositions of the identity map $\Ff _p\rightarrow \Ff _p$ with the multiplication by an element of $H^0(\Oo _X(m-4))$. Then we have $\dim W=\binom{m-1}{3}$.   \qed

Combining Claims 1 and 2, we get
\begin{align*}
h^0(\Ff _p\otimes \Ff _p^\vee) +h^2(\Ff _p\otimes \Ff _p^\vee )&\ge \binom{2m-4}{3} +\binom{m}{3}+2\binom{m-1}{3}-\binom{m-4}{3}\\
&=\frac{1}{6}\left( 10m^3-60m^2+98m-12\right).
\end{align*}
Thus we have 
$$
h^1(\Ff_p\otimes \Ff_p^\vee)=h^0(\Ff_p\otimes \Ff_p^\vee)+h^2(\Ff_p\otimes \Ff_p^\vee)-\chi(\mathcal{E}nd(\Ff_p))\ge 2
$$
and so we get the assertion. 
\end{proof}

\begin{proof}[Proof of Theorem \ref{i1}:]
By Remark \ref{x00} (\ref{eqi1}) is the HN filtration of $\Ee_S$. Proposition \ref{a1.1} gives the case $s=1$. For $s>1$, we may find a direct sum of $s$ vector bundles of rank $2$ from the case $s=1$, fitting into (\ref{eqi1}): just take $\oplus _{p\in S} \Ee_p$. So a general extension in $\mathbb{E}(S)$ has a locally free and aCM middle term, because being local free and aCM are both open conditions. 

Note that $h^0(\Ee_S(3-m))=s$ from (\ref{eqi1}). In particular there is a unique subsheaf $\Aa\subset \Ee_S$ isomorphic to $\Oo _X(m-3)^{\oplus s}$ and for each $f\in \mathrm{Hom}(\Oo _X(m-3),\Ee_S)$ we have $f(\Oo _X(m-3)) \subseteq \Aa$. Now by Lemma \ref{c2} and Remark \ref{c3}, the extension (\ref{eqi1}) is the HN filtration of $\Ee_S$. By uniqueness of the HN filtration, we get $\Ee_S \ncong \Ee_{S'}$ for $S\ne S'$. 

Now it remains to show the indecomposability of $\Ee_S$. By Lemma \ref{x0}, there is no rank one factor of $\Ee_S$.

\quad \emph{Claim 1:} For two distinct points $p, q$ in $X_{\mathrm{reg}}$, we have 
$$\mathrm{Hom}_X(\Ii _{p,X},\Ii _{q,X}) =0, \mathrm{Hom}_X(\Ee_p,\Ii _{q,X}) =0 \text{ and } \mathrm{Ext}_X^1(\Ii_{p,X} , \Ii_{q,X})=0.$$  

\quad \emph{Proof of Claim 1:} By an extension theorem for locally free sheaves in \cite[Exercise I.3.20]{Hartshorne}, we have $\mathrm{Hom}_X(\Ii _{p,X},\Ii _{q,X}) = \mathrm{Hom}_X(\Oo _X,\Ii _{q,X}) =0$. The second vanishing is obtained from the first vanishing and $\mathrm{Hom}_X(\Oo _X(m-3),\Ii _{q,X}) =0$. For the last vanishing, we apply the functor $\mathrm{Hom}_X(\Ii_{p,X}, -)$ to the standard exact sequence for $\Ii_{q,X}\subset \Oo_X$ and obtain an exact sequence
$$0 \to \mathrm{Hom}_X(\Ii_{p,X}, \Oo_X) \to \mathrm{Hom}_X(\Ii_{p,X}, \Oo_q) \to \mathrm{Ext}_X^1(\Ii_{p,X}, \Ii_{q,X}) \to \mathrm{Ext}_X^1(\Ii_{p,X}, \Oo_X)$$
by the first vanishing in the Claim. Here we have
$$\mathrm{Hom}_X(\Ii_{p,X}, \Oo_X) \cong \mathrm{Hom}_X(\Ii_{p,X}, \Oo_q) \cong \mathbf{k}$$
and $\mathrm{Ext}_X^1(\Ii_{p,X}, \Oo_X) \cong H^1(\Ii_{p,X}(m-4))^\vee$ by Serre's duality. Then we get the assertion from the assumption that $m\ge 4$.  \qed

\quad {(a)} First assume $s=2$ and take  two distinct points $p, q$ in $X_{\mathrm{reg}}$.

\quad \emph{Claim 2:} If there exists a sheaf $\Gg\ncong \Ee _p\oplus \Ee _q$ fitting into the exact sequence
\begin{equation}\label{eqa3.1}
0 \to \Ee _p\stackrel{u}{\to} \Gg \stackrel{v}{\to} \Ee _q\to 0,
\end{equation}
then the case $s=2$ is true.

\quad \emph{Proof of Claim 2:} Such a sheaf $\Gg$ would be locally free and aCM with rank $4$. Since $h^1(\Oo _X)=0$ and (\ref{eqi1}) gives the HN filtrations of $\Ee _p$ and $\Ee _q$ by Lemmas \ref{c2}
and Remark \ref{c3}, $\Gg$ has a subsheaf $\Ff \cong \Oo _{X}(m-3)^{\oplus 2}$ such that $\Gg /\Ff$ is an extension of $\Ii _{q,X}(1)$ by $\Ii _{p,X}(1)$. Claim 1 gives $\Gg/\Ff \cong \Ii_{p,X}\oplus \Ii_{q,X}$ and so we get $\Gg \cong \Ee _S$ with $S =\{p,q\}$.\qed

\quad \emph{Claim 3:} If $\Gg \cong \Ee _p\oplus \Ee _q$ for all $\Gg$ in (\ref{eqa3.1}), then we have $\Ext_X ^1(\Ee _q,\Ee _p)=0$.

\quad \emph{Proof of Claim 3:} Let $\Gg \cong \Ee _p\oplus \Ee _q$ fitting into (\ref{eqa3.1}) correspond to $\epsilon \in \Ext_X ^1(\Ee _q,\Ee _p)$. Then it is sufficient to prove that $\epsilon =0$, or $\mathrm{ker}(v) \cong \Ee_p\oplus \{0\}$. But since $\mathrm{ker}(v) \cong \Ee _p$, it is sufficient to prove that either $\Ee _p\oplus \{0\} \supseteq \mathrm{ker}(v)$ or $\Ee _p\oplus \{0\} \subseteq \mathrm{ker}(v)$. Assume $v(\Ee _p\oplus \{0\}) \ne 0$. Since $\mathrm{Hom}_X(\Ee_p,\Ii _{q,X}) =0$ by Claim 1, we have $v(\Ee _p\oplus \{0\})\subseteq \Oo _X(m-3)$. This implies that the restriction of the surjection $\Ee _q\rightarrow \Ii _{q,X}$ to $v(\{0\}\oplus \Ee _q)$ is surjective. Since $h^0(\Oo _X)=1$ and $\Hom_X (\Oo _X(m-3),\Ii _{q,X}) =0$, we get either $v(\{0\}\oplus \Oo _X(m-3)) =0$ or $v$ induces an isomorphism $\{0\}\oplus \Oo _X(m-3)\rightarrow \Oo _X(m-3)$. Assume for the moment $v(\{0\}\oplus \Oo _X(m-3)) =0$. Since $v(\Ee _p\oplus \{0\})$ maps to $0$ in $\Ii _{q,X}$, we get that $v(\{0\}\oplus \Ee _q)$ is a subsheaf of $\Ee _q$ which maps isomorphically onto $\Ii _{q,X}$. So we get $\Ee _q\cong \Oo _X(m-3)\oplus \Ii _{q,X}$, a contradiction. Now assume $v(\{0\}\oplus \Oo _X(m-3)) =\Oo _X(m-3)$. Since $v(\{0\}\oplus \Ee _q)$ maps surjectively onto $\Ii _{q,X}$, the surjection $v$ induces an isomorphism $\{0\}\oplus \Ee _q\rightarrow \Ee _q$. Hence we get $\Ee _p\oplus \{0\} \subseteq \mathrm{ker}(v)$. \qed

\noindent Since $\Ext_X ^1(\Ee_q,\Ee _p) \ne 0$ by Lemma \ref{ppo}, Claim 3 concludes the proof of the case $s=2$.

\quad {(b)} Assume $s>2$ and that Theorem \ref{i1} holds for smaller numbers. On $\mathbb{E}(S)$ there is a universal family of extensions, i.e. a coherent sheaf $\Vv$ over $\mathbb{E}(S)\times X$ such that
for each $\epsilon \in \mathbb{E}(S)$ the sheaf $\Vv _{|\{\epsilon \}\times X}$ is the middle term $\Ee (\epsilon )$ of the extension corresponding to $\epsilon$; in general, if we take $\PP(\mathbb{E}(S))$ as a parameter space, then no such a universal sheaf exists. We call $\Vv '$ the restriction of of $\Vv$ to $\EE '(S)\times X$; we thus consider the family of aCM vector bundles induced from the extensions in $\EE'(S)$. 

Define a set $\Gamma(S)$ as follows:
$$\Gamma(S) :=\left\{(\epsilon ,\phi)~|~\epsilon \in \EE' (S) \text{ and } \phi \in \End (\Ee (\epsilon )) \text{ with }\phi ^2 = \phi \right \}.$$ 
Note that $\phi$ is a projection of $\Ee (\epsilon)$ onto a factor of $\Ee (\epsilon)$, with the exception when $\phi = \mathrm{Id}_{\Ee (\epsilon)}$ or $\phi \equiv 0$; if $\Ee (\epsilon )$ is indecomposable, only $(\epsilon ,\mathrm{Id}_{\Ee (\epsilon)})$ and $(\epsilon ,0)$ are contained in $\Gamma(S)$. Indeed, for any vector bundle $\Gg$, there exists a one-to-one correspondence:
$$\{ \phi \in \End(\Gg) ~|~ \phi^2=\phi\} \leftrightarrow \{ \text{factors of }\Gg\}$$ 
via $\phi \mapsto \mathrm{Im}(\phi) = \mathrm{ker}(\mathrm{Id}_{\Gg} -\phi)$, with $\Gg$ being associated to $\mathrm{Id}_{\Gg}$ and $0$ associated to the zero map. Thus $\Gg$ is decomposable if and only if $\End (\Gg )$ has a non-trivial idempotent. Note that $\Gamma(S)$ is a closed in the total space of the vector bundle $\mathcal{H}om(\Vv' , \Vv')$ over $\EE '(S)\times X$. By Lemma \ref{x1}, for each $\Ee (\epsilon )$ there is a unique partition of $S$ associated to any decomposition of $\Ee (\epsilon )$ with only finitely many indecomposable factors by the Krull-Schmidt theorem in \cite{Atiyah}. By Lemma \ref{x1} for each $\Ee \in \FF' (S)$ each isomorphism class of factors of $\Ee$ corresponds to a unique subset of $S$; $\Ee$ and $0$ correspond to $S$ and $\emptyset$, respectively. For each $(\epsilon,\phi)\in \Gamma(S)$, let $S(\phi)$ be the subset of $S$ associated to $\mathrm{Im}(\phi)$ by Lemma \ref{x1}. Set 
$$\Gamma _0(S) :=\left\{(\epsilon, \phi)\in \Gamma(S) ~\big|~ \phi\ne 0 \text{ and } \phi\ne \mathrm{Id}_{|\Ee(\epsilon)}\right\}. $$
The goal is to show that $\Gamma_0(S)$ is not dominant over $\FF(S)$ for a general $S$. 

Note that up to now we did not use that $S$ is contained in the same connected component $Y\cap X_{\mathrm{reg}}$ of $X_{\mathrm{reg}}$. In particular the case $s=2$ holds even if $X$ has more than one irreducible components with multiplicity one and the two points of $S$ belong to different connected components of $X_{\mathrm{reg}}$. 

Now we use a monodromy argument, which requires that $S$ is contained in a connected component of $T:=X_{\mathrm{reg}}\cap Y$ and that $S$ is general in $Y$. Set $S=\{p_1, \ldots, p_s\}$ and fix an ordering of the points in $S$, along which we get an ordering of the indecomposable factors of the sheaf $\oplus _{p\in S} \Ii_{p,X}$. Together with the usual ordering on the factors of $\mathcal {O}_X(m-3)^{\oplus s}$, we may see any $\epsilon \in \mathbb{E}(S)$ as an $(s\times s)$-square matrix, say $\epsilon =(\epsilon_{ij})$ with $1\le i, j\le s$, where $\epsilon _{ij}$ is an element of the $1$-dimensional vector space $\Ext_X ^1(\Ii _{p_j,X},\Oo_X(m-3))$. Note that for a fixed integer $j$, each $\epsilon_{ij}$ with $i=1,\dots ,s$, is an element of the same $1$-dimensional vector space. We write $\Oo _X(m-3)^{\oplus s} = \CC ^s\otimes \Oo _X(m-3)$. 

\quad \emph{Claim 4:} $\Ee=\Ee(\epsilon)$ has two indecomposable factors, one of them being $\mathrm{Im}(\phi)$ and the other one being $\mathrm{ker}(\phi)$.

\quad \emph{Proof of Claim 4:} Since $\phi^2=\phi$, we have $\Ee \cong \Ff_1 \oplus \Ff_2$ with $\Ff _1:= \mathrm{Im}(\phi)$ and $\Ff_2= \mathrm{ker}(\phi)$. By the definition of $A$, we get an exact sequence
\begin{equation}\label{eert1}
0\to \Oo_X(m-3)^{\oplus k} \to \Ff_1 \to \oplus_{p\in A}\Ii_{p,X} \to 0,
\end{equation}
with $k:= \sharp (A)$. Since neither $\phi \equiv 0$ nor $\phi =\mathrm{Id} _{\Ee}$, we have $0<k<s$. Then by Lemma \ref{x1} we get an exact sequence
\begin{equation}\label{eert2}
0\to \Oo_X(m-3)^{\oplus (s-k)} \to \Ff_2 \to \oplus_{p\in S\setminus A}\Ii_{p,X} \to 0.
\end{equation}
Now we need to prove that each $\Ff_i$ is indecomposable. By the inductive assumption it is sufficient to prove that $\Ff_1$ and $\Ff_2$ are the middle terms of general extensions (\ref{eert1}) and (\ref{eert2}), respectively. Since (\ref{eqi1}) gives the HN filtration of each $\Ff_i$, there are linear subspaces $V_1,V_2\subset \CC^s$ such that $\dim V_1 =k$, $\dim V_2 =s-k$ and 
$$v(\CC ^s\otimes \Oo _X(m-3))\cap \Ff_i = V_i\otimes \Oo _X(m-3)$$
for each $i$. From $\Ee \cong \Ff_1\oplus \Ff_2$ we see that $\CC^{s} =V_1\oplus V_2$. Now we reorder  the points in $S$ so that all points of $A$ are smaller than any points of $S\setminus A$. Then $\epsilon$ can be understood as an $(s\times s)$-square matrix in a block form:
\[
\epsilon=
\left[
\begin{array}{c|c}
B_{11} & B_{12}\\
\hline
B_{21} & B_{22}
\end{array}
\right]
\]
Here the $(k\times k)$-matrix $B_{11}$ in the upper left corner, is associated to the extension (\ref{eert1}) and similarly the $((s-k)\times (s-k))$-matrix $B_{22}$ in the lower right corner, is associated to the extension (\ref{eert2}). The matrix of $\epsilon$ also has a $(k\times (s-k))$-submatrix $B_{12}$ and an $((s-k)\times k)$-submatrix $B_{21}$. Since $\epsilon$ is general, all the entries in each $B_{ij}$ are also general. In particular, $B_{11}$ and $B_{22}$ are general and this implies that each  $\Ff_i$ is general. The inductive assumption gives that each $\Ff_i$ is indecomposable.\qed

Assume that a general $\Ee=\Ee(\epsilon)$ has two indecomposable factor, i.e. the set $\Gamma_0(S)$ is dominant over $\FF(S)$. Let $\Gamma'(S)$ be an irreducible component of $\Gamma_0(S)$ dominant over $\FF (S)$ and set $A:= S(\phi)$, where $(\epsilon ,\phi )$ is any element of $\Gamma'(S)$. Now assume that $(\epsilon ,\phi)$ is general in $\Gamma'(S)$ and set $\Ee := \Ee (\epsilon)$. Note that the subset $A\subset S$ is invariant as $(\epsilon, \phi)$ varies in $\Gamma_0(S)$, due to the irreducibility of $\Gamma_0(S)$. Below we find a contradiction under the assumptions that $\Ee$ is decomposable and that $S$ is general in $\mathrm{Sym}^s(T)$. 

Let $\widetilde{\Gamma}$ be the set of all triples $(S,\Ee ,\phi)$ with $S\in \mathrm{Sym} ^s(T)$ and $(\Ee ,\phi )\in \Gamma_0 (S)$. Then $\widetilde{\Gamma}$ is an algebraic subset whose fibre over $S\in \mathrm{Sym}^s(T)$ is $\Gamma_0(S)$, with a projection map $u: \widetilde{\Gamma} \rightarrow  \mathrm{Sym} ^s(T)$. If $u$ is not dominant, then it would imply that there exists a $2s$-dimensional family of pairwise not isomorphic indecomposable aCM vector bundles of rank $2s$ on $X$. Thus we may assume that $u$ is dominant. We fix a general $S\in \mathrm{Sym} ^s(T)$ and fix an irreducible component $\Gamma '(S)$ of $\Gamma (S)$ to which we apply the previous construction with the partition $A\sqcup (S\setminus A)$ of $S$ attached to $\Gamma'(S)$. Let $\widetilde{\Gamma}'$ be any irreducible component of $\widetilde{\Gamma}$ containing $\Gamma '(S)$ such that $u_{|\widetilde{\Gamma} '}$ is dominant.

Let $\Vv$ denote a non-empty Zariski open subset of $\mathrm{Sym}^s(T)$ containing $S$ such that for every $T\in \Vv$ a general $\Ee_T \in \EE (T)$ has exactly two indecomposable factors, one associated to a subset $F$ of $T$ with $|F| = |A|=k$ and the other one associated to $T\setminus E$. Now we fix $p\in A$ and $q\in S\setminus A$. Since $Y_{\mathrm{reg}}$ is a connected manifold and $p, q\in Y_{\mathrm{reg}}$, there exists a connected smooth affine curve $U\subset \AA^1(\mathbf{k})$ with a map $\phi :  U\rightarrow  Y_{\mathrm{reg}}$ such that $\phi (t_0) = p$ and $\phi (t_1) =q$ for some $t_0, t_1 \in U$, and $\phi(U)$ passes no other points of $S$. Similarly we may consider a map $\phi' : U \rightarrow Y_{\mathrm{reg}}$ with $\phi'(t_1)=p$ and $\phi'(t_0)=q$ such that $\phi(t) \ne \phi'(t)$ for any $t\in U$. For each $t\in U$, set 
$$A_t:= (A\setminus \{p\})\cup \{\phi(t)\} \phantom{A}, \phantom{A}S_t:= (S\setminus \{p,q\}) \cup \{\phi(t) , \phi'(t)\},$$
e.g. $(A_{t_0}, S_{t_0})=(A_{t_1}, S_{t_1})=(A,S)$. Restricting $U$ to an open neighborhood of $\{t_0, t_1\}$, we may assume that $S_t\in \Vv$ for all $t\in U$. Then for each $t\in U$ we have a partition $S_t = A_t\sqcup (S_t\setminus A_t)$ such that a general $\Ee _{S_t}\in \Gamma' (S_t)$ has exactly two indecomposable factors, one associated to $A_t$ and the other associated to $S_t\setminus A_t$, due to the choice of $\widetilde{\Gamma}'$. 

We start from $t=t_0$ and vary $t$ in $U$ to arrive at $t=t_1$, where we have $S_{t_1} =S=A_q\sqcup (S\setminus A_q)$ with $A_q=(A\setminus \{p\})\cup \{q\}$. Since $s>2$, we have $\{A,S\setminus A\} \ne \{A_q,S\setminus A_q\}$, contradicting the assumption that $\Ee _S$ has exactly two indecomposable factors.
\end{proof}

\section{Non-locally free aCM sheaf}\label{sec5}
In this section, we let $X\subset \PP^N$ be a closed subscheme with pure dimension $n$ at least two.  Assume that each local ring $\Oo _{X,x}$ with $x\in X$, has depth $n$ and that $X$ is aCM with respect
to $\Oo _X(1)$, i.e. $h^i(\Ii _{X,\PP^N}(t)) =0$ for all $t\in \ZZ$ and all $1\le i\le n-1$. The exact sequence
$$0\to \Ii _{X,\PP^N}(t) \to \Oo _{\PP^N}(t) \to \Oo _X(t)\to 0$$
shows that $h^i(\Ii _{X,\PP^N}(t)) = h^{i-1}(\Oo _X(t))$ for all $i\ge 2$. Hence we may restate our assumption as $h^1(\Ii _{X,\PP^N}(t)) =0$ and $h^i(\Oo _X(t)) =0$ for all $t\in \ZZ$ and $i=1,\dots ,n-2$. By a theorem of Serre, the condition that $h^i(\Oo _X(-x)) =0$ for $x\gg 0$ and $i=1,\dots ,n-2$, plus having positive depth at each $x\in X$, is equivalent to all $\Oo _{X,x}$ having depth $n$. Since $h^1(\Ii _{X,\PP^N})=0$, we have $h^0(\Oo _X)=1$ and in particular $X$ is connected. Since $h^1(\Ii _{X,\PP^N}(1)) =0$, $X$ is linearly normal in the linear subspace of $\PP^N$ spanned by $X$. Since $n\ge 2$ we have $h^1(\Oo _X)=0$ an so $\mathrm{Pic}(X)$ is a finitely generated abelian group. 

Fix an irreducible component $Y$ of $X_{\mathrm{red}}$. If $X$ is a hypersurface in $\PP^N$, then the multiplicity $\mu \ge 1$ is well-defined. In the general case we do not need the notion of the multiplicity $\mu$ of $Y$ in $X$ at a general point of $Y$. In this section we need knowledge only on whether $\mu =1$ or $\mu >1$. We say that $Y$ has multiplicity $\mu=1$ if $X$ is reduced at a general $x\in Y$, i.e. there is a non-empty open subset $U\subseteq Y$ such that $\Oo_{X,x}=\Oo_{Y,x}$ for all $x\in U$. Otherwise we say that $Y$ has multiplicity $\mu>1$. We are interested only in the case $X$ not integral; if $Y$ has multiplicity $1$, then we have other irreducible components of $X_{\mathrm{red}}$.

\begin{lemma}\label{zz1}
Let $C\subset X$ be a reduced aCM subvariety of pure dimension $n-1$. Then its ideal sheaf $\Ii _{C,X}$ is an aCM $\Oo _X$-sheaf such that
\begin{itemize}
\item it is locally free outside $C$ and 
\item for any closed subscheme $Y\subsetneq X$, it is not an $\Oo _Y$-sheaf. 
\end{itemize}
\end{lemma}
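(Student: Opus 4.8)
The plan is to verify the three asserted properties of $\Ii_{C,X}$ in turn, using the standard exact sequence
\begin{equation*}
0 \to \Ii_{C,X} \to \Oo_X \to \Oo_C \to 0.
\end{equation*}
First I would establish that $\Ii_{C,X}$ is locally Cohen--Macaulay, i.e. has depth $n$ at every point. Away from $C$ the sheaf $\Ii_{C,X}$ agrees with $\Oo_X$, so depth $n$ there is clear since $X$ has pure depth $n$. At a point $x\in C$, I would apply the depth lemma to the stalk sequence $0\to (\Ii_{C,X})_x \to \Oo_{X,x}\to \Oo_{C,x}\to 0$: since $\depth \Oo_{X,x}=n$ and $\depth \Oo_{C,x}=n-1$ (as $C$ is aCM of pure dimension $n-1$, hence its local rings are Cohen--Macaulay of dimension $n-1$ by Serre's criterion cited in the Preliminary section), the depth lemma forces $\depth (\Ii_{C,X})_x \ge \min\{n, n-1+1\}=n$, and it cannot exceed $\dim\Oo_{X,x}=n$. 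By Remark \ref{rremm} it would in fact suffice to check positive depth, but the depth lemma gives the sharp value directly.

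Next I would check condition (ii) of Definition \ref{deff}, the vanishing of intermediate cohomology. Twisting the exact sequence by $\Oo_X(t)$ and taking the long exact sequence in cohomology, for $1\le i\le n-1$ we get
\begin{equation*}
H^{i-1}(\Oo_C(t)) \to H^i(\Ii_{C,X}(t)) \to H^i(\Oo_X(t)).
\end{equation*}
Now $H^i(\Oo_X(t))=0$ for $1\le i\le n-1$ since $X$ is aCM, and $H^{i-1}(\Oo_C(t))=0$ for $1\le i-1\le n-2$ since $C$ is aCM of dimension $n-1$; the remaining case $i=1$ uses $H^0(\Oo_X(t))\to H^0(\Oo_C(t))$ together with $H^1(\Ii_{C,X}(t))\hookrightarrow H^1(\Oo_X(t))=0$ — wait, more carefully, for $i=1$ the relevant segment is $H^0(\Oo_X(t))\to H^0(\Oo_C(t))\to H^1(\Ii_{C,X}(t))\to H^1(\Oo_X(t))=0$, so $H^1(\Ii_{C,X}(t))$ is the cokernel of the restriction map on sections; this vanishes exactly when $C$ is "linearly normal relative to $X$ in every degree", which is precisely what aCM-ness of $C$ together with aCM-ness of $X$ gives (both $\Oo_X$ and $\Oo_C$ have the expected $H^0$, and the restriction is surjective degree by degree since $C\subset X$ is itself arithmetically Cohen--Macaulay as a subscheme of the same projective space). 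So $\Ii_{C,X}$ is aCM.

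For the first bullet: outside $C$ the natural map $\Ii_{C,X}\to\Oo_X$ is an isomorphism of sheaves, and $\Oo_X$ is locally free of rank one on all of $X$, so $\Ii_{C,X}$ is locally free there. For the second bullet I would argue by contradiction: suppose $\Ii_{C,X}$ were an $\Oo_Y$-sheaf for some closed subscheme $Y\subsetneq X$, meaning it is annihilated by $\Ii_{Y,X}$. Since $X$ has pure dimension $n$, a proper closed subscheme $Y$ must miss a nonempty open subset $U$ of some irreducible component of $X$; shrinking, choose $U$ disjoint from $C$ as well. On $U$ we have $(\Ii_{C,X})|_U \cong \Oo_U$, which is a faithful $\Oo_U$-module, so $\Ii_{Y,X}$ cannot annihilate it unless $\Ii_{Y,X}|_U=0$, i.e. $Y\supseteq U$, contradicting the choice of $U$. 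The main obstacle I anticipate is bookkeeping the $i=1$ case of the cohomology vanishing cleanly — one must make sure the surjectivity of $H^0(\Oo_X(t))\onto H^0(\Oo_C(t))$ in every degree really does follow from the stated hypotheses and is not an extra assumption; this is where invoking that $C$ is aCM \emph{as a subvariety of $\PP^N$} (not merely abstractly Cohen--Macaulay) does the work, via the commutative diagram relating the restriction maps for $\PP^N \to X$ and $\PP^N\to C$.
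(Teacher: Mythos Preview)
Your approach is essentially the paper's: factor the surjection $H^0(\Oo_{\PP^N}(t))\twoheadrightarrow H^0(\Oo_C(t))$ (coming from $h^1(\Ii_{C,\PP^N}(t))=0$) through $H^0(\Oo_X(t))$ to get surjectivity of the restriction $\eta_t$, hence $h^1(\Ii_{C,X}(t))=0$; handle the higher $h^i$ by the long exact sequence and the aCM vanishing for $\Oo_X$ and $\Oo_C$; and read off local freeness on $X\setminus C$ from $\Ii_{C,X}|_{X\setminus C}\cong\Oo_{X\setminus C}$. Your explicit depth-lemma check of local Cohen--Macaulayness is a welcome addition that the paper leaves implicit.

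There is, however, a genuine gap in your argument for the second bullet. You assert that because $X$ has pure dimension $n$, a proper closed subscheme $Y\subsetneq X$ must miss a nonempty open subset of $X$. That is false when $X$ is non-reduced: if $X$ is a multiple hypersurface and $Y=X_{\mathrm{red}}$, then $|Y|=|X|$ and no open set avoids $Y$. Since Section~\ref{sec5} explicitly allows components of multiplicity $\mu>1$, this case is live. The repair is simple and is what the paper does in spirit: take $U=X\setminus C$ from the start, with no reference to $Y$. On $U$ you have $\Ii_{C,X}|_U\cong\Oo_U$, a faithful $\Oo_U$-module, so $\Ii_{Y,X}\cdot\Ii_{C,X}=0$ forces $\Ii_{Y,X}|_U=0$. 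Then $\Ii_{Y,X}\subset\Oo_X$ is supported on the nowhere-dense closed set $C$; since $\Oo_X$ has depth $\ge 1$ everywhere it admits no nonzero subsheaf with such support, so $\Ii_{Y,X}=0$ and $Y=X$, a contradiction.
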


\begin{proof}
Since $C$ is aCM as a closed subscheme of $\PP^N$ and $C$ has pure dimension $n-1$, we have $h^1(\Ii _{C,\PP^N}(t)) =0$ for all $t\in \ZZ$. Thus the restriction map $\rho _t: H^0(\Oo _{\PP^N}(t)) \rightarrow H^0(\Oo _C(t))$ is surjective for any $t\in \ZZ$. Since $\rho _t$ factors through the restriction map $\eta _t: H^0(\Oo _X(t)) \rightarrow H^0(\Oo _C(t))$, $\eta _t$ is surjective. Since $\eta _t$ is surjective and $h^1(\Oo _X(t))=0$, we have $h^1(\Ii _{C,X}(t)) =0$. This implies that $\Ii _{C,X}$ is aCM. From $\Ii _{C,X\setminus C} \cong \Oo _{X\setminus C}$, we see that $\Ii _{C,X}$ is locally free and of rank $1$ outside $C$. Since $C$ is not an irreducible component of $X_{\mathrm{red}}$ and $\Ii _{C,X}$ is locally free of positive rank outside $C$, there is no closed subscheme $Y\subsetneq X$ with $\Ii _{C,X}$ an $\Oo _Y$-sheaf.
\end{proof}

\begin{proposition}\label{zz2}
Fix an irreducible component $Y$ of $X_{\mathrm{red}}$. For a fixed integer $e>0$ and any integral divisor $C\in |\Oo_Y(e)|$, define
$$\Sigma _C:=\left\{ p\in Y~|~ \Ii_{C,X} \text{ is not locally free at }p\right\}.$$
\begin{itemize}
\item [(i)] If $Y$ has multiplicity $\mu>1$ in $X$, then we have $\Sigma _C = C$, i.e. for all $p\in C$ the sheaf $\Ii _{C,X}$ is not locally free at $p$. For any two integral curves $C_1,C_2\in |\Oo_Y(e)|$, we have $\Ii _{C_1,X}\cong \Ii _{C_2,X}$ if and only if $C_1=C_2$.
\item [(ii)] Assume that $Y$ has multiplicity $\mu=1$ and that $X$ is not integral. Let $F\in |\Oo _Y(m-1)|$ be the complete intersection of $Y$ with the other components of $X$, counting multiplicities. If $F\ne \emptyset$, then $F$ has pure dimension $n-1$ and $F\cap C \ne \emptyset$ with $\Sigma _C =(F\cap C)_{\mathrm{red}}$.
\item [(iii)] For any two integral divisors $C_1,C_2\in |\Oo_Y(e)|$ such that $\Ii _{C_1,X}\cong \Ii _{C_2,X}$, we have $\Sigma_{C_1}=\Sigma_{C_2}$; in case (i) we have the converse.
\end{itemize}
\end{proposition}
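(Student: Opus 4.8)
The plan is to read $\Sigma_C$ off a stalkwise computation and then assemble the three statements. Fix $p\in C$ and write $R=\Oo_{X,p}$; let $\mathfrak p_Y\subseteq R$ be the ideal of $Y$, so that $R/\mathfrak p_Y=\Oo_{Y,p}$ is a domain because $Y$ is an integral component of $X$. The stalk $\Ii_{C,X,p}$ is the ideal $I$ of $C$ in $R$, namely the preimage under $R\twoheadrightarrow\Oo_{Y,p}$ of the invertible ideal $\Ii_{C,Y,p}$ of the Cartier divisor $C\in|\Oo_Y(e)|$; hence $\mathfrak p_Y\subseteq I$ and $I$ maps onto $\Ii_{C,Y,p}\ne 0$ (as $e>0$, $C\subsetneq Y$). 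Since $\Ii_{C,X}$ has generic rank one (it agrees with $\Oo_X$ off $C$, cf. Lemma~\ref{zz1}), it is locally free at $p$ iff $I$ is a free $R$-module, and freeness forces $I$ to be principal.

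The core claim is that $\Ii_{C,X}$ is locally free at $p\in C$ if and only if $\mathfrak p_Y=0$ in $R$. If $\mathfrak p_Y=0$ then $R=\Oo_{Y,p}$ and $I=\Ii_{C,Y,p}$ is invertible, so $\Ii_{C,X}$ is locally free at $p$. Conversely, suppose $I=(h)$ is principal (which holds if $\Ii_{C,X}$ is locally free at $p$); then the image $\bar h$ in the domain $\Oo_{Y,p}$ generates $\Ii_{C,Y,p}\ne 0$, so $\bar h\ne0$. From $\mathfrak p_Y\subseteq(h)$, every $a\in\mathfrak p_Y$ is $a=hd$ with $\bar h\bar d=0$, whence $\bar d=0$ and $d\in\mathfrak p_Y$; thus $\mathfrak p_Y=h\mathfrak p_Y\subseteq\mm\mathfrak p_Y$ (here $h\in\mm$ since $p\in C$), and Nakayama gives $\mathfrak p_Y=0$. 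Therefore $\Sigma_C=\{p\in C:\ \mathfrak p_Y\ne 0\ \text{in}\ \Oo_{X,p}\}$.

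Now I translate $\mathfrak p_Y\ne0$ into the geometry along $Y$. Let $U\subseteq Y$ be the dense open set of points lying on no component of $X$ other than $Y$; for $p\in U$ one has $\mathfrak p_Y=\mathrm{nil}(R)$ and $R/\mathrm{nil}(R)=\Oo_{Y,p}$, while for $p\in Y\setminus U$ the ring $R$ has at least two minimal primes, so $\mathfrak p_Y\ne0$. If $\mu>1$: by the very definition of $\mu>1$ the open set $\{x\in Y:\ \Oo_{X,x}=\Oo_{Y,x}\}$ is empty, so $\Oo_{X,p}$ is non-reduced, i.e. $\mathfrak p_Y=\mathrm{nil}(\Oo_{X,p})\ne0$, for all $p\in U$; together with the case $p\in Y\setminus U$ this gives $\mathfrak p_Y\ne0$ at every $p\in C\subseteq Y$, hence $\Sigma_C=C$. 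If $\mu=1$: then $\Oo_{X,\xi}$ is reduced at the generic point $\xi$ of $Y$, so for $p\in U$ the ring $R$ satisfies $(R_0)$ along $Y$; being Cohen--Macaulay it satisfies $(S_1)$, so $R$ is reduced, i.e. $\mathfrak p_Y=0$, for $p\in U$. Hence $\Sigma_C=C\setminus U=C\cap\bigl(\bigcup_{Y'\ne Y}Y'\bigr)=(F\cap C)_{\mathrm{red}}$, since the underlying set of $F=Y\cap X'$ (with $X'$ the union of the remaining components) is exactly $Y\cap\bigcup_{Y'\ne Y}Y'$. This proves (i) up to its last assertion and the equality in (ii) once the listed properties of $F$ are established.

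Finally, for (ii) assume $X$ is not integral; then $X'\ne\emptyset$ and $X=Y\cup X'$, so connectedness of $X$ gives $F=Y\cap X'\ne\emptyset$. To see $F$ is pure of dimension $n-1$: since $F\subsetneq Y$ every component has dimension $\le n-1$, and if some component $F_0$ had dimension $\le n-2$, then at its generic point $\zeta$ the ring $\Oo_{X,\zeta}$ would be Cohen--Macaulay of dimension $\ge2$, while $\Spec\Oo_{X,\zeta}=V(\mathfrak p_Y)\cup V(I_{X'})$ with $V(\mathfrak p_Y)\cap V(I_{X'})=\{\mathfrak m_\zeta\}$ (the closed point, since $\zeta$ is a generic point of $Y\cap X'$); its punctured spectrum would then split into the two nonempty open-and-closed pieces $V(\mathfrak p_Y)\setminus\{\mathfrak m_\zeta\}$ and $V(I_{X'})\setminus\{\mathfrak m_\zeta\}$, contradicting Hartshorne's connectedness theorem (a Cohen--Macaulay local ring of dimension $\ge2$ has connected punctured spectrum). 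And $F\cap C\ne\emptyset$ because $F$ is projective of dimension $n-1\ge1$ whereas $C$ is an effective ample divisor on $Y$, whose complement is affine, so $F$ cannot be contained in $Y\setminus C$. For (iii): an isomorphism $\Ii_{C_1,X}\cong\Ii_{C_2,X}$ carries the non-locally-free locus to itself, so $\Sigma_{C_1}=\Sigma_{C_2}$; in case (i), where $\Sigma_{C_i}=C_i$, this forces $C_1=C_2$, giving both the converse in (iii) and the last assertion of (i). I expect the purity of $F$ to be the main obstacle: it is the only step requiring a nonlocal input (the connectedness theorem), and it needs the scheme and support structure of $F$ and of the residual $X'$ (in particular the local decomposition $(0)=\mathfrak p_Y\cap I_{X'}$, valid because $\mu=1$) to be pinned down carefully; everything else reduces to Nakayama and Serre's criteria.
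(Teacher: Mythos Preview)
Your proof is correct, and its overall architecture matches the paper's: both argue stalkwise at points of $C$, both invoke connectedness of the punctured spectrum (Hartshorne's theorem) for the purity of $F$, and both deduce (iii) and the last clause of (i) from the identification $\Sigma_C=C$.

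The genuine difference is in the core local step. The paper treats the two cases separately: for $\mu>1$ it picks a nonzero nilpotent $h\in\mathfrak p_Y\subset I_C$, writes $h=w\cdot(\text{unit or not})$ and derives that the generator $w$ is a zero-divisor; for $\mu=1$ and $p\in F\cap C$ it argues (somewhat tersely) via the behaviour of a local equation on the extra component through $p$. You replace this case split by a single clean lemma: if $I_C=(h)$ then $\mathfrak p_Y=h\,\mathfrak p_Y$, hence $\mathfrak p_Y=0$ by Nakayama. This gives the uniform characterization $\Sigma_C=\{p\in C:\mathfrak p_{Y}\ne 0\text{ in }\Oo_{X,p}\}$, from which both (i) and the equality in (ii) drop out by reading off when $\mathfrak p_Y\ne 0$ (nilradical nonzero versus a second minimal prime). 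Your use of Serre's criterion $(R_0)+(S_1)$ to get reducedness at points of $C\setminus F$ in case $\mu=1$ is also more explicit than the paper, which leaves that step implicit. The only place to be a little careful is the definition of the residual scheme $X'$ in the purity argument: what you actually need is set-theoretic, namely that the minimal primes of $\Oo_{X,\zeta}$ split into $\mathfrak p_Y$ and the primes of the other components, so that the punctured spectrum decomposes; your parenthetical about $(0)=\mathfrak p_Y\cap I_{X'}$ is fine once $I_{X'}$ is taken to be the intersection of those other minimal primes.
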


\begin{proof}
By Lemma \ref{zz1} the sheaf $\Ii _{C,X}$ is aCM and locally free with rank $1$ at all $p\in X\setminus C$.  Fix $p\in C$ and assume that $\Ii _{C,X}$ is locally free at $p$. Then there is $w\in (\Ii _{C,X})_p$ such that $w$ is not a zero-divisor of $\Oo _{X,p}$ and $(\Ii _{C,X})_p \cong w\Oo _{X,p}$ as a module over the local ring $\Oo _{X,p}$. We get that in a neighborhood of $p$ the divisor $C$ is a Cartier divisor of $X$. Let $I\subset \Oo _{X,p}$ be the ideal of $Y$ and $J\subset \Oo _{X,p}$ the ideal of $C$. We have $I \subset J$. First assume that $X$ is not reduced at a general point of $X$. Since the support of the nilradical $\eta \subset \Oo _X$  of the structural sheaf $\Oo_Y$ is a closed subset of $X_{\mathrm{red}}$, $X$ is not reduced at any point of $Y$ and in particular it is not reduced at $p$. Thus there is a nonzero $h\in I$ such that $h^m=0$ for some $m>0$. Since $I\subset J$, we have $h\in J$ and so $h$ is divided by $w$. Thus we get $w^m=0$ and so $w$ is a zero-divisor, a contradiction.

Now assume that $X$ is reduced at a general point of $Y$. Since $X$ is not integral and it has pure depth $n$, $X_{\mathrm{red}}$ has at least one another irreducible component. Since $h^0(\Oo _X)=1$, $X$ is connected and so $F\ne \emptyset$. Fix any $x\in F$. Since $\Oo _{X,x}$ has depth $n\ge 2$, it is connected in dimension $\le n-1$, i.e. for any open neighborhood $W$ of $x$ in $X$ and any closed subscheme $V$ of $W$, there is a neighborhhod $U$ of $x$ in $W$ such that $U\setminus (U\cap V)$ is connected. Thus $F$ has pure dimension $n-1$. Since $C\in| \Oo _Y(e)|$, $C$ is a Cartier divisor of $Y$. Thus $C$ is a Cartier divisor of $X$ at all points of $C\setminus (C\cap F)$. Since $e>0$, $C$ is an ample divisor of $Y$. In particular, we get $F\cap C \ne \emptyset$. Fix $p\in F\cap C$. Any local equation $w$ of $C$ at $p$ vanishes on each irreducible component of $X_{\mathrm{red}}$ containing $p$, because $w$ is assumed to be a non-zero divisor of $\Oo _{X,p}$. There is at least one another irreducible component of $X_{\mathrm{red}}$ containing $p$, because $p\in F$.

Part (iii) is obvious.
\end{proof}

As a corollary of Proposition \ref{zz2} we get the following result, which shows that $X$ is of wild representation type in a very strong form.

\begin{proposition}\label{zz3}
Take $X$ as above. For a fixed integer $w>0$, there is an integral quasi-projective variety $\Delta$ and a flat family $\{\Ff _a\}_{a\in \Delta}$ of aCM sheaf on $X$ with each $\Ff _a$ locally free outside a one-codimensional subscheme $C_a$ and for each $a\in \Delta$ the set of all $b\in \Delta$ such that
$\Ff _b \cong \Ff _a$ is contained in an algebraic subscheme $\Delta _a\subset \Delta$ with $\dim \Delta -\dim \Delta _a\ge w$.
\end{proposition}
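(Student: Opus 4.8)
\emph{Plan.} The family will consist of the ideal sheaves $\Ii_{C,X}$ of integral divisors $C$ moving in a large complete linear system on one fixed component of $X_{\mathrm{red}}$, and Proposition \ref{zz2} will control when two such sheaves are isomorphic. Fix an irreducible component $Y$ of $X_{\mathrm{red}}$ and an integer $e\gg 0$ (to be pinned down at the end). Let $\Delta$ be a non-empty dense open subset of $|\Oo_Y(e)|=\PP(H^0(\Oo_Y(e)))$ whose members are integral divisors $C$ which, in the case $\mu(Y)=1$, in addition contain no irreducible component of the divisor $F$ from Proposition \ref{zz2}(ii); such $C$ exist for $e\gg 0$ by Bertini (using $\dim Y=n\ge 2$), and $\Delta$ is an integral quasi-projective variety of dimension $h^0(\Oo_Y(e))-1$. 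For $a\in\Delta$ with corresponding divisor $C_a$, set $\Ff_a:=\Ii_{C_a,X}$; by Lemma \ref{zz1} this is an aCM $\Oo_X$-sheaf, locally free outside the codimension-one subscheme $C_a$. The universal divisor over $|\Oo_Y(e)|$ is an effective Cartier divisor on $Y\times|\Oo_Y(e)|$ (since $Y$ is integral, a nonzero section of $\Oo_Y(e)$ is a non-zero-divisor), hence flat over the base; restricting over $\Delta$ and using $0\rightarrow\Ii_{C_a,X}\rightarrow\Oo_X\rightarrow\Oo_{C_a}\rightarrow 0$ shows that $\{\Ff_a\}_{a\in\Delta}$ is a flat family.

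\emph{The locus $\Delta_a$.} By Proposition \ref{zz2}(iii), $\Ff_a\cong\Ff_b$ forces $\Sigma_{C_a}=\Sigma_{C_b}$. If $\mu(Y)>1$, then $\Sigma_C=C$ by Proposition \ref{zz2}(i), so $\Ff_a\cong\Ff_b$ implies $C_a=C_b$; take $\Delta_a=\{a\}$, so that $\dim\Delta-\dim\Delta_a=h^0(\Oo_Y(e))-1$. If $\mu(Y)=1$, then $X$ non-integral and connected forces $F\neq\emptyset$, and Proposition \ref{zz2}(ii) gives $F\in|\Oo_Y(m-1)|$ of pure dimension $n-1$ and $\Sigma_C=(F\cap C)_{\mathrm{red}}$; so $\{b:\Ff_b\cong\Ff_a\}$ lies in the algebraic subset $\Delta_a:=\{b\in\Delta:(F\cap C_b)_{\mathrm{red}}=(F\cap C_a)_{\mathrm{red}}\}$. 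To bound $\dim\Delta_a$, consider the restriction map $\Delta\rightarrow|\Oo_F(e)|$, $C\mapsto C|_F$, which is defined on $\Delta$ since no $C\in\Delta$ contains a component of $F$ (so $C|_F$ is the effective Cartier divisor $F\cap C$ on $F$, whose support is $\Sigma_C$). A divisor $C\in\Delta_a$ restricts on $F$ to an effective Cartier divisor with support $\Sigma_{C_a}$ and degree $e\deg F$; the multiplicities along the finitely many codimension-one components of $\Sigma_{C_a}$ are then bounded by $e\deg F$, so only finitely many such divisors occur, and $\Delta_a$ maps to a finite subset of $|\Oo_F(e)|$. Each fibre of $C\mapsto C|_F$ is a projective linear space of dimension $h^0(\Ii_{F,Y}(e))=h^0(\Oo_Y(e-m+1))$ since $\Ii_{F,Y}\cong\Oo_Y(1-m)$. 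Hence $\dim\Delta_a\le h^0(\Oo_Y(e-m+1))$ and
$$\dim\Delta-\dim\Delta_a\ \ge\ h^0(\Oo_Y(e))-1-h^0(\Oo_Y(e-m+1)).$$

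\emph{Conclusion and the hard step.} For $e\gg 0$ we have $h^0(\Oo_Y(e))=\mathrm{P}_Y(e)$ and $h^0(\Oo_Y(e-m+1))=\mathrm{P}_Y(e-m+1)$, with $\mathrm{P}_Y$ the Hilbert polynomial of $Y$, of degree $n\ge 2$ and positive leading coefficient; therefore $\mathrm{P}_Y(e)-1-\mathrm{P}_Y(e-m+1)$ is a polynomial in $e$ of degree $n-1\ge 1$ with positive leading coefficient, and likewise $h^0(\Oo_Y(e))-1\rightarrow\infty$. Fix one $e$ for which both $\mathrm{P}_Y(e)-1-\mathrm{P}_Y(e-m+1)\ge w$ and $h^0(\Oo_Y(e))-1\ge w$; with this $e$ the family $\{\Ff_a\}_{a\in\Delta}$ has the stated property in both cases. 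I expect the main obstacle to be the dimension estimate for $\mu(Y)=1$: one must ensure that replacing $F\cap C$ by its reduction $\Sigma_C$ still recovers $C|_F$ up to finite ambiguity, and that the restriction-to-$F$ map is honest on $\Delta$. If $F$ is non-reduced these points need extra care — replacing $F$ by one of its reduced components of dimension $n-1$ and shrinking $\Delta$ so that the relevant intersections are proper and generically reduced should resolve the technicalities, at the cost of slightly reworking the fibre-dimension computation.
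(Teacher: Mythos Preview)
Your approach is essentially the paper's: take $\Ff_a=\Ii_{C_a,X}$ for integral $C_a\in|\Oo_Y(e)|$ with $e\gg0$, use Proposition~\ref{zz2}(i) when $\mu(Y)>1$ to get $\Delta_a=\{a\}$, and when $\mu(Y)=1$ use Proposition~\ref{zz2}(ii),(iii) to reduce the isomorphism question to the scheme $\Sigma_C=(F\cap C)_{\mathrm{red}}$, bounding $\dim\Delta_a$ by the kernel $h^0(\Ii_{F,Y}(e))=h^0(\Oo_Y(e-m+1))$ of restriction to $F$. The only real difference is how you pass from $\Sigma_C$ back to $C|_F$: the paper simply shrinks $\Delta$ to those $C$ for which $F\cap C$ is already reduced, so that $\Sigma_C=F\cap C$ on the nose and the equation of $C$ is pinned down modulo $H^0(\Oo_Y(e)(-F))$; you instead keep $\Delta$ larger and argue that a Cartier divisor on $F$ of bounded degree with prescribed support lies in a finite set. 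Your route is a bit more flexible but, as you note yourself, the finiteness step is only clean when $F$ is reduced (otherwise a Cartier divisor on $F$ is not determined by component multiplicities); the paper's reducedness restriction on $F\cap C$ sidesteps exactly this, at the cost of needing such $C$ to exist. Either way the dimension count and the conclusion are the same.
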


\begin{proof}
First assume that $X$ has at least one irreducible component $Y$ with multiplicity at least $2$. Fix a positive integer $e$ such that $\dim |\Oo _Y(e)| \ge w$ and take as $\Delta$ the family of all integral $C\in |\Oo _Y(e)|$. Then we may apply (i) of Proposition \ref{zz2}. In this case we may find $\Delta$ with the additional condition that for all $a,b\in \Delta$ we have $\Ff _a\cong \Ff _b$ if and only if $a=b$.

Now assume that each irreducible component of $X$ has multiplicity $1$ and fix one of them, say $Y$. Write $F\subset Y$ as in (ii) of Proposition \ref{zz2}. Fix an integer $e >0$ such that $h^0(\Oo _X(e)) -h^0(\Oo _X(e)(-F) )> w$ and let $\Delta$ be the set of all integral divisors $C\in |\Oo _X(e)|$ not contained in $F$
and such that the scheme $F\cap C$ is reduced. Since $F$ has pure dimension $n-1$
and $C$ is an ample divisor,
the set $(F\cap C)_{\mathrm{red}}$ has pure dimension $2$. Note that if $C, D\in \Delta$ and $(C\cap F)_{\mathrm{red}} = (D\cap F)_{\mathrm{red}} $, then any equation of $C$ in $H^0(\Oo _X(e))$ differs from an equation of $D$ by an element of $H^0(\Oo _X(e)(-F))$. Then we may apply (ii) of Proposition \ref{zz2}.
\end{proof}


\bibliographystyle{amsplain}

\begin{thebibliography}{10}

\bibitem{Atiyah}
M.~F. Atiyah, \emph{On the Krull-Schmidt theorem with application to sheaves}, Bull. S,M.F., tome 84 (1956), 307--317. 



\bibitem{BHMP}
E.~Ballico, S.~Huh, F.~Malaspina and J.~Pons-Llopis, \emph{aCM sheaves on the double plane}, arXiv:1604.00866v2 [math.AG].


\bibitem{BGS}
R.-O. Buchweitz, G.-M. Greuel, and F.-O. Schreyer, \emph{Cohen-{M}acaulay
  modules on hypersurface singularities. {II}}, Invent. Math. \textbf{88}
  (1987), no.~1, 165--182.

\bibitem{CH}
M.~Casanellas and R.~Hartshorne, \emph{A{CM} bundles on cubic surfaces}, J. Eur. Math. Soc. (JEMS) \textbf{13} (2011), no.~3, 709--731.

\bibitem{cat}
F.~Catanese, \emph{Footnotes to a theorem of Reider}, in: Algebraic Geometry Proceedings, L'Aquila 1988 (ed. by A.~J. Sommese, A.~Biancofiore, E.~L. Livorni), 64--74, Lecture Notes in Math. 1417, Springer, Berlin, 1990.

\bibitem{CMP}
L.~Costa, R.~M. Mir{\'o}-Roig, and J.~Pons-Llopis, \emph{The
  representation type of {S}egre varieties}, Adv. Math. \textbf{230} (2012),
  no.~4-6, 1995--2013.


\bibitem{DG}
Y.~A. Drozd and G.-M. Greuel, \emph{Tame and wild projective curves and
  classification of vector bundles}, J. Algebra \textbf{246} (2001), no.~1,
  1--54.
  
\bibitem{EH}
D.~Eisenbud and J.~Herzog, \emph{The classification of homogeneous
  {C}ohen-{M}acaulay rings of finite representation type}, Math. Ann.
  \textbf{280} (1988), no.~2, 347--352.
  
\bibitem{faenzi}
D.~Faenzi, \emph{Rank 2 arithmetically {C}ohen-{M}acaulay bundles on a nonsingular
  cubic surface.} J. Algebra \textbf{319} (2008), no.~1, 143--186.  
  
\bibitem{FM}
D.~Faenzi and F.~Malaspina, \emph{Surfaces of minimal degree of tame representation type and 
mutations of Cohen-Macaulay modules}, Adv. Math. \textbf{310} (2017), 663--695. 

\bibitem{FP}
D.~Faenzi and J.~Pons-Llopis, \emph{The cm representation type of
  projective varieties}, preprint (2015), arXiv:1504.03819 [math.AG]. 

\bibitem{Hartshorne}
R.~Hartshorne, \emph{Algebraic geometry}, Springer-Verlag, New York-Heidelberg, 1977,
  Graduate Texts in Mathematics, No. 52.
  
\bibitem{H}
D.~Huybrechts, \emph{Fourier-Mukai transformations in algebraic geometry}, Oxford University Press, 2006.  

\bibitem{Kapranov}
M.~M. Kapranov, \emph{On the derived categories of coherent sheaves on some
  homogeneous spaces}, Invent. Math. \textbf{92} (1988), no.~3, 479--508.


\bibitem{Knorrer}
H.~Kn{\"o}rrer, \emph{Cohen-{M}acaulay modules on hypersurface
  singularities. {I}}, Invent. Math. \textbf{88} (1987), no.~1, 153--164.

\bibitem{madonna}
C.~Madonna, \emph{Rank-two vector bundles on general quartic hypersurfaces in $\PP^4$}, Rev. Mat. Complut. \textbf{13} (2000), no.~2, 287--301.


\bibitem{serre}
J.~P. Serre, \emph{Faisceaux Algebriques Coherents}, Ann. Math. \textbf{61} (1955), no.~2, 197--278.


\end{thebibliography}
\providecommand{\bysame}{\leavevmode\hbox to3em{\hrulefill}\thinspace}
\providecommand{\MR}{\relax\ifhmode\unskip\space\fi MR }
\providecommand{\MRhref}[2]{%
  \href{http://www.ams.org/mathscinet-getitem?mr=#1}{#2}
}
\providecommand{\href}[2]{#2}

\end{document}